\theoremstyle{proclaim}
\newtheorem{theorem}{Theorem}[section]
\newtheorem{proposition}[theorem]{Proposition}
\newtheorem{corollary}[theorem]{Corollary}
\newtheorem{lemma}[theorem]{Lemma}
\theoremstyle{statement}
\newtheorem{example}[theorem]{Example}
\newcommand{\TT}{{\mathrm{t}}}
\theoremstyle{remark}
\newtheorem{remark}[theorem]{Remark}
\numberwithin{equation}{section}
\def\bH{{\bf H}}
\def\Span{{\rm span}\,}
\def\bR{{\mathbf{R}}}
\def\bC{{\mathbf{C}}}
\def\cC{{\mathcal C}}
\def\bV{{\bf V}}
\def\bU{{\bf U}}
\def\Span{{\rm span}\,}
\def\diag{{\rm diag}\,}
\def\tr{\operatorname{trace}}
\def\dim{{\rm dim}\,}
\def\IC{{\mathbb C}}
\def\IF{{\mathbb F}}
\def\IR{{\mathbb R}}
\def\bM{{\mathbf M}}
\newcommand{\bigzero}{\mbox{\normalfont\Large $0$}}
\newcommand{\rvline}{\hspace*{-\arraycolsep}\vline\hspace*{-\arraycolsep}}
\begin{document}
\openup .535\jot
\title{Linear maps on matrices preserving parallel pairs}

\dedicatory{Dedicated to Professor  Yiu-Tung Poon on the occasion of his 70th birthday}

\author[Li, Tsai, Wang  and Wong]
{Chi-Kwong Li, Ming-Cheng Tsai, Ya-Shu Wang  \and Ngai-Ching Wong}

\address[Li]{Department of Mathematics, The College of William
\& Mary, Williamsburg, VA 13187, USA.}
\email{ckli@math.wm.edu}

\address[Tsai]{General Education Center, National Taipei University of  Technology, Taipei 10608, Taiwan.}
\email{mctsai2@mail.ntut.edu.tw}

\address[Wang]{Department of Applied Mathematics, National Chung Hsing University, Taichung 40227, Taiwan.}
\email{yashu@nchu.edu.tw}

\address[Wong]{Department of Applied Mathematics, National Sun Yat-sen University, Kaohsiung, 80424, Taiwan; Department of Healthcare
Administration and Medical Information,
% and Center of Fundamental Science,
Kaohsiung  Medical University, 80708 Kaohsiung, Taiwan.}
  \email{wong@math.nsysu.edu.tw}
\date{}

\begin{abstract}
Two (real or complex) $m\times n$ matrices $A$ and $B$ are said to be parallel
(resp.\ triangle equality attaining, or TEA in short)
with respect to the spectral norm $\|\cdot\|$ if
$\|A+ \mu B\| = \|A\| + \|B\|$ for some scalar $\mu$ with $|\mu|=1$ (resp.\ $\mu=1$). We study linear maps
$T$ on $m\times n$  matrices preserving parallel (resp.\ TEA) pairs, i.e.,
$T(A)$ and $T(B)$ are parallel (resp.\ TEA) whenever $A$ and $B$ are parallel (resp.\ TEA).

It is shown that when $m,n \ge 2$ and $(m,n) \ne (2,2)$,
a nonzero linear map $T$ preserving TEA pairs if and only if it is
a positive multiple of a linear isometry, namely, $T$  has the
form

\medskip\centerline{
(1)  \ $A \mapsto \gamma UAV$ \quad or \quad
(2) \ $A \mapsto \gamma UA^{\TT} V$ (in this case, $m = n$),}

\smallskip\noindent
for a positive number $\gamma$, and unitary (or real orthogonal) matrices
$U$ and $V$ of appropriate sizes.  Linear maps preserving
parallel pairs are those carrying form (1), (2), or the form

\medskip\centerline{
(3)  \ $A \mapsto f(A) Z$}

\medskip\noindent
 for a linear functional $f$ and a fixed matrix $Z$.

\medskip
The case when $(m,n) = (2,2)$ is more complicated.
There are  linear maps of $2\times 2$ matrices preserving parallel pairs or TEA pairs neither of the form (1), (2)  nor (3) above.
Complete characterization of such maps is given with some
intricate computation and techniques in matrix groups.
%Related results and open problems are mentioned.
\end{abstract}
%Moreover, the  structure of such maps are different for complex and real matrices.
%Further results and research directions will be mentioned.

\subjclass{15A86, 15A60.}

\keywords{Rectangular matrices, norm parallelism, parallel pair preservers}
\maketitle

% ----------------------------------------------------------------

\section{Introduction}\label{s:Intro}

Let $(\bV,\|\cdot\|)$ be a normed vector space over either the real field $\IF=\IR$ or
the complex field $\IF=\IC$.  Two vectors $x, y \in \bV$ form a  \emph{parallel} pair if
\begin{equation}\label{parallel}
\|x+\mu y\| = \|x\| + \|y\| \quad \hbox{ for some } \mu \in \IF \hbox{ with } |\mu| = 1;
\end{equation}
the vectors $x,y \in \bV$ form a \emph{triangle equality attaining} (\emph{TEA}) pairs if
\begin{equation}\label{TEA}
\|x+y\| = \|x\| + \|y\|.
\end{equation}
\begin{comment}
We are interested in those linear maps $T: \bM_{m,n}(\IF)\to \bM_{m,n}(\IF)$ of rectangular matrices over the preserving
parallel pairs, i.e.,
\begin{equation}\label{g-pp-pre}
(Tx,Ty) \hbox{ is a parallel pair }  \hbox{ whenever }  (x,y)  \hbox{ is a parallel pair,}
\end{equation}
or preserving TEA pairs, i.e.,
\begin{equation}\label{g-te-pre}
(Tx,Ty) \hbox{ is a TEA pair }  \hbox{ whenever }  (x,y) \hbox{ is a parallel pair.}
\end{equation}
\end{comment}
We are interested in the case when $\bV=
 \bM_{m,n}(\IF)$, the linear space of $m\times n$ matrices over $\IF$
equipped with the spectral norm, that is, the operator norm
$$\|A\| = \max \{ \|Ax\|: x \in \IF^n, \|x\| = 1\},$$
while $\IF^m, \IF^n$ are  equipped with the usual inner products and norms.
\begin{comment}
%We always assume that $m,n \ge 2$.
With respect to the spectral norm, two  matrices $A, B\in \bM_{m,n}(\IF)$ are said to be
\text{\emph{parallel}} \text{if}
$$
\|A + \mu B\| = \|A\|+\|B\|\ \text{for some  $\mu \in \IF$ with $|\mu| = 1$}.
$$
In particular,  $A$ and $B$ are said to be a pair of \emph{triangle equality attaining} matrices, or
\text{a \emph{TEA} pair,} \text{if}
$$
\|A + B\| = \|A\| + \|B\|.
$$
\end{comment}
One may see \cite{NT02,BB01,Seddik07,Wojcik,ZM16,Zamani} for the study of parallel pairs.

In this paper, wee study those linear maps
$T: \bM_{m,n}(\IF) \rightarrow \bM_{m,n}(\IF)$  preserving parallel pairs, that is,
\begin{equation}\label{condition-p1}
\max_{|\nu|=1} \|T(A)+ \nu T(B)\| = \|T(A)\| + \|T(B)\| \quad \hbox{whenever} \quad \max_{|\mu|=1} \|A+\mu B\| = \|A\|+\|B\|,
%T(A)  \hbox{ and } T(B) \hbox{ are parallel whenever } A \hbox{ and } B \hbox{ are parallel}.
%T(A) \parallel  T(B) \quad \hbox{whenever} \quad A \parallel B.
\end{equation}
or preserving TEA pairs, that is,
\begin{equation} \label{condition-p2}
\|T(A)+T(B)\| = \|T(A)\| + \|T(B)\| \quad \hbox{whenever } \quad \|A+B\| = \|A\|+\|B\|.
%T(A) \upuparrows  T(B) \quad \hbox{whenever} \quad A  \upuparrows  B.
\end{equation}
Evidently, if a linear map
 $T$ satisfies (\ref{condition-p2}), then it
also satisfies  (\ref{condition-p1}).
In other words, linear TEA pair preservers are also  parallel pair preservers.

Note that if $m = 1$ or $n = 1$,
the spectral norm  reduce to the $\ell_2$-norm, which is strictly convex.
Consequently, two nonzero   row matrices or
 column matrices
  $A, B$ are parallel (resp.\ form a TEA pair)
  exactly when $A=\lambda B$ for some scalar $\lambda$ (resp.\ with $\lambda >0$).
  Thus any linear map of row matrix spaces or column matrix spaces preserves parallel pairs as well as TEA pairs.
%   as pointed out   in \cite{LTWW-LP} and \cite{Ket}.
We will assume that $m,n \ge 2$ throughout in this paper.

Note that  $T$ preserves TEA/parallel pairs if $T: \bM_{m,n}(\IF) \rightarrow \bM_{m,n}(\IF)$ is  a
 scalar multiple of a linear isometry for the spectral norm.
In this case,
 $T$ has the form $A \mapsto \gamma UAV$,  or   the form $A \mapsto  \gamma UA^{\TT}V$ (only possible when $m = n$),
for some  $U\in \bU_m(\IF)$ and $V\in\bU_n(\IF)$, and $\gamma\geq 0$; see, e.g., \cite{Li}.
Here,   $\bU_k(\IF)$, or simply $\bU_k$, denotes the group of $k\times k$
complex unitary matrices or real orthogonal matrices
depending on $\IF = \IC$ or $\IR$.
Indeed,
these are merely all linear preservers of TEA/parallel pairs on $\bM_{m,n}(\IF)$.
%when $m,n\geq 2$ and $(m,n) \ne (2,2)$.
More precisely, we have the following.

\begin{theorem} \label{main}
Suppose  $m,n \ge 2$, $(m,n) \ne (2,2)$,
and $T:\bM_{m,n}(\IF)\rightarrow \bM_{m,n}(\IF)$  is a nonzero  linear map.
 The following conditions are equivalent.
\begin{enumerate}[{\rm (a)}]
\item $T$ preserves TEA pairs.
\item $T$ preserves parallel pairs  and the range space of $T$ has dimension larger than one.
\item There are $U \in \bU_m(\IF)$, $V \in \bU_n(\IF)$, and $\gamma > 0$ such that

\medskip\hspace{0pt}{
{\rm (1)} \ $T$ has the form $A \mapsto \gamma UAV$, \ \ or \ \
{\rm (2)} \ $m = n$ and $T$ has the form $A \mapsto \gamma UA^{\TT}V$.
}
\end{enumerate}
\end{theorem}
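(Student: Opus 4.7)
The implication (c)$\Rightarrow$(a) is straightforward: if $T(A) = \gamma UAV$ (or $\gamma UA^{\TT}V$) with $\gamma > 0$ and $U \in \bU_m(\IF)$, $V \in \bU_n(\IF)$, then $\|T(A)\| = \gamma\|A\|$, whence $\|T(A)+T(B)\| = \gamma\|A+B\| = \gamma(\|A\|+\|B\|) = \|T(A)\|+\|T(B)\|$ whenever $\|A+B\| = \|A\|+\|B\|$. The implication (a)$\Rightarrow$(b) is immediate on the parallel side (a TEA pair is a parallel pair, with $\mu=1$), while the range-dimension claim follows by checking that a rank-one range $T(A) = f(A)Z$ fails to preserve TEA once one exhibits TEA pairs $A,B$ with $f(A), f(B)$ of distinct phases (easy to arrange via diagonal matrices once $(m,n) \ne (2,2)$). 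The substantive implication is (b)$\Rightarrow$(c), which I plan to prove in three steps.

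\emph{Step 1 (structural lemma).} I first verify that $A,B \in \bM_{m,n}(\IF)$ form a parallel pair if and only if there is a unit vector $x \in \IF^n$ with $\|Ax\| = \|A\|$, $\|Bx\| = \|B\|$, and $Ax, Bx$ linearly dependent over $\IF$; the pair is TEA precisely when the dependence scalar is a nonnegative real. This follows by chasing equalities at a norming vector of $A + \mu B$.

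\emph{Step 2 (rank-one preservation).} The core of the argument is to show that $T$ sends rank-one matrices to rank-$\le 1$ matrices. For a rank-one $R = uv^*$ with $\|u\|=\|v\|=1$, the set of matrices parallel to $R$ is precisely $\{B : Bv = \|B\|\mu u,\ |\mu|=1\}$, a large family whose members are pairwise parallel. Under $T$, the image of this family is a set of pairwise parallel matrices all parallel to $T(R)$. If $T(R)$ had rank $\ge 2$, then the comparative rigidity of the parallelism relation around a higher-rank matrix, combined with the linearity of $T$ and the hypothesis $\dim\mathrm{range}(T) > 1$, should force a contradiction. This is the principal obstacle: parallelism is a nonlinear condition involving a maximisation over phases $\mu$, so converting it into the algebraic statement that rank one is preserved demands a careful exploitation of the abundance of parallel companions of each rank-one matrix. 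The exclusion $(m,n) \ne (2,2)$ enters essentially here, since the abstract already indicates that additional parallel-pair preservers exist in $\bM_{2,2}(\IF)$ that are not of the form (1), (2), or (3).

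\emph{Step 3 (classification and scaling).} Once $T$ preserves rank-$\le 1$ matrices and has $\dim\mathrm{range}(T) > 1$, a classical theorem of Marcus--Moyls type supplies invertible $U \in \bM_m(\IF)$ and $V \in \bM_n(\IF)$ such that either $T(A) = UAV$ for all $A$, or $m = n$ and $T(A) = UA^{\TT}V$ for all $A$. Testing the parallel-pair hypothesis on well-chosen pairs (for instance, rank-one matrices $xy^*$ together with suitable higher-rank perturbations sharing a common norming direction) then forces $\|Ux\|$ to be proportional to $\|x\|$ on $\IF^m$ and $\|V^*y\|$ to be proportional to $\|y\|$ on $\IF^n$; consequently $U = \sqrt{\gamma_1}\,U'$ and $V = \sqrt{\gamma_2}\,V'$ for some $U' \in \bU_m(\IF)$, $V' \in \bU_n(\IF)$, and setting $\gamma = \sqrt{\gamma_1\gamma_2}$ yields the desired form in (c).
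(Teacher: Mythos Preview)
Your Step~2 has a genuine gap. You correctly observe that for a rank-one $R=uv^*$ the set $\mathcal{P}(R)$ of matrices parallel to $R$ is pairwise parallel, and hence so is $T(\mathcal{P}(R))$. But this does \emph{not} force $T(R)$ to have rank $\le 1$: for \emph{any} matrix $A$ with $s_1(A)>s_2(A)$, regardless of its rank, one has $\mathcal{P}(A)=\{U([g]\oplus G)V^*:|g|\ge\|G\|\}$ for suitable unitaries $U,V$, and this set is again pairwise parallel. So the ``comparative rigidity'' you invoke does not exist in the form you need: a rank-$2$ matrix with a simple top singular value has exactly the same pairwise-parallel structure in $\mathcal{P}(\cdot)$ as a rank-one matrix. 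The hypothesis $\dim\operatorname{range}(T)>1$ does not help here either, since $T(\mathcal{P}(R))$ sitting inside a pairwise-parallel cone of dimension $(m-1)(n-1)+1$ is perfectly consistent with $T(R)$ having higher rank.

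A second, related gap is that you never establish that $T$ is invertible. This is a substantial step in its own right (the paper devotes Proposition~\ref{prop:tea-para-not-inv} and all of Subsection~\ref{SS:invertibility} to it), and without it your Step~3 is also in trouble: the Marcus--Moyls classification of rank-one preservers in the clean form $A\mapsto UAV$ or $UA^{\TT}V$ requires bijectivity. The paper's route to (b)$\Rightarrow$(c) is quite different from yours: after proving invertibility, it works with $T^{-1}$ rather than $T$, uses the dimension $(m-1)(n-1)+1$ of $\Span\mathcal{P}(A)$ to show that $s_1(T(A))>s_2(T(A))$ forces $s_1(A)>s_2(A)$, then shows via a delicate $\ell_\infty$-type argument (Lemma~\ref{D-matrix}) that $T^{-1}$ sends \emph{disjoint} rank-one matrices to disjoint matrices, and finally invokes a disjointness-preserver theorem rather than Marcus--Moyls.
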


If the range space of a linear map $T: \bM_{m,n}(\IF) \rightarrow \bM_{m,n}(\IF)$ has dimension
at most $1$, or equivalently,
$T$ has the form

\medskip\hspace{20pt}{
{\rm (3)} \
$A \mapsto \tr(FA) Z$ \quad for  some  $F\in \bM_{n,m}(\IF)$ and $Z\in \bM_{m,n}(\IF)$,
}

\noindent
then clearly $T$ preserves parallel pairs (but not necessarily TEA pairs).

The proof of Theorem \ref{main} will be presented in
Section  \ref{S:main-proof} after some preliminary results are
given in Section \ref{S:pre}.
% Theorem \ref{main} will be shown in Section \ref{S:main-proof}.
A key step of the proof is to show that
any nonzero linear TEA pair preserver, and any linear parallel pair preserver with range space
of dimension larger than one,  is invertible (see Proposition \ref{prop:tea-para-not-inv}).
Then one can focus on invertible preservers.

In the study of linear preserver problems
on matrices, the $2\times 2$ case often involves special maps and the proofs require special care.
One reason is that there are many
special subgroups and semi-groups of the linear maps on $\bM_2(\IF)$.
In our study, we  experienced the same problem.  Nevertheless, with intricate computation
and insights on some such special subgroups, we are able to give a complete
description of linear maps $T: \bM_2(\IF) \rightarrow \bM_2(\IF)$ preserving TEA/parallel pairs.
It turns out that the structure are different for the two classes of preservers,
and they are also different in the real and complex cases.
%Let $\bH_2$  be the real linear space of all $2\times 2$ Hermitian matrices.
%, and  $\bH_2^0$ its subspace consisting of Hermitian matrices  with zero trace.
To describe our main result, let
$$
I_2 = \begin{pmatrix} 1 & 0 \cr 0 & 1\cr\end{pmatrix}, \
\mathcal{C}_1 = \mathcal{R}_1 = \begin{pmatrix} 1 & 0 \cr 0 & -1\cr\end{pmatrix}, \
\mathcal{C}_2 = \mathcal{R}_2 = \begin{pmatrix} 0 & 1 \cr 1 & 0\cr\end{pmatrix}, \
\mathcal{C}_3 = \begin{pmatrix} 0 & -i \cr i & 0 \cr\end{pmatrix}, \
\mathcal{R}_3 = \begin{pmatrix} 0 & 1 \cr -1 & 0 \cr\end{pmatrix}.
$$
It is well-known that  $\bC = \{I_2/\sqrt 2, \mathcal{C}_1/\sqrt 2, \mathcal{C}_2/\sqrt 2, \mathcal{C}_3/\sqrt 2\}$
is an orthonormal basis of $\bM_2(\IC)$
% while $\c\mathcal{C}_0 = \{ \mathcal{C}_1/\sqrt 2, \mathcal{C}_2/\sqrt 2, \mathcal{C}_3/\sqrt 2\}$ is an orthonormal basis of $\bH_2^0$,
 with respect to the inner product
$\langle A,B\rangle=\operatorname{trace}(B^*A)$, while
 $\bR=\{I_2/\sqrt 2, \mathcal{R}_1/\sqrt 2, \mathcal{R}_2/\sqrt 2, \mathcal{R}_3/\sqrt 2\}$
is an orthonormal basis of $\bM_2(\IR)$  with respect to the inner product $\langle A, B\rangle = \operatorname{trace}(B^{\TT} A)$.
%Let $\IF=\IC$ or $\IR$.

\begin{theorem}\label{thm-parallel-C}
Let $T: \bM_2(\IC) \rightarrow \bM_2(\IC)$ be a  complex linear map.
\begin{itemize}
\item[(a)]$T$ preserves TEA pairs if and only if there are $\gamma \geq 0$ and
$U, V \in \bU_2(\IC)$ such that $T$ has the form
$$A \mapsto \gamma UAV \qquad \hbox{ or } \qquad  A\mapsto \gamma UA^{\TT}V.$$
\item[(b)] $T$ preserves parallel pairs if and only if one of the following holds.
\begin{itemize}
\item[(b.1)] $T$ has the form $A \mapsto \tr(FA) Z$ for  some  $F, Z\in \bM_2(\IC)$.
\item[(b.2)] There are complex unitary matrices
 $U_1, U_2, V_1, V_2 \in \bU_2(\IC)$ such that the matrix representation of the linear transformation
  $\tilde{T}$ defined by    $A\mapsto U_1 T(U_2AV_2)V_1$   with respect
to the basis $\bC$ has the form
\begin{equation}\label{eq:(b.2)}
\ \hskip .7 in [\tilde{T}]_{\bC} =\begin{pmatrix}
d_0 & \gamma_1 i & \gamma_2 i & \gamma_3 i \\
0 & d_1 & 0 & 0 \cr
0 & 0 & d_2 & 0 \cr
0 & 0 & 0 & d_3\cr
\end{pmatrix}
\end{equation}
for  real scalars
$d_0, d_1, d_2, d_3, \gamma_1, \gamma_2, \gamma_3$.
\end{itemize}
\end{itemize}
\end{theorem}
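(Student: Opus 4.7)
The proof splits into part (a) on TEA preservers and part (b) on parallel preservers, with (b.2) being the main novelty. I would begin by invoking Proposition~\ref{prop:tea-para-not-inv}, which reduces both parts to the case of invertible $T$ whenever the range of $T$ has dimension larger than one. When $\dim \mathrm{range}(T) \le 1$, part (b.1) is immediate: any such $T$ has the form $A \mapsto \tr(FA) Z$, and a direct check confirms that parallel pairs are preserved.

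For part (a), the plan is to show that any nonzero TEA preserver maps rank-one matrices to scalar multiples of rank-one matrices, and then to invoke the Marcus--Moyls/Frobenius-type classification of rank-one preservers to conclude the stated form. The rank-one recognition step uses that a rank-one $A=uv^*$ of norm one admits an especially rich TEA family, namely the $B$'s with $Bv$ a non-negative real multiple of $u$ and $\|Bv\|=\|B\|$. Invertibility of $T$ transports this geometric structure to the image and constrains $T(A)$ to again have rank one. The final positive scalar $\gamma$ is extracted by testing on $T(I_2)$.

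For part (b) in the invertible case, the crucial observation is that complex linearity of $T$ converts the parallel relation ``$(A,B)$ is parallel with witness $\mu$'' into the TEA relation ``$(A,\mu B)$ is TEA'', while $T(\mu B)=\mu T(B)$ retains $\mu$ as a candidate witness for $(T(A),T(B))$. Thus parallel preservation is equivalent to preserving the existence of some unit witness, with the witness allowed to shift, and this extra phase freedom is precisely what generates the $i\gamma_j$ entries in form (b.2). To exploit this, I would first normalize by composing on both sides with isometries of the form $A\mapsto U_iAV_i$ (absorbed into $U_1,U_2,V_1,V_2$) so that $T(I_2)$ becomes a real multiple of $I_2$. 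Then I would test $T$ on carefully chosen pairs built from $\{I_2,\mathcal{C}_1,\mathcal{C}_2,\mathcal{C}_3\}$, such as $(I_2,\mathcal{C}_j)$, $(\mathcal{C}_j,\mathcal{C}_k)$ for $j\ne k$, and one-parameter perturbations $(\mathcal{C}_j, I_2+t\mathcal{C}_k)$, whose parallelism is explicitly computable from spectra of $2\times 2$ normal matrices. Matching the parallel relations of these pairs with those of their images forces $T(\mathcal{C}_j) \in \mathrm{span}\{I_2,\mathcal{C}_j\}$, with the $\mathcal{C}_j$-coefficient $d_j$ real (since $\mathcal{C}_j$ is Hermitian and parallel pairs among Hermitian $2\times 2$ matrices are rigid) and the $I_2$-coefficient purely imaginary, $i\gamma_j$, since a real component would promote an already-TEA pair into an asymmetric configuration and break some auxiliary parallel pair. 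A direct computation closes the converse in form (b.2).

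The principal obstacle is this last step: proving that $T(\mathcal{C}_j)$ is confined to $\mathrm{span}\{I_2,\mathcal{C}_j\}$ and that its component along $I_2$ is purely imaginary. Generic preserver techniques (rank one, extreme points, orbit counting) are insufficient here because the exceptional form (b.2) is compatible with all of them; instead one must exploit the specific algebraic interplay between the parallel relation and the Pauli anticommutation structure of $\bM_2(\IC)$, via an explicit catalog of parallel pairs that in the regime $(m,n)\ne(2,2)$ would collapse to the standard isometry form of Theorem~\ref{main}, but in the $2\times 2$ setting admits the additional real parameters $\gamma_j$.
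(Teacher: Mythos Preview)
Your opening move for part (b) is a genuine gap: Proposition~\ref{prop:tea-para-not-inv}(b) explicitly excludes $(m,n)=(2,2)$, and the conclusion is actually false in $\bM_2(\IC)$. For instance, the map with $[\tilde T]_{\bC}=\diag(1,1,0,0)$ (i.e.\ $d_0=d_1=1$, $d_2=d_3=\gamma_1=\gamma_2=\gamma_3=0$) is a legitimate (b.2) parallel preserver whose range is the two-dimensional space of diagonal matrices, yet it is not invertible. So you cannot reduce part (b) to the invertible case. The paper avoids this entirely: it normalizes so that $T(I)=\diag(1,a)$ with $0\le a\le 1$, and runs a dichotomy. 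If some unitary $U$ has $T(U)$ with distinct singular values (equivalently, after re-normalizing, $a<1$), Lemma~\ref{2X2}(d) forces the whole range into $\IF\cdot T(I)$, giving form (b.1). Otherwise \emph{every} $T(U)$ is a scalar multiple of a unitary; applying this to $rI_2+i\mathcal{C}_j$ for all $r\in\IR$ together with Lemma~\ref{2X2}(b) yields $T(\mathcal{C}_j)=i\gamma_j I_2+H_j$ with $H_j\in\bH_2^0$, and a short argument rules out the mixed case where some $T(\mathcal{C}_j)$ is a non-purely-imaginary multiple of $I_2$. Note that at this stage the paper only obtains $H_j\in\bH_2^0$, not $H_j=d_j\mathcal{C}_j$ as you claim; the reduction to diagonal form is a separate step (Proposition~\ref{main3}) using the identification $\bU_2(\IC)/\bU_1(\IC)\cong SO(3)$ and the singular value decomposition of the resulting real $3\times3$ block.

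For part (a), your rank-one preserver route is different from the paper and the sketch is too thin to be convincing: the phrase ``invertibility of $T$ transports this geometric structure and constrains $T(A)$ to again have rank one'' hides the entire difficulty, since in $\bM_2(\IC)$ the parallel/TEA families of rank-one and rank-two matrices are not distinguished by dimension counts as they are for larger sizes. The paper instead proves (b) first and then specializes: an invertible TEA preserver must have the (b.2) form, and testing the TEA pairs $(I_2,\mathcal{C}_j)$ forces $\gamma_j=0$, while testing $(2E_{11},\,2E_{11}+2iE_{22})$ and conjugating by explicit unitaries forces each $d_j=\pm1$, after which one composes with inner automorphisms and the transpose to reduce to the identity. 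Finally, the sufficiency of (b.2) is not a ``direct computation'': the paper needs Lemma~\ref{scheme} and a careful verification (Lemma~\ref{lem:2x2-suff}) that $G_1=2T(UE_{11}V^*)$ and $G_2=2T(UE_{22}V^*)$ satisfy $G_1G_2^*=\zeta I_2$ for every $U,V\in\bU_2(\IC)$.
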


\begin{theorem}\label{thm-parallel-R}
Let $T: \bM_2(\IR) \rightarrow \bM_2(\IR)$ be a   real  linear map.
\begin{itemize}
\item[(a)] $T$ preserves TEA pairs if and only if $T$  has the form
\begin{itemize}
\item[(a.1)] There are real orthogonal matrices $F_1, F_2 \in \bU_2(\IR)$ with $F_1^{\,\TT}F_2 = \begin{pmatrix} 0 & 1\\ -1 & 0\end{pmatrix}$
and  $Z_1, Z_2\in \bM_2(\IR)$  such that  $T$
has the form
$$X \mapsto \tr(F_1X)Z_1 + \tr(F_2 X)Z_2.$$
%$T$ has the form $A \mapsto \tr (F_1A) Z_1 + \tr(F_2A) Z_2$, where $F_1, F_2 \in \bU_2$ such that $F_1^*F_2 = \mathcal{R}_3=E_{12}-E_{21}$. % and $Z_1, Z_2$ are linearly independent.
\item[(a.2)] There are real orthogonal matrices $U_1, U_2, V_1, V_2 \in \bU_2(\IR)$
such that the matrix representation of the linear transformation
  $\tilde{T}$ defined by  $A\mapsto U_1 T(U_2AV_2)V_1$ with respect to the basis $\bR$ has the diagonal form
\begin{align}\label{eq:(a.2)}
[\tilde T]_\bR = \begin{pmatrix} d_0 & & &\\ & d_1 & &\\ & & d_2 &\\ & & & d_3\end{pmatrix}\in \bM_4(\IR).
\end{align}
% with respect to the basis $\bR$ is a real diagonal matrix.
\end{itemize}
%\item[(a.3)]  $T$ has the form (b.3), where $d_1\neq 0$ and $d_0=d_{12}=d_{21}=0$.
%\begin{itemize}
%\item[{\rm (a.1)}] There are nonzero $F_1\in \bU_2$ and  $Z \in \bM_2$ such that $T$ has the form $A \mapsto \tr(F_1 A)Z$.
%\end{itemize}
\item[(b)] $T$ preserves parallel pairs if and only if $T$   has the form in (a.1), (a.2), or
$$
%T(A) =\tr(FA)Z
A\mapsto \tr(FA)Z \quad\text{for some  $F, Z \in \bM_2(\IR)$.}
$$
\end{itemize}
\end{theorem}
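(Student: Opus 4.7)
The plan rests on a decomposition of $\bM_2(\IR)$ that is special to the $2\times 2$ case. With the notation of the theorem, one can verify directly from $\sigma_1^2 + \sigma_2^2 = \|A\|_F^2$ and $\sigma_1 \sigma_2 = |\det A|$ that for $A = d_0 I_2 + d_1\mathcal{R}_1 + d_2 \mathcal{R}_2 + d_3 \mathcal{R}_3$,
$$
\|A\| = \sqrt{d_0^2 + d_3^2} + \sqrt{d_1^2 + d_2^2}.
$$
So if $\bH_1 = \Span\{I_2,\mathcal{R}_3\}$ and $\bH_2 = \Span\{\mathcal{R}_1,\mathcal{R}_2\}$, then $\bM_2(\IR) = \bH_1\oplus\bH_2$ Frobenius-orthogonally, each $\bH_i$ is isometric to Euclidean $\IR^2$, and $\|A\| = \|A_1\| + \|A_2\|$ in the induced norms. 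Writing $A = A_1 + A_2,\ B = B_1 + B_2$, a clean characterization follows: $(A, B)$ is TEA iff $(A_i, B_i)$ are non-negative scalar multiples (zero allowed) in each $\bH_i$, while $(A, B)$ is parallel iff there exists $\mu \in \{\pm 1\}$ with $(A_i, \mu B_i)$ non-negative scalar multiples for both $i = 1, 2$.

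With these characterizations in hand, the ``if'' direction dispatches quickly. Forms in (a.1) factor through a single $\bH_i$: the requirement $F_1^{\,\TT}F_2 = \mathcal{R}_3$, combined with the observation that right multiplication by $\mathcal{R}_3$ preserves both $\bH_1$ and $\bH_2$, forces $F_1, F_2$ to lie in a common $\bH_i$, so $T(A)$ depends only on $A_i$ and the one-factor scalar-multiple structure is inherited. Form (a.2) is block-diagonal along $\bH_1\oplus\bH_2$, and a linear map on $\IR^2$ preserves positive (and signed) scalar multiples, hence both TEA and parallel relations. Finally, $A\mapsto \tr(FA)Z$ has one-dimensional range, in which every pair is parallel; it is typically not TEA, explaining the absence of this form from part (a).

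For necessity, write $T$ as a block operator $T_{ij}\colon \bH_j\to\bH_i$ relative to the decomposition. For TEA preservers, the pair $(A_1, A_2)$ with $A_1\in\bH_1, A_2\in\bH_2$ is always TEA, so $T_{11}(A_1)$ and $T_{12}(A_2)$ must lie on a common positive ray in $\bH_1\cong\IR^2$ for all $A_1, A_2$, and similarly in $\bH_2$. Since a nonzero linear map on $\IR^2$ sweeps out both sides of its image line, the ray condition collapses to $T_{11}T_{12} = 0$ and $T_{21}T_{22} = 0$, leaving four cases: (D) $T$ preserves $\bH_1\oplus\bH_2$; (A) $T$ swaps $\bH_1\leftrightarrow\bH_2$; or (B), (C) $T$ factors through a single $\bH_i$. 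Cases (B), (C) are already of form (a.1). Case (A) reduces to Case (D) upon composition with the isometry $A\mapsto A\mathcal{R}_2$, which interchanges $\bH_1$ and $\bH_2$. In Case (D) the conjugations $A\mapsto R(\theta) A R(\phi)$ realize arbitrary $SO(2)\times SO(2)$ action on $\bH_1\oplus\bH_2$, and supplementing with $A\mapsto A^{\TT}$ and $A\mapsto \mathcal{R}_1 A\mathcal{R}_1$ extends this to the full $O(2)\times O(2)$ action, permitting simultaneous SVD of $T|_{\bH_1}$ and $T|_{\bH_2}$ to produce the diagonal form (a.2).

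The main obstacle is the parallel case with $\rank T\geq 2$, where the weaker alignment condition admits an additional possibility: all four blocks $T_{ij}$ may be nonzero rank-one with images on common lines $\ell_i\subset\bH_i$. To show this ``subcase V'' contributes nothing new, I will write $T(A) = \phi_1(A)z_1 + \phi_2(A)z_2$ for $z_i\in\ell_i$ and test parallel pairs $(A, B)$ with $B_i = s_i A_i$ for same-signed scalars $s_1, s_2$ of different magnitude. The resulting sign-consistency inequality $\phi_1(A)\phi_2(A)\phi_1(B)\phi_2(B)\geq 0$ forces the quadratic form $\psi = \phi_1\phi_2$ to avoid sign changes under independent rescaling of the $\bH_1$- and $\bH_2$-components; a short argument then shows that $\phi_1, \phi_2$ must both factor through a common $\bH_j$, returning the map to Cases (B), (C), or (D). The case $\rank T\leq 1$ gives form (3) trivially. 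This sign-consistency analysis---together with the explicit verification that the isometry group of $\bM_2(\IR)$ acts as $O(2)\times O(2)$ on $\bH_1\oplus\bH_2$, with correct accounting for the determinant twists across the two factors---forms the most delicate part of the argument.
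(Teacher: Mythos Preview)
Your approach is genuinely different from the paper's and, in several respects, more transparent. The key observation---that the spectral norm on $\bM_2(\IR)$ satisfies $\|A\| = \sqrt{d_0^2+d_3^2} + \sqrt{d_1^2+d_2^2}$, making $\bM_2(\IR)$ isometric to an $\ell_1$-sum $\bH_1 \oplus_1 \bH_2$ of two Euclidean planes---immediately yields the clean TEA/parallel characterizations you state, and the sufficiency direction becomes almost automatic. The paper instead normalizes $T(I) = \diag(1,a)$, analyzes $\mathcal{P}(\diag(1,a))$ via Lemma~\ref{2X2}, and reduces to the intermediate form~\eqref{S-matrix} before invoking Proposition~\ref{main3} to reach the diagonal form; its sufficiency for (a.2) goes through the explicit $G_1, G_2$ computation of Lemmas~\ref{scheme} and~\ref{lem:2x2-suff}. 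Your route bypasses all of this machinery.

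Two points deserve care. First, in Case~(D) you invoke the transpose map to extend to the full $O(2)\times O(2)$ action, but $A\mapsto A^{\TT}$ is \emph{not} of the form $A\mapsto UAV$ and hence is not available in the conjugation defining~\eqref{eq:(a.2)}. Fortunately you do not need it: each $2\times 2$ block admits a singular value decomposition with \emph{rotations} on both sides (at the cost of a possibly negative diagonal entry, which~\eqref{eq:(a.2)} permits), and independent rotations on $\bH_1$ and $\bH_2$ are already realized by $A\mapsto R(\theta)AR(\phi)$ via the angles $\theta\pm\phi$. Second, your parallel-necessity case split is incomplete as written: beyond (A)--(D) and your ``subcase~V'' (all four $T_{ij}$ nonzero rank-one on common lines), there are genuinely mixed cases such as $T_{12}=0$ with $T_{11}$ of rank~$2$ and $T_{21},T_{22}$ both rank-one on a common line $\ell_2\subset\bH_2$. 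These must be excluded separately---for instance by testing the TEA pair $(A,B)$ with $A_1=B_1$ chosen so that $T_{11}(A_1)\neq 0$, which forces $\mu=1$ and then yields a sign contradiction in the $\bH_2$-component. Your ``short argument'' for subcase~V itself is only sketched: the constraint $\psi(A)\psi(B)\geq 0$ under rescalings $B_i = s_iA_i$ does force the claimed factorization, but one must explicitly rule out configurations such as $(\phi_{11},\phi_{21})$ of rank~$2$ with $(\phi_{12},\phi_{22})$ of rank~$1$. Once these gaps are filled, your argument is complete and considerably shorter than the paper's.
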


\iffalse
The cases when $m=1$ or $n=1$ are discussed in Section \ref{S:lp}; moreover,
 these cases are related to the parallel pairs and TEA pairs of vectors in $\IF^n$ with respect to the $\ell_2$-norm.
 More generally, we  will consider parallel pairs and TEA pairs of vectors in $\IF^n$ with respect to the $\ell_p$-norms, and
determine the structure of their linear preservers.
\fi

In Section \ref{S:2x2}, we will present the proofs of Theorems \ref{thm-parallel-C} and \ref{thm-parallel-R}
for the $2\times 2$ case.
The proofs are technical and lengthy. While the techniques and insights may be useful for future study, it would be nice
to have some shorter proofs for the results.
Related results and open problems are mentioned in Section \ref{S:future}.

\section{Preliminaries}\label{S:pre}

Let $m,n \geq 2$, and let $\IF=\IR$ or $\IC$.
For notation simplicity,  let  $\bM_{m,n}$, $\bM_n$ and $\bU_n$ denote
$\bM_{m,n}(\IF)$, $\bM_n(\IF)$ and $\bU_n(\IF)$, respectively, if the discussion applies to
both the real and complex cases. Denote by
$A^{\TT}$ and $A^*$ the transpose and the conjugate transpose of a matrix $A$.
We have $A^* = A^{\TT}$ if $\IF = \IR$.  Let   $\{E_{11}, E_{12}, \dots, E_{mn}\}$ denote the standard basis
for $\bM_{m,n}$, i.e.,  $E_{ij} \in \bM_{m,n}$ has the $(i,j)$ entry equal to 1 and all other entries equal to 0.
Let $I_n \in \bM_n$ be the identity matrix, and  $\mathbf{0}_{r,s}\in \bM_{r,s}$ be the zero matrix.
We simply write $I$ and  $\mathbf{0}$ if the size is clear in the context.
We   write $R\oplus S$ for the
$(r_1 + s_1)\times (r_2 + s_2)$
block matrix $\begin{pmatrix} R & 0\\ 0 & S\end{pmatrix}$ for   $R\in \bM_{r_1\times r_2}$
and $S\in \bM_{s_1\times s_2}$.
Two matrices  $A, B\in \bM_{m,n}$ are
said to be disjoint (a.k.a,\ orthogonal) if  $A^*B=\mathbf{0}\in \bM_n$ and $AB^*=\mathbf{0}\in \bM_m$. In some cases, we also write
$A \oplus B$ for the sum of  disjoint  matrices $A$ and $B$. 

Recall that every
rectangular matrix $A\in \bM_{m,n}$ admits a singular value decomposition, namely,
\begin{equation}\label{eq:SVD}
A =  U\begin{pmatrix} D & 0 \cr 0 & 0\cr\end{pmatrix} V^*= \sum_{j=1}^k s_j u_j v_j^*,
\iffalse
       s_1 &  &  &  &   &\\
        & s_2 &   &   &  & \\
         &   & \ddots &   &  & \\
         &   &   & s_k &  &\\
         &   &   &  & & \ddots&
     \end{pmatrix} V^* =
     U(\sum_{k=1}^{\min\{m,n\}} s_k E_{kk})V= \sum_{k=1}^{\min\{m,n\}} s_k u_k v_k^*,
\fi
\end{equation}
where $D = \diag(s_1, \dots, s_k)$ with $s_1 \ge \cdots \ge s_k > 0$, the nonzero singular values
of $A$,  and
$u_1, \dots, u_k$ and $v_1, \dots, v_k$ are the first $k$ columns of the matrices $U \in \bU_m$ and
$V \in \bU_n$.
Note that $k \le \min\{m,n\}$ and $s_1^2, \dots, s_k^2$ are the nonzero eigenvalues of $AA^*$ (or $A^*A$).
We will let $s_j(A) = s_j$ for $j = 1, \dots, k$,
and $s_j(A) = 0$ for $j > k$.
\iffalse
$$
U= \begin{pmatrix}
            u_1 & u_2 & \cdots & u_m
          \end{pmatrix}\in \bU_m, \quad
          V = \begin{pmatrix}
            v_1 & v_2 & \cdots & v_n
          \end{pmatrix} \in \bU_n,
$$
 and the singular values $s_1=\|A\| \geq s_2 \geq \cdots \geq s_k \geq \cdots \geq s_{\min\{m,n\}} \geq 0$.
We write $s_k(A) = s_k$ for the $k$th singular value of $A$ in (not necessarily strictly) descending order.
\fi

It is easy to see that two nonzero rectangular matrices $A, B\in \bM_{m,n}$ are parallel if and only if there is
a unit vector $v\in \IF^n$ and $\mu\in \IF$ with $|\mu| = 1$ such that
$$
\|A + \mu B\| = \|Av + \mu Bv\| = \|A\| + \|B\| \geq \|Av\| + \|\mu Bv\|\geq \|Av + \mu Bv\|.
$$
This is equivalent to the condition
$$
|\langle Av, Bv\rangle| = \|Av\|\|Bv\|=\|A\|\|B\| > 0.
$$
Then the Cauchy-Schwartz inequality theorem tells us that
$Av, Bv$ are linearly dependent.  Let $u= Av/\|Av\|$.  Then $A = s_1(A) uv^* \oplus A_1$ and
$B = \overline{\mu} s_1(B) uv^* \oplus B_1$ for some unimodular scalar $\mu$ and matrices $A_1, B_1$
with $\|A_1\|\leq \|A\|=s_1(A)$ and $\|B_1\|\leq \|B\|=s_1(B)$.
This leads to the following lemma; see, e.g., \cite[Theorem 3.1 and Corollary 3.2]{Li0}.

\begin{lemma}\label{lem:paralle-cond}
Let $A, B\in \bM_{m,n}$ be nonzero rectangular matrices.
\begin{enumerate}[{\rm (a)}]
  \item $A, B$ are parallel if and only if there is a unit vector $v\in \IF^n$ such that
  $$
  |\langle Av, Bv\rangle| = \|A\|\|B\|.
  $$
  In this case, $A,B$ form a TEA pair  when $\langle Av, Bv\rangle = \|A\|\|B\|$.

    \item $A, B$ are parallel if and only if there are unitary matrices $U\in \bU_m$, $V\in \bV_n$ such that
  $$
  A = U\begin{pmatrix} a & 0\\ 0  &  A_1\end{pmatrix}V \quad\text{and}\quad
  B =  U\begin{pmatrix} b & 0\\ 0  &  B_1\end{pmatrix}V
  $$
  for some   $A_1, B_1\in \bM_{m-1, n-1}$ with $\|A_1\|\leq a$
   and $\|B_1\|\leq |b|$.  In this case, $A, B$ form a TEA pair  when $b>0$.

  \item If $A, B$ are parallel (resp.\ TEA), then so are $\alpha A, \beta B$ for any scalars $\alpha, \beta$ (resp.\ such that $\overline{\alpha}\beta \geq 0$).

  \item $A=U\begin{pmatrix} A_1 & 0\\0 & A_2\end{pmatrix}V$ and $B=U\begin{pmatrix} B_1& 0 \\ 0& B_2\end{pmatrix}V$ 
  are not parallel if  $U\in \bU_m$, $V\in \bV_n$, 
  $A_1, B_1\in \bM_{r,s}$ and $A_2, B_2\in \bM_{m-r,n-s}$   with 
  $\|A_2\|< \|A_1\|$ and $\|B_1\|<\|B_2\|$.
\end{enumerate}
\end{lemma}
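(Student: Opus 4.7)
The plan is to establish (a)--(d) in order, with (a) serving as the workhorse on which the rest rests. For part (a), I would attain the norm $\|A+\mu B\|$ at a unit vector $v\in\IF^n$ and trace through
$$\|A\|+\|B\| = \|(A+\mu B)v\| \le \|Av\|+\|\mu Bv\| \le \|A\|+\|B\|.$$
Equality throughout forces $\|Av\|=\|A\|$, $\|Bv\|=\|B\|$, and equality in the Hilbert-space triangle inequality in $\IF^m$, which in turn forces $\mu Bv=\lambda Av$ for some $\lambda\ge 0$; hence $|\langle Av,Bv\rangle| = \lambda\|Av\|^2 = \|A\|\|B\|$, and the TEA case ($\mu=1$) makes the inner product itself nonnegative. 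Conversely, $|\langle Av,Bv\rangle|=\|A\|\|B\|$ pins down $\|Av\|=\|A\|$, $\|Bv\|=\|B\|$ and $Bv=cAv$ via Cauchy--Schwarz; choosing $\mu$ with $\mu c\ge 0$ yields $\|A+\mu B\|\ge\|(A+\mu B)v\|=\|A\|+\|B\|$, while the TEA sign condition forces $c\ge 0$ so that $\mu=1$ works.

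For part (b), apply (a) to get a unit $v$ with $\|Av\|=\|A\|$ and $\|Bv\|=\|B\|$, and $Bv=\bar\mu\lambda Av$ as above. Since $v$ maximizes $\|Av\|$ over unit vectors, it is a top right singular vector of $A$, so $A^*Av=\|A\|^2 v$. Setting $u=Av/\|A\|$ and extending $\{v\}$, $\{u\}$ to orthonormal bases of $\IF^n$, $\IF^m$, the identity $A^*u=A^*Av/\|A\|=\|A\|\,v$ guarantees that both the first column \emph{and} the first row of $A$ in these bases vanish off the top-left entry, giving $A=U(a\oplus A_1)V$ with $a=\|A\|$ and $\|A_1\|\le a$. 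The same bases work for $B$ because $v$ also maximizes $\|Bv\|$, yielding $B=U(b\oplus B_1)V$ with $b=\bar\mu\|B\|$ and $\|B_1\|\le|b|$. The converse direction is immediate from (a) with $v=V^*e_1$, and the TEA case corresponds to $\mu=1$, i.e., $b>0$.

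Part (c) is then bookkeeping on top of (b): scaling $A,B$ by nonzero $\alpha,\beta$ rescales the diagonal blocks without disturbing the structure, and unimodular phases may be absorbed into $U,V$ so the top-left entries of $\alpha A$ and $\beta B$ remain nonnegative; the TEA refinement needs the signs of the scaled top-left entries to match, which translates exactly to $\bar\alpha\beta\ge 0$. For part (d), suppose for contradiction that $A,B$ were parallel. By (a) a unit $v=V^*w$ would satisfy $\|Av\|=\|A\|=\|A_1\|$ and $\|Bv\|=\|B\|=\|B_2\|$. Decomposing $w=(w_1,w_2)$ conformally, the estimate
$$\|Av\|^2 = \|A_1 w_1\|^2 + \|A_2 w_2\|^2 \le \|A_1\|^2\|w_1\|^2 + \|A_2\|^2\|w_2\|^2,$$
combined with $\|A_2\|<\|A_1\|$ and $\|w_1\|^2+\|w_2\|^2=1$, forces $w_2=0$; the symmetric argument using $\|B_1\|<\|B_2\|$ forces $w_1=0$, contradicting $\|w\|=1$.

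The principal obstacle is the simultaneous block-diagonalization in part (b): the single unit vector $v$ produced by (a) must serve as a top right singular vector of \emph{both} $A$ and $B$, and one must exploit the identity $A^*Av=\|A\|^2 v$ (and its analogue for $B$) to see that the first row of each matrix, and not merely the first column, vanishes off the top-left in the chosen bases. Once this structural observation is in place, the remaining parts follow with comparatively routine computation.
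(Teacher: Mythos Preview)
Your proposal is correct and follows essentially the same route as the paper: the paragraph preceding the lemma sketches exactly your chain of inequalities for (a), the Cauchy--Schwarz reduction to $Av$, $Bv$ linearly dependent, and the resulting decomposition $A=s_1(A)uv^*\oplus A_1$, $B=\bar\mu s_1(B)uv^*\oplus B_1$, with the remaining details referred to \cite{Li0}. Your write-up is in fact more explicit than the paper's, particularly in isolating the point that $A^*Av=\|A\|^2v$ (and likewise for $B$) is what makes both the first row and first column vanish off the top-left entry in the chosen bases.
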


It follows from Lemma \ref{lem:paralle-cond}(d) that two nonzero rectangular matrices
$A, B$ can not be simultaneously  parallel and orthogonal.

\begin{lemma}\label{lem:limit}
\begin{enumerate}[{\rm (a)}]
  \item Let $T_k: \bM_{m,n}\to \bM_{m,n}$ be a sequence of linear maps preserving parallel pairs (resp.\ TEA pairs) for $k\geq 1$.
  If $T_k \to T$ in norm then $T$ preserves parallel pairs (resp.\ TEA pairs).
  \item If $(A_k, B_k)$ are parallel pairs (resp.\ TEA pairs)
  and $A_k\to A$, $B_k\to B$ in norm, then $(A, B)$ is a parallel pair (resp.\ TEA pair).
\end{enumerate}
\end{lemma}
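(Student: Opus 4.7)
The plan is to prove part (b) first, since part (a) will follow immediately by applying (b) to the sequence $(T_k(A), T_k(B))$, using continuity of each $T_k$ (convergence $T_k \to T$ in norm implies $T_k(X) \to T(X)$ for every fixed $X \in \bM_{m,n}$).

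For part (b) in the TEA case, the argument is straightforward: from $\|A_k + B_k\| = \|A_k\| + \|B_k\|$ for all $k$ and continuity of the spectral norm, passing to the limit yields $\|A + B\| = \|A\| + \|B\|$, so $(A,B)$ is TEA.

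For part (b) in the parallel case, for each $k$ pick a unimodular scalar $\mu_k \in \IF$ with $\|A_k + \mu_k B_k\| = \|A_k\| + \|B_k\|$. The sequence $\{\mu_k\}$ lies on the (compact) unit circle of $\IF$, so after passing to a subsequence we may assume $\mu_k \to \mu$ with $|\mu|=1$. Then $A_k + \mu_k B_k \to A + \mu B$ in norm, and continuity of $\|\cdot\|$ gives $\|A + \mu B\| = \|A\| + \|B\|$, so $(A,B)$ is parallel. Note that the degenerate cases (one of $A, B$ being zero) are also covered, since a pair involving the zero matrix is trivially both parallel and TEA.

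For part (a), fix an arbitrary parallel (resp.\ TEA) pair $(A, B)$. By hypothesis, $(T_k(A), T_k(B))$ is a parallel (resp.\ TEA) pair for every $k$. Since $T_k \to T$ in norm, we have $T_k(A) \to T(A)$ and $T_k(B) \to T(B)$ in $\bM_{m,n}$. Part (b) then yields that $(T(A), T(B))$ is a parallel (resp.\ TEA) pair, proving that $T$ preserves parallel (resp.\ TEA) pairs. There is no real obstacle here; the only non-cosmetic point is the compactness argument for extracting a limit unimodular scalar $\mu$ in the parallel case, which is handled by a subsequence extraction as above.
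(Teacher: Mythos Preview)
Your proof is correct and uses essentially the same approach as the paper: compactness of the unit circle in $\IF$ to extract a convergent subsequence of unimodular scalars, then continuity of the norm to pass to the limit. The only difference is organizational---you prove (b) first and deduce (a) from it, whereas the paper proves (a) directly and notes that (b) is similar---but the core argument is identical.
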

\begin{proof}
  (a) Let $A,B\in \bM_{m,n}$ be parallel, and so are $T_k(A), T_k(B)$ for $k\geq 1$.
  In other words, there is a unimodular scalar $\mu_k$ such that
  $$
  \|T_k(A)+\mu_k T_k(B)\| = \|T_k(A)\| + \|T_k(B)\|, \quad\forall k\geq 1.
  $$
  Since the unit circle in $\IF$ is compact, we can assume $\mu_k\to \mu$ in $\IF$ with $|\mu|=1$.
  Letting $k\to\infty$, we have
  $$
  \|T(A) + \mu T(B)\| = \|T(A)\| + \|T(B)\|.
  $$
  Consequently, $T$ preserves parallel pairs.  The case for TEA pair preservers is similar.

  (b) The proof  is similar to (a).
\end{proof}

\begin{comment}
It might be worthwhile to mention the notion of the Birkhoff-James orthogonality.
Two matrices $A, B\in \bM_{m,n}$ are said to be Birkhoff-James orthogonal if
$\|A + \lambda B\| \geq \|A\|$ for all scalar $\lambda$.  It is known that
$A, B$ are    Birkhoff-James orthogonal if and only if there is a unit vector $v$ such that
$\|Av\|=\|A\|$ and $\langle Av, Bv\rangle =0$, or equivalently, there are $U\in \bU_m$, $V\in \bU_n$ such that
$$
  A = U\begin{pmatrix} a & 0\\ 0  &  A_1\end{pmatrix}V \quad\text{and}\quad
  B =   U\begin{pmatrix} 0 & 0\\ 0  &  B_1\end{pmatrix}V
  $$
  for some  matrices $A_1, B_1\in \bM_{m-1, n-1}$ with $\|A_1\|\leq a=\|A\|=s_1(A)$
 (see, e.g., \cite{LiSch}).  Note, however, that the Birkhoff-James orthogonality and the norm parallelism
 are not complementary notions.  For example, $I_2$ and $E_{11}$ are simultaneously
 Birkhoff-James orthogonal and parallel to
 each other.
\end{comment}

\begin{example}\label{eg:parallel}
\begin{enumerate}[{\rm (a)}]
    \item Two linearly dependent matrices  are always parallel.

      \item Any two complex unitary matrices $U,V\in \bU_n(\IC)$ are parallel.  Indeed,
  $$\| U + \mu V\| = \|I +\mu U^*V\| = 2 = \|U\| + \|V\|$$
  whenever the complex conjugate $\overline{\mu}$ belongs to the spectrum $\sigma(U^*V)$ of the complex unitary $U^*V$.
  They form a TEA pair  when $\sigma(U^* V)$ contains $1$.

  \item Two real orthogonal matrices $U,V\in \bU_n(\IR)$ are parallel if any only if $\sigma(U^{\TT} V)$ contains $1$ or $-1$.
  They form a TEA pair  when $\sigma(U^{\TT} V)$ contains $1$.

  \item Two real orthogonal matrices $F_1, F_2\in \bU_2(\IR)$ are parallel exactly when $F_1  = \pm F_2$, or
  $\det(F_1)= -\det(F_2)$.  In this case, they form a TEA pair unless $F_1=-F_2$.

%  \item A complex matrix $A\in\bM_2(\IC)$ is parallel to   $I_2$ exactly when $A$ is complex normal.
%\item A real matrix $A\in\bM_2(\IR)$ is parallel to   $I_2$ exactly when $A$ is real symmetric.
\end{enumerate}
\end{example}

\begin{lemma} \label{lem:new}
Let $A, B \in \bM_{m,n}$  such that $\|A\| = \|B\| = 1 =  \|A+B\|/2$.
Then $A+B$ and $A-B$ are parallel if and only if $A =  B$.
\end{lemma}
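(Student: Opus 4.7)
The plan is to observe that one direction is immediate (if $A=B$ then $A-B=0$, and any matrix is parallel to the zero matrix by the definition), so the real content lies in the converse: assuming $\|A\|=\|B\|=1$, $\|A+B\|=2$, and that $A+B$ is parallel to $A-B$, deduce that $A=B$.

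I would argue by contradiction, supposing $A\neq B$ so that both $A+B$ and $A-B$ are nonzero. Applying Lemma~\ref{lem:paralle-cond}(a) to the parallel pair $(A+B,A-B)$, there is a unit vector $v\in\IF^n$ with
\[
|\langle (A+B)v,\,(A-B)v\rangle|=\|A+B\|\,\|A-B\|=2\,\|A-B\|.
\]
Combining this with the Cauchy--Schwarz inequality and the bounds $\|(A+B)v\|\le\|A+B\|=2$ and $\|(A-B)v\|\le\|A-B\|$ forces equality throughout; in particular,
\[
\|(A+B)v\|=2\quad\text{and}\quad \|(A-B)v\|=\|A-B\|.
\]

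The first of these is the key step. Since $\|Av\|\le 1$ and $\|Bv\|\le 1$, the inequality
\[
2=\|(A+B)v\|\le \|Av\|+\|Bv\|\le 2
\]
gives $\|Av\|=\|Bv\|=1$ and triangle equality $\|Av+Bv\|=\|Av\|+\|Bv\|$ in the Hilbert space $\IF^m$. Strict convexity of the Euclidean norm then yields $Av=Bv$. Consequently $(A-B)v=0$, so $\|A-B\|=\|(A-B)v\|=0$, contradicting $A\neq B$.

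There is no real obstacle here; the proof is short once one extracts the witnessing unit vector from Lemma~\ref{lem:paralle-cond}(a). The only subtlety is recognising that Cauchy--Schwarz together with the norm bounds pins down $v$ as a common norm-attaining vector for both $A+B$ and $A-B$, at which point strict convexity of the $\ell_2$-norm on $\IF^m$ finishes the argument.
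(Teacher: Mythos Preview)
Your proof is correct, but it takes a genuinely different route from the paper's. The paper argues structurally: it applies a singular value decomposition to $A+B$ so that $U(A+B)V = (2I_k)\oplus C$ with $\|C\|<2$, observes that $\|A\|=\|B\|=1$ forces $UAV = I_k\oplus A_1$ and $UBV = I_k\oplus B_1$, and then invokes Lemma~\ref{lem:paralle-cond}(d) (the block form of non-parallel pairs) to conclude that $A_1-B_1=0$. Your argument instead uses the variational characterisation of Lemma~\ref{lem:paralle-cond}(a) to extract a single witnessing unit vector $v$, and then exploits strict convexity of the Euclidean norm on $\IF^m$ to force $Av=Bv$ directly. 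Your approach is shorter and avoids any block-matrix bookkeeping; the paper's approach, on the other hand, yields a bit more structural information along the way (the explicit form $UAV=I_k\oplus A_1$), which is in the spirit of the surrounding arguments in that section.
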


\begin{proof} \rm If $A = B$, then clearly $A+B=2A$ and $A-B = 0$ are parallel.
Conversely, suppose $A+B$ and $A-B$ are parallel.
Since $\|A+B\| = 2$, there are $U \in \bU_m, V \in \bV_n$ such that
$U(A+B)V = (2I_k) \oplus C$ with $\|C\| < 2$ and $1\leq k \leq \min\{m,n\}$.
Since $\|A\| = \|B\| = 1$,  the $(j,j)$ entry of $UAV$ and $UBV$ must be 1
for $j = 1, \dots, k$.
Consequently,  $UAV = I_k \oplus A_1$, $UBV = I_k \oplus B_1$,
and thus $U(A-B)V=0\oplus (A_1-B_1)$.
With  $\|C\|<2$,
it follows from Lemma \ref{lem:paralle-cond}(d)  that
$A_1-B_1=0$.  Thus, $A=B$.
\end{proof}

\begin{lemma} \label{lem:par-1dim}
If $\bV$ is a subspace of $\bM_{m, n}$ such that any two elements in $\bV$ are parallel,
then $\dim \bV \le 1$.
\end{lemma}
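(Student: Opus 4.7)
The plan is to argue by contradiction, leveraging Lemma \ref{lem:new} to manufacture a non-parallel pair inside any subspace of dimension at least two on which every pair is required to be parallel. The only real work is a mild normalization that puts an arbitrary linearly independent pair into the exact setting where Lemma \ref{lem:new} can be applied.

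First I would suppose $\dim \bV \ge 2$ and pick linearly independent $A, B \in \bV$. Rescaling, assume $\|A\| = \|B\| = 1$. Since $A$ and $B$ both lie in $\bV$, they are parallel, so there is a unimodular $\mu \in \IF$ with $\|A + \mu B\| = \|A\| + \|B\| = 2$. Replacing $B$ by $\mu B$ keeps us inside $\bV$ (because $\mu \in \IF$ and $\bV$ is $\IF$-linear), preserves $\|B\|=1$, and preserves linear independence from $A$, so we may assume from now on that $\|A+B\| = 2$. Next, I would consider $A+B, A-B \in \bV$: by hypothesis these must be parallel. But Lemma \ref{lem:new}, with its hypothesis $\|A\| = \|B\| = 1 = \|A+B\|/2$ now exactly verified, forces $A = B$, contradicting the linear independence of $A$ and $B$. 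Therefore $\dim \bV \le 1$.

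I do not anticipate any serious obstacle in this argument; the entire proof is essentially a one-line corollary of Lemma \ref{lem:new} once the normalization step is in place. The one detail worth flagging is that the absorption of the unimodular scalar $\mu$ into $B$ must keep us in $\bV$, which is automatic both when $\IF=\IR$ (so $\mu=\pm 1$) and when $\IF=\IC$ (so $\mu$ is a genuine complex scalar acting on the complex subspace $\bV$).
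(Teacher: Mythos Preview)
Your proof is correct and follows essentially the same approach as the paper: normalize two elements of $\bV$ to have norm one, use parallelism to get $\|A+\mu B\|=2$ for some unimodular $\mu$, and then apply Lemma~\ref{lem:new} to the pair $A+\mu B,\ A-\mu B\in\bV$ to force $A=\mu B$. The only cosmetic difference is that the paper works with $\mu B$ directly rather than absorbing $\mu$ into $B$, and phrases the conclusion as ``any two unit vectors in $\bV$ are scalar multiples of each other'' rather than as a contradiction.
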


\begin{proof}
Let $A, B\in \bV$ with $\|A\|=\|B\|=1$.
Since $A$ and $B$ are parallel, $\|A+\mu B\|=\|A\|+\|B\|=\|A\|+\|\mu B\|=2$ for some unimodular scalar $\mu$.
Now, $A+\mu B, A-\mu B \in \bV$ are parallel. By Lemma \ref{lem:new},
$A = \mu B$.  Since  this is true for any norm one matrices $A, B \in \bV$, we see that
$\bV$ has dimension at most one.
\end{proof}

Recall that for $p \ge 1$,
the $\ell_\infty$-norm on $\IF^m$ is defined as:
$\|x\|_{\ell_p} = (\sum_{j=1}^m |x_j|^p)^{1/p}$, and
$\|x\|_{\ell_\infty}  = \max_j |x_j|$
for $x = (x_1, \dots, x_m)^{\TT}\in \IF^m$.

\begin{lemma}[{\cite[Lemma 2.4]{LTWW-LP}}] \label{D-matrix} Suppose $B = (b_{rs}) \in \bM_m$
such that either  $b_{jj} > |b_{kj}|$ whenever $j \ne k$, or $b_{jj} > |b_{jk}|$ whenever $j \ne k$.
The following conditions are equivalent.
\begin{itemize}
\item [{\rm (a)}]
For any
 $x = (x_1, \dots, x_m)^{\TT}\in \IF^m$ with $y = Bx = (y_1, \dots, y_m)^{\TT}$, we have
$$
\|x\|_{\ell_\infty} = |x_r| > |x_s|\ \text{whenever  $s \ne r$}\quad
\implies\quad \|y\|_{\ell_\infty}= |y_r| \geq |y_s|\ \text{whenever $s\ne r$.}
$$
\item[{\rm (b)}] Either $m = 2$ and $B = B^*$ with $b_{11}=b_{22}>|b_{12}|$, or  $m > 2$ and $B = b_{11}I_m$.
\end{itemize}
\end{lemma}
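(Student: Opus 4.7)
The plan is to handle the two implications separately, with (a) $\Rightarrow$ (b) being the substantive direction. For (b) $\Rightarrow$ (a), the case $m > 2$ is immediate since $y = b_{11} x$ gives $|y_s| = |b_{11}|\,|x_s|$. For $m = 2$ with $B = B^*$, $b_{11} = b_{22} = a > |b_{12}|$, I would compute $|y_1|^2 - |y_2|^2$ directly; the Hermiticity of $B$ causes all mixed $x_1 \bar{x_2}$ terms to cancel, leaving the manifestly nonnegative $(a^2 - |b_{12}|^2)(|x_1|^2 - |x_2|^2)$.

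For (a) $\Rightarrow$ (b) I would treat the two forms of the hypothesis in parallel, focusing on the column-dominant case $b_{jj} > |b_{kj}|$ ($k \ne j$) which in particular forces $b_{jj} > 0$. For fixed distinct indices $r, k$, the family $x(t) = e_r + t e_k$ with $|t| < 1$ has $|x_r|$ as its unique maximum; applying (a) and passing to the limit $|t| = 1$ by continuity yields
\[
|b_{rr} + t b_{rk}|^2 \ge |b_{sr} + t b_{sk}|^2 \qquad \text{for all } |t| \le 1 \text{ and all } s \ne r.
\]
Expanding both sides and minimizing the right-hand side over the phase of $t$ at $|t| = 1$ gives, for each $s \ne r$,
\[
b_{rr}^2 + |b_{rk}|^2 - |b_{sr}|^2 - |b_{sk}|^2 \ge 2\,\bigl|\,b_{rr} b_{rk} - b_{sk}\,\overline{b_{sr}}\,\bigr|.
\]

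For $m = 2$, applying this with $(r,k,s) = (1,2,2)$ and then with the swap $(2,1,1)$ produces two opposing scalar inequalities that together force $b_{11}^2 + |b_{12}|^2 = b_{22}^2 + |b_{21}|^2$ and $b_{11} b_{12} = b_{22}\,\overline{b_{21}}$; combined with strict column-dominance, these give $b_{11} = b_{22}$ and $b_{12} = \overline{b_{21}}$, so $B = B^*$ of the required form. For $m \ge 3$, the presence of a third index $s \notin \{r, k\}$ provides additional constraints: the inequality above across all such triples, combined with richer test vectors $x = e_r + t_1 e_k + t_2 e_s$ with independent phase variation of $t_1, t_2$, should force $b_{sk} = 0$ for every $s \ne k$. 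Iterating over $k$ eliminates every off-diagonal entry, after which the equality case of the scalar inequality forces all diagonal entries to coincide, giving $B = b_{11} I_m$. The row-dominant case is treated symmetrically by interchanging the roles of row and column indices throughout the argument.

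The main obstacle I anticipate is the combinatorial bookkeeping for $m \ge 3$: carefully combining the polynomial inequalities across different triples $(r,k,s)$ to isolate each off-diagonal entry without circular reasoning requires care. A potentially cleaner route would be to exploit the closed-form maximizers of $|b_{rr} + tb_{rk}|$ and $|b_{sr} + tb_{sk}|$ over $|t| \le 1$ and argue via phase mismatch, or to bootstrap from the $m = 2$ case by examining principal $2\times 2$ submatrices of $B$ (though this needs additional work, since strict diagonal dominance of $B$ does not directly restrict the off-diagonal of a submatrix in the required way).
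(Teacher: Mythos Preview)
Your approach for (b) $\Rightarrow$ (a) and for the $m=2$ case of (a) $\Rightarrow$ (b) is correct and in fact somewhat cleaner than the paper's. The paper does not use your phase-minimization inequality; instead it first normalizes by conjugating with a permutation so that the first row of $B$ has maximal $\ell_1$-norm and with a diagonal unitary so that $b_{1j}\ge 0$, then establishes by continuity that the implication in (a) extends to vectors with several coordinates of equal maximal modulus. With this in hand it plugs in very concrete test vectors: $x=(1,\dots,1)^{\TT}$ forces all entries nonnegative with equal row sums; $x=(1,\pm 1,0,\dots,0)^{\TT}$ gives $b_{11}=b_{22}$ and $b_{12}=b_{21}$; for $m\ge 3$ one flips a single sign in $(1,\dots,1)^{\TT}$ to show all off-diagonal entries are equal, and finally $x=(1,-1,\dots,-1)^{\TT}$ forces that common value to be $0$.

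The genuine gap in your proposal is the $m\ge 3$ step. You write that the family of inequalities
\[
b_{rr}^2+|b_{rk}|^2-|b_{sr}|^2-|b_{sk}|^2 \ge 2\bigl|b_{rr}b_{rk}-b_{sk}\overline{b_{sr}}\bigr|
\]
over all triples, together with three-coordinate test vectors $e_r+t_1e_k+t_2e_s$, ``should force $b_{sk}=0$,'' but you do not carry this out, and you flag it yourself as the main obstacle. The difficulty is real: for distinct $r,k,s$ the swap $(r,k,s)\leftrightarrow(s,k,r)$ does \emph{not} produce an opposing inequality the way it does when $m=2$ (there $s=k$ forces the two inequalities to be exact negatives), so the cancellation trick that pinned down the $2\times 2$ case no longer applies. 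Your suggested fallback of restricting to principal $2\times 2$ submatrices also does not work directly, since condition (a) for $B$ does not pass to a submatrix. The paper's normalization step and the use of the all-ones vector and single-sign-flip vectors are precisely the missing idea that closes this case; without something of that kind your argument for $m\ge 3$ is incomplete.
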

\begin{proof}
We sketch a proof for completeness.
It suffices to verify the implication (a) $\Rightarrow$ (b), as the other implication can be verified directly.
Replacing $B$ by $P^\TT BP$ with a suitable permutation matrix $P$, we can assume that
 the first row $v_1= \begin{pmatrix} b_{11}& b_{12}& \cdots & b_{1m}\end{pmatrix}$
  of $B$ has   the maximal $\ell_1$ norm.  Replacing  $B$ by $U^*B U$ with a suitable diagonal  $U \in \bU_m$,
we can further assume all $b_{1j}\geq 0$.

By the continuity and an induction argument,  for $x = (x_1, \dots, x_m)^{\TT}$, $y = B x = (y_1, \dots, y_m)^{\TT}$
and $k=1,\ldots, m$, we have
\begin{align}\label{eq:kequal}
|x_1| = \cdots = |x_k|> |x_r|\ \text{for all $r>k$}\quad\implies\quad
 |y_1| = \cdots = |y_k|\geq |y_r| \ \text{for all $r>k$.}
\end{align}
Let  $x = (1,\dots, 1)^{\TT}$ and ${y}=Bx$. If $v_1, \dots, v_m$ are the rows of
$B$, then
\begin{align*}
\|v_{1}\|_{\ell_1} &= b_{11} + \cdots + b_{1m} =
|y_{1}| = |y_j| =
|b_{j1} + \cdots + b_{jm}| \\
&\le |b_{j1}| + \cdots + |b_{jm}|
= \|v_j\|_{\ell_1} \le \|v_1\|_{\ell_1}\quad\text{for $j \geq 1$.}
\end{align*}
Since $b_{jj} > 0$, we see that $b_{ij} \ge 0$ for all $i, j$,
and all row sums of $D$ are equal, say, to $s>0$.

Taking $x = (1,1, 0,\ldots,0)^{\TT}$, with \eqref{eq:kequal} we have $b_{11}+b_{12}=b_{21}+b_{22}$.
Similarly, taking $x = (1,-1, 0,\ldots,0)^{\TT}$, we have $|b_{11}- b_{12}| = |b_{21} - b_{22}|$.
It follows from either the assumption  $b_{11}> b_{21}$ and $b_{22}> b_{12}$, or the assumption
 $b_{11}> b_{12}$ and $b_{22}> b_{21}$ that
$b_{11} - b_{12} =  b_{22}-b_{21}$.
Consequently, $b_{11} = b_{22}$
and $b_{21} = b_{12}$.
The assertion follows when $m=2$.

Suppose $m\geq 3$.
Arguing similarly,
%Apply the same argument to other pairs $(i,j)$ with $i\neq j$ instead of $(1,2)$,
we see that
$$
b_{11}=\cdots = b_{mm}\quad\text{and}\quad b_{jk}=b_{kj}\quad\text{whenever $j\neq k$.}
$$
For a fixed $j = 1, \dots, m$, we take $x = (1,\ldots, 1, \underbrace{-1}_{j\text{th}}, 1,\ldots, 1)^{\TT}$
and ${y}=Bx$.
For   distinct indices $j,k, l$, we have
$|y_{l}| = s -2 b_{lj}=|y_k| = s - 2b_{kj}$.
It follows $b_{lj} = b_{kj}=b_{jk}$.
Thus,
$b_{jk}=b_{12}$ are all equal for $j \ne k$.
Consider $u = (1, -1, \ldots, -1)^{\TT}$ and ${v}=Bu= (v_1, v_2,  \ldots, v_m)^{\TT}$.
Then $|v_1|=|v_2|$ implies either
$$
b_{11} -  (m-1)b_{12}=  b_{11} + (m-3)  b_{12}\quad\text{or}\quad (m-1)b_{12}-b_{11}= b_{11} + (m-3)b_{12}.
$$
  Since $m\geq 3$,  either
$b_{12}=0$ or $b_{11}=b_{12}$.  But $b_{11}> b_{12}$.  This implies that $B=b_{11}I_m$.
\end{proof}

%\section{Proof of Theorem \ref{main}}\label{S:main-proof}
%\section{Proofs of Theorems for The case $(m,n)\neq (2,2)$}\label{S:main-proof}
\section{The case $(m,n)\neq (2,2)$}\label{S:main-proof}

%\subsection{Preliminaries and notations}

This section is devoted to proving
Theorem \ref{main}, for which we assume $(m,n) \ne (2,2)$.
We will assume that $n\geq m \geq 2$.
The proof for the case when $2\leq n \le m$ can be done similarly.
The proof of Theorem \ref{main} is quite involved.
With the following observation, we can focus on those invertible preservers.
The proof of Proposition \ref{prop:tea-para-not-inv} will be given in subsection \ref{SS:invertibility}.

\begin{proposition}\label{prop:tea-para-not-inv}
   Suppose $m,n \ge 2$.
Let $T: \bM_{m,n}(\IF) \rightarrow \bM_{m,n}(\IF)$ be a nonzero linear map. Then $T$ is invertible if one of the following holds.
\begin{itemize}
\item[(a)] $\bM_{m,n}(\IF)\neq \bM_{2}(\IR)$ and $T$ preserves TEA pairs.
\item[(b)]  $(m,n)\neq (2,2)$ and $T$ preserves parallel pairs with range space of dimension larger than one.
\end{itemize}
\end{proposition}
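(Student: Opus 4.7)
The strategy for both parts is proof by contradiction: assume $\ker T$ contains a nonzero matrix $A$, and derive a contradiction with either $T\neq 0$ (in case (a)) or $\dim\operatorname{range}(T)>1$ (in case (b)). After normalizing $\|A\|=1$ and composing $T$ with unitary equivalences $X\mapsto U^*XV$ on the domain (which preserve both parallelism and TEA by Lemma~\ref{lem:paralle-cond}), I may assume $A=\diag(1,s_2,\ldots,s_m)\oplus 0_{m,n-m}$ with $1\geq s_2\geq\cdots\geq s_m\geq 0$.

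For part (a), the key observation I would use is that if $A\in\ker T$ and $(A+B,A-B)$ is a TEA pair in the domain, then $(T(B),-T(B))$ must be a TEA pair in the range; since $\|T(B)+(-T(B))\|=0$ while $\|T(B)\|+\|{-T(B)}\|=2\|T(B)\|$, this forces $T(B)=0$. Hence $K(A):=\{B:\|A+B\|+\|A-B\|=2\|A\|\}\subseteq\ker T$. By convexity of the spectral norm, $B\in K(A)$ iff $t\mapsto\|A+tB\|$ is affine on $[-1,1]$, and $\alpha K(A)\subseteq K(A)$ for $|\alpha|\leq 1$, so $\operatorname{span}_\IF(K(A))\subseteq\ker T$. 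A direct perturbation analysis of the top singular value in the normal form of $A$ shows $K(A)$ contains $E_{11}$ and every $E_{ij}$ with $i,j\geq 2$. I then iterate with any newly-found kernel element $A'$ as the next anchor, obtaining further generators; for $(m,n)\ne(2,2)$ the iteration fills all of $\bM_{m,n}$, contradicting $T\ne 0$. The exception $\bM_2(\IR)$ is genuine because iteration stalls at $\operatorname{span}_\IR\{E_{11},E_{22}\}$, whereas for $\bM_2(\IC)$ the iteration succeeds because $\ker T$ is closed under multiplication by $i$, and anchors such as $I_2\in\ker T$ unlock the missing generators.

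For part (b) the TEA trick fails because $T(B)$ and $-T(B)$ are always linearly dependent, hence parallel. Instead I exploit that $T(B+sA)=T(B)$ for every $s\in\IF$, so whenever the pair $(B+sA,C+tA)$ is parallel in the domain for some $s,t$, the pair $(T(B),T(C))$ must be parallel in the range. My plan is to show that for $(m,n)\ne(2,2)$, every pair $B,C\in\bM_{m,n}$ admits such shifts $s,t$ (possibly after first enlarging $\ker T$ via a TEA-style argument applied to specific TEA configurations obtainable from parallel ones). Once every pair in $\operatorname{range}(T)$ is forced to be parallel, Lemma~\ref{lem:par-1dim} gives $\dim\operatorname{range}(T)\leq 1$, contradicting the hypothesis.

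The main obstacle I anticipate is the perturbation analysis of the spectral norm in part (a), particularly when the top singular value of $A$ has multiplicity greater than one; it is here that the dimension hypothesis $(m,n)\ne(2,2)$ becomes essential, and where the real $2\times 2$ exception emerges. For part (b), the corresponding difficulty is engineering the shifts $s,t$ uniformly across all pairs $B,C$: the proof must either construct such shifts explicitly from the singular value geometry when $(m,n)\ne(2,2)$, or enlarge $\ker T$ to a subspace rich enough that pairwise parallelization modulo $\ker T$ is automatic. Both tasks rely on the extra ``room'' in the matrix space available precisely when $(m,n)\ne(2,2)$, explaining the delicate nature of the excluded $2\times 2$ case.
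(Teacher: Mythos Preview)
Your overall strategy for both parts aligns with the paper's: the ``TEA trick'' $K(A)\subseteq\ker T$ is exactly the paper's observation that $\xi A\pm B$ are TEA for large $\xi$, and shifting by multiples of a kernel element to manufacture parallelism is the engine of the paper's part (b). The iteration you describe for part (a) when $s_1(A)>s_2(A)$ is essentially the paper's Case~1.

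However, there is a genuine gap in the full-multiplicity case of part (a), and your remedy for $\bM_2(\IC)$ is incorrect. When $A=I_2$ (or more generally when $s_1(A)=\cdots=s_m(A)$), the set $K(A)$ near zero consists only of \emph{real} multiples of $A$: for any Hermitian perturbation $B$ with eigenvalues $\lambda_1\geq\lambda_2$ one has $\|I+B\|+\|I-B\|=2+(\lambda_1-\lambda_2)$, and non-Hermitian perturbations are no better. So taking $I_2$ as a new anchor yields nothing, and closure of $\ker T$ under multiplication by $i$ does not help because $K(\mu A)=K(A)$ for unimodular $\mu$. The paper handles $\bM_2(\IC)$ (Cases~1c and 3c) by exhibiting TEA pairs \emph{not} of the form $(A+B,A-B)$: for instance, with $T(E_{11})=T(E_{22})=0$ one uses that $I_2-\tfrac{1+\sqrt{3}i}{2}P$ and $I_2-\tfrac{1-\sqrt{3}i}{2}P$ are TEA for the rank-one projection $P=\tfrac12(I+\mathcal C_2)$, forcing $-\tfrac{1+\sqrt{3}i}{2}T(P)$ and $-\tfrac{1-\sqrt{3}i}{2}T(P)$ to be TEA and hence $T(P)=0$. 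For $m\geq 3$ with $A=\sum E_{jj}$ the paper uses a separate counting argument (Case~3a). Your framework, restricted to pairs $(A+B,A-B)$, cannot reach these conclusions.

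For part (b), your plan is in the right spirit but too optimistic as stated. It is \emph{not} true that every pair $B,C$ can be made parallel by a single shift $(B+sA,C+tA)$; for example with $A=E_{11}$, $B=E_{12}$, $C=E_{21}$ no such $s,t$ exist. The paper instead first enlarges $\ker T$ (its Steps~1--4 reduce to $A$ with simple top singular value and then show $T$ annihilates the entire $(m-1)\times(n-1)$ corner), and then uses a battery of specific constructions (Lemma~3.6(a)--(d$'$), involving limits and carefully chosen auxiliary matrices like $E_{21}+t_kE_{23}$) to force the remaining images to be pairwise linearly dependent. Your hedge ``possibly after first enlarging $\ker T$'' is where essentially all the work lies.
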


\subsection{Proof of Theorem \ref{main}}

The implication (c) $\Rightarrow$ (a) in Theorem \ref{main} is clear.
If (a) holds, then  $T$ is invertible by Proposition \ref{prop:tea-para-not-inv}.
Clearly, $T$   preserves parallel pairs and the range space of $T$ has dimension larger than 1.
So, condition (b) follows. Suppose (b) holds. By Proposition \ref{prop:tea-para-not-inv}, $T$ is invertible.
We will show that $T$ has one of the forms in (c).

%Without loss of generality, we can assume $n\geq m \geq 2$ and $(m,n)\neq (2,2)$ in this section.
%Recall that $I_k= E_{11}+ \cdots + E_{kk}$  in $\bM_{m,n}$ for $1\leq k \leq m$.
For $A \in \bM_{m,n}$, let
$$
\mathcal{P}(A) = \{B\in \bM_{m,n}: B \text{ is parallel to }  A\}.
$$
Recall that $s_k(A)$ denotes the $k$th singular value of $A$ in descending order.

\begin{lemma} \label{ck1}
Let $A \in \bM_{m,n}$.
\begin{itemize}
\item[{\rm (a)}]
If $s_1(A) = s_2(A)$, then  {$\dim(\Span \mathcal{P}(A))\geq mn-1$}.
\item[{\rm (b)}]
Suppose $s_1(A) > s_2(A)$
and   $U \in \bU_m, V \in \bU_n$
such that $A = U([a] \oplus F)V^*$ with $\|F\|< |a|$.
%i.e., the first column of $V$ and the first column of $U$ are norm attaining unit vectors for $A$ and $A^*$, respectively.
Then
\begin{align}\label{eq:PA-g}
\mathcal{P}(A) =  \{U ([g] \oplus G)V^*: G \in \bM_{m-1,n-1}, |g| \ge \|G\|\}.
\end{align}
In particular, $\Span(\mathcal{P}(A))$ has dimension $(m-1)(n-1)+1$.
\item[{\rm (c)}]
If  $\Span(\mathcal{P}(A))$ has dimension $(m-1)(n-1) + 1$,
then $s_1(A) > s_2(A)$ so that \eqref{eq:PA-g} holds.

\end{itemize}

\end{lemma}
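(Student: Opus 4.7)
The plan is to apply Lemma \ref{lem:paralle-cond} systematically, reducing $A$ to SVD form since multiplying $A$ and $B$ by fixed unitaries on the left and right preserves parallelism and the dimension of $\Span\mathcal{P}(\cdot)$.

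For part (b), with $A=[a]\oplus F$ and $|a|>\|F\|$, I invoke Lemma \ref{lem:paralle-cond}(a): $B$ is parallel to $A$ iff $|\langle Av,Bv\rangle|=\|A\|\,\|B\|$ for some unit $v$. The strict gap $|a|>\|F\|$ forces any such $v$ to be a unimodular scalar multiple of $e_1$ (by examining $\|Av\|^2$ as a convex combination). The Cauchy--Schwarz equality then forces $Be_1$ to be collinear with $Ae_1=ae_1$, i.e., equal to $g\,e_1$ with $|g|=\|B\|$. Since $e_1$ is then a top right singular vector of $B$, the relation $B^*B\,e_1=\|B\|^2\,e_1$ forces the remaining entries of the first row of $B$ to vanish, yielding $B=[g]\oplus G$ with $\|G\|\leq|g|$. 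The converse is a direct computation of $\|A+\mu B\|=|a|+|g|$ for a suitable unimodular $\mu$, using $\|F+\mu G\|\le\|F\|+\|G\|<|a|+|g|$. The set $\{[g]\oplus G:|g|\geq\|G\|\}$ has span equal to the full linear subspace $\{[g]\oplus G:g\in\IF,\,G\in\bM_{m-1,n-1}\}$, of dimension $1+(m-1)(n-1)$.

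For part (a), with $A=sI_2\oplus F$ and $\|F\|\leq s$, my goal is to exhibit $mn-1$ linearly independent parallel matrices using Lemma \ref{lem:paralle-cond}(b). The necessary conditions extracted from the lemma are that $B$ admits a unit $\tilde v\in\IF^2$ which is a common eigenvector of $B_{11}$ and $B_{11}^*$ (where $B_{11}$ denotes the top-left $2\times 2$ block of $B$) with eigenvalue of modulus $\|B\|$, while annihilating both off-diagonal blocks $B_{21}$ and $B_{12}^*$. Explicit parallel matrices of this kind: (i) $E_{11}$, $E_{22}$, and $E_{12}+E_{21}$, each supported in the top-left block and with real symmetric $B_{11}$; (ii) for each $(i,j)$ with exactly one of $i,j$ exceeding $2$, the ``cross'' matrix $2E_{kk}+E_{ij}$, where $k\in\{1,2\}$ is chosen so that $e_k$ annihilates the unique off-diagonal block containing $E_{ij}$; (iii) $I_2\oplus E_{pq}$ for $p,q\geq 3$. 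Subtracting the parallel baselines $2E_{kk}$ and $I_2\oplus 0$ from types (ii) and (iii) places every $E_{ij}$ with $i\geq 3$ or $j\geq 3$ into $\Span\mathcal{P}(A)$, and the count $3+2(n-2)+2(m-2)+(m-2)(n-2)=mn-1$ yields the claim. (Over $\IC$, the additional parallel matrix $i(E_{12}-E_{21})$, whose top-left block is Hermitian, raises the dimension to $mn$.)

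Part (c) follows by contraposition: if $s_1(A)=s_2(A)$, then by (a), $\dim\Span\mathcal{P}(A)\geq mn-1>(m-1)(n-1)+1$, the difference being $m+n-3\geq 1$ for $m,n\ge 2$. Hence the hypothesis of (c) forces $s_1(A)>s_2(A)$, at which point (b) supplies \eqref{eq:PA-g}. The main obstacle is (a)(ii): verifying that for the correct choice of $k\in\{1,2\}$, the vector $e_k$ lies simultaneously in $\ker B_{21}$ and $\ker B_{12}^*$, and that $\|2E_{kk}+E_{ij}\|=2$; both reduce to careful bookkeeping of the block partition plus a short computation of $B^*B$.
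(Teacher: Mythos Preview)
Your argument is correct and follows essentially the same approach as the paper. For (b) you supply the details the paper omits with its ``Clearly''; for (a) both you and the paper exploit that when $s_1(A)=s_2(A)$ one may take $A=I_2\oplus Z$ and use \emph{both} $e_1$ and $e_2$ as top singular directions, so that $\mathcal{P}(A)$ contains the two block families $\{[b]\oplus X\}$ based at positions $(1,1)$ and $(2,2)$ --- the paper observes directly that the spans of these families together already contain every $E_{ij}$ except $E_{12},E_{21}$, while you build the same span by listing explicit parallel matrices $2E_{kk}+E_{ij}$ and subtracting; both then adjoin $E_{12}+E_{21}$ (and $i(E_{12}-E_{21})$ over $\IC$). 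One minor presentational point: in (a) you describe your block conditions as ``necessary conditions extracted from the lemma,'' but you are actually using them as \emph{sufficient} conditions to certify that your explicit matrices lie in $\mathcal{P}(A)$; the examples are correct regardless, since for each of them the vector $v=(e_k,0)$ directly witnesses $|\langle Av,Bv\rangle|=\|A\|\,\|B\|$ via Lemma~\ref{lem:paralle-cond}(a).
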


\begin{proof}
(a) If $s_1(A) = s_2(A)$, we may assume that
$A = I_2
%\begin{pmatrix}        1 & 0 \\        0 & 1 \\     \end{pmatrix}
\oplus Z$ for some $Z \in \bM_{m-2,n-2}$ with $\|Z\| \le 1=\|A\|$.
Then $\mathcal{P}(A)$ contains matrices of the form
$[b] \oplus X$  with $X\in \bM_{m-1,n-1}$
with $\|X\|\leq |b|$.
It follows that  $B=(b_{ij})\in \Span(\mathcal{P}(A))$ whenever $b_{1j}=b_{i1}=0$
for all $i,j \neq 1$.
Similarly, we also have $B=(b_{ij})\in \Span(\mathcal{P}(A))$ whenever $b_{2j}=b_{i2}=0$ for all $i,j \neq 2$.
Furthermore, $E_{12} + E_{21} \in \mathcal{P}(A)$.
It also holds that $ i(E_{12} - E_{21}) \in \mathcal{P}(A)$ if $\mathbb{F}=\mathbb{C}$. Thus,
$\mathcal{P}(A)$ spans $\bM_{m,n}$ if $\mathbb{F}=\mathbb{C}$, and $\dim (\Span \mathcal{P}(A))\geq mn-1$ if $\mathbb{F}=\mathbb{R}$.

(b) Assume $s_1(A) > s_2(A)$.
Let $U \in \bU_m$ and $V \in \bU_n$ satisfy
$A = U([a] \oplus F)V^*$ with $\|F\| < a$.
Clearly,
$$\mathcal{P}(A) = \{U([g] \oplus G)V^*: G \in \bM_{m-1,n-1}, \| G \| \leq |g|\}.$$
So, $\Span(\mathcal{P}(A))$ has dimension $(m-1)(n-1)+1$.

(c) If $\dim \Span(\mathcal{P}(A)) = (m-1)(n-1)+1$, then
$s_1(A) \ne s_2(A)$ by (a). The result follows.
%It follows that $A \in \operatorname{Pr}_1$ and the conclusion holds by (b).
\end{proof}

In the following, we  assume that $T$ is an invertible linear parallel pair preserver.

\begin{lemma} \label{Assertion 1}
 If $X \in \mathcal{P}(A)$, then $T(X) \in \mathcal{P}(T(A))$.  Moreover,
$$T(\mathcal{P}(A)) \subseteq \mathcal{P}(T(A)) \qquad \hbox{ and } \qquad
T(\Span \mathcal{P}(A)) \subseteq \Span \mathcal{P}(T(A)).$$
\end{lemma}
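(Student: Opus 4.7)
The plan is to observe that this lemma is essentially a direct consequence of the definitions, requiring only the parallel-pair preservation and linearity of $T$; invertibility is not needed for this particular statement.

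First I would unpack the first assertion. By definition, $X \in \mathcal{P}(A)$ means that $X$ and $A$ are parallel, i.e., there exists a unimodular scalar $\mu$ with $\|A + \mu X\| = \|A\| + \|X\|$. Since $T$ preserves parallel pairs (condition \eqref{condition-p1}), the pair $(T(A), T(X))$ is parallel, so by definition $T(X) \in \mathcal{P}(T(A))$. As this argument applies to every $X \in \mathcal{P}(A)$, the inclusion $T(\mathcal{P}(A)) \subseteq \mathcal{P}(T(A))$ follows immediately.

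Next I would pass to the span. Since $T$ is linear, for any subset $S \subseteq \bM_{m,n}$ we have $T(\Span\, S) = \Span\, T(S)$. Applying this to $S = \mathcal{P}(A)$ and combining with the previous inclusion gives
$$T(\Span\, \mathcal{P}(A)) = \Span\, T(\mathcal{P}(A)) \subseteq \Span\, \mathcal{P}(T(A)),$$
which completes the proof.

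There is no real obstacle here; the lemma is a formal consequence of the hypothesis on $T$ and the definition of $\mathcal{P}(\cdot)$. Its value is as a packaging step to be used in subsequent arguments, where the dimension counts from Lemma \ref{ck1} will be compared before and after applying $T$ in order to restrict the structure of $T$ on matrices with simple versus repeated top singular value.
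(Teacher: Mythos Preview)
Your proof is correct and matches the paper's approach exactly; the paper simply states that the lemma ``follows readily from the definition,'' and what you have written is precisely the unpacking of that remark. Your observation that invertibility of $T$ is not needed here is also accurate.
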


\begin{proof} This follows readily from the definition. \end{proof}

\begin{lemma} \label{Assertion 2}
 If $s_1(T(A)) > s_2(T(A))$ then $s_1(A) > s_2(A)$.
In this case, there are $U, X \in \bU_m$, and $V, Y \in \bU_n$ such that
\begin{align*}
A &= U([a] \oplus F)V \hbox{ with } a > \|F\|, \quad
\Span \mathcal{P}(A) = \{ U([z]\oplus Z)V: z \in \mathbb{F}, Z\in \bM_{m-1,n-1}\},\\
T(A) &= X([b] \oplus G)Y \hbox{ with } b > \|G\|, \quad
\Span \mathcal{P}(T(A)) = \{ X([z]\oplus Z)Y: z \in \mathbb{F}, Z \in \bM_{m-1,n-1}\}.
\end{align*}
We have $T(\Span \mathcal{P}(A)) = \Span \mathcal{P}(T(A))$, i.e., $T$ is a
linear bijection between the two subspaces $\Span \mathcal{P}(A)$ and $\Span \mathcal{P}(T(A))$.
\end{lemma}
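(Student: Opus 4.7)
The plan is to reduce the statement to a dimension count via Lemma \ref{ck1}, and then use the invertibility of $T$ to upgrade an inclusion to a bijection.

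First, I would apply Lemma \ref{ck1}(b) directly at $T(A)$: the hypothesis $s_1(T(A)) > s_2(T(A))$ furnishes $X \in \bU_m$, $Y \in \bU_n$, and $G \in \bM_{m-1,n-1}$ with $T(A) = X([b] \oplus G)Y$ and $b > \|G\|$, along with
\[
\Span \mathcal{P}(T(A)) = \{ X([z]\oplus Z)Y : z \in \IF,\ Z \in \bM_{m-1,n-1} \},
\]
whose dimension is $(m-1)(n-1)+1$.

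Next, Lemma \ref{Assertion 1} gives $T(\Span \mathcal{P}(A)) \subseteq \Span \mathcal{P}(T(A))$. Since $T$ is invertible, $\dim \Span \mathcal{P}(A) \leq (m-1)(n-1)+1$. If $s_1(A) = s_2(A)$ held, then Lemma \ref{ck1}(a) would force $\dim \Span \mathcal{P}(A) \geq mn - 1$; but the elementary identity
\[
(mn-1) - ((m-1)(n-1)+1) = m+n-3
\]
is positive for $m,n \geq 2$, yielding a contradiction. Hence $s_1(A) > s_2(A)$, and a second application of Lemma \ref{ck1}(b), this time to $A$, delivers the stated SVD $A = U([a]\oplus F)V$ with $a > \|F\|$ together with the matching description of $\Span \mathcal{P}(A)$.

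With both $\Span \mathcal{P}(A)$ and $\Span \mathcal{P}(T(A))$ now known to have dimension $(m-1)(n-1)+1$, the injectivity of $T$ forces the inclusion $T(\Span \mathcal{P}(A)) \subseteq \Span \mathcal{P}(T(A))$ to be an equality, so $T$ restricts to a linear bijection between the two subspaces. I do not anticipate any substantial obstacle; the sole point of attention is the strict dimension gap $mn - 1 > (m-1)(n-1)+1$, which holds for every $m,n \geq 2$ and is exactly what forces the singular value separation.
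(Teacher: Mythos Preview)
Your proposal is correct and follows essentially the same route as the paper: both apply Lemma~\ref{ck1} to compute $\dim\Span\mathcal{P}(T(A))=(m-1)(n-1)+1$, invoke Lemma~\ref{Assertion 1} for the inclusion, and then contrast with the lower bound $mn-1$ from Lemma~\ref{ck1}(a) (using invertibility of $T$) to rule out $s_1(A)=s_2(A)$, after which equality of the spans follows by matching dimensions. Your explicit verification that $mn-1>(m-1)(n-1)+1$ via $m+n-3>0$ is a welcome detail that the paper leaves implicit.
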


\begin{proof}
If $s_1(T(A)) > s_2(T(A))$, then $\Span \mathcal{P}(T(A))$ has dimension
$(m-1)(n-1) + 1$.  By Lemma \ref{ck1}, if $s_1(A) = s_2(A)$,
 then
 $\dim(T(\Span \mathcal{P}(A))) = \dim(\Span \mathcal{P}(A))\geq mn-1$.
By Lemma \ref{Assertion 1}, $T(\Span \mathcal{P}(A)) \subseteq \Span(\mathcal{P}(T(A)))$,
and the latter has dimension $(m-1)(n-1) +1$.  This is impossible.
Therefore, $s_1(A) > s_2(A)$
and $T(\Span \mathcal{P}(A))$ and $\Span \mathcal{P}(A)$ have the same dimension, and thus
$T(\Span \mathcal{P}(A)) = \Span \mathcal{P}(T(A))$.
The description of $\mathcal{P}(A)$ and $\mathcal{P}(T(A))$ are now clear.
\end{proof}

\begin{lemma} \label{Assertion 3}
There are $U \in \bU_m$, $V \in \bU_n$, and $D=(d_{ij})\in \bM_m$ such that
$$
T^{-1}(E_{jj}) =  U(\sum_{{k}=1}^m  d_{j{k}} E_{{k},{k}})V, \quad \text{for $j=1,\ldots, m$,}
$$
and    $d_{jj} > |d_{j{k}}|$ for all  $j,k = 1, \dots, m$ with $j\neq k$.
\end{lemma}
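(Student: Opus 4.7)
The plan is to show that the preimages $A_j := T^{-1}(E_{jj})$ for $j=1,\ldots,m$ share orthogonal dominant singular vectors, which affords a common diagonalization of the form required. First I would apply Lemma~\ref{Assertion 2} to each $A_j$: since $s_1(E_{jj})=1>0=s_2(E_{jj})$, each $A_j$ has a strictly dominant singular value, so I write
$$A_j = a_j u_j v_j^* + R_j,$$
where $a_j = \|A_j\|>0$, the unit vectors $u_j\in\IF^m$ and $v_j\in\IF^n$ are the dominant singular vectors (unique up to consistent phase), and $R_j$ satisfies $u_j^*R_j = 0$, $R_j v_j = 0$, and $\|R_j\|<a_j$.

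For $j\ne k$, note directly that $E_{kk}\in\Span\mathcal{P}(E_{jj})$ (since $E_{kk}$ has zero $j$-th row and $j$-th column). Lemma~\ref{Assertion 2}, which gives the bijection $T\colon\Span\mathcal{P}(A_j)\to\Span\mathcal{P}(E_{jj})$, then yields $A_k\in\Span\mathcal{P}(A_j)$. This membership is characterized by the existence of a scalar $\lambda_{jk}$ with $u_j^*A_k = \lambda_{jk} v_j^*$ and $A_k v_j = \lambda_{jk} u_j$. Substituting the decomposition of $A_k$ and contracting on the right by $v_k$ and on the left by $u_k^*$ (using $u_k^*R_k = 0 = R_k v_k$) produces the coupled system
$$a_k\beta = \overline{\lambda_{jk}}\alpha, \qquad a_k\alpha = \lambda_{jk}\beta,$$
where $\alpha:=v_k^*v_j$ and $\beta:=u_k^*u_j$. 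Multiplying these forces either (A) $\alpha=\beta=0$, or (B) $\alpha,\beta\ne 0$ with $|\lambda_{jk}|=a_k$.

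The main obstacle is excluding case (B). If $|\lambda_{jk}|=a_k$, then $|u_j^*A_k v_j|=|\lambda_{jk}|=a_k=\|A_k\|$, and the Cauchy--Schwarz chain $|u_j^*A_k v_j|\le\|A_k v_j\|\le\|A_k\|$ must consist of equalities; equality in the second forces $v_j$ to be a dominant right singular vector of $A_k$, which by uniqueness (since $s_1(A_k)>s_2(A_k)$) gives $v_j\parallel v_k$, and the symmetric argument yields $u_j\parallel u_k$. But then $\Span\mathcal{P}(A_j)=\Span\mathcal{P}(A_k)$ (both determined by the common rank-one direction), and pushing through the bijection $T$ forces $\Span\mathcal{P}(E_{jj})=\Span\mathcal{P}(E_{kk})$, contradicting the fact that these two spans impose zero constraints on different rows and columns of $\bM_{m,n}$. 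Hence case (A) holds for every pair $j\ne k$, so the sets $\{u_1,\ldots,u_m\}$ and $\{v_1,\ldots,v_m\}$ are orthonormal in $\IF^m$ and $\IF^n$, respectively.

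To conclude, extend $\{v_1,\ldots,v_m\}$ to an orthonormal basis $\{v_1,\ldots,v_n\}$ of $\IF^n$ and set $U=[u_1,\ldots,u_m]\in\bU_m$ and $V=[v_1,\ldots,v_n]^*\in\bU_n$. Then $(U^*A_j V^*)_{pq} = u_p^*A_j v_q$, and applying the relation $u_p^*A_j = \lambda_{pj}v_p^*$ (derived for all $p\le m$ by swapping the roles of $j$ and $k$ in the preceding step, with $\lambda_{jj}=a_j$) shows this entry equals $\lambda_{pj}\delta_{pq}$ for $q\le m$ and vanishes for $q>m$. Setting $d_{jk}:=\lambda_{kj}$ gives $A_j = U\bigl(\sum_{k=1}^m d_{jk}E_{kk}\bigr)V$. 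For the diagonal dominance, $d_{jj}=a_j>0$, while for $k\ne j$,
$$d_{jk} = u_k^*A_j v_k = a_j(u_k^*u_j)(v_j^*v_k) + u_k^*R_j v_k = u_k^*R_j v_k,$$
so $|d_{jk}|\le\|R_j\|<a_j=d_{jj}$, as required.
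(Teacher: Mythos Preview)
Your proof is correct and takes a genuinely different route from the paper's. The paper proceeds \emph{inductively}: it fixes $U,V$ so that $A_1=U([d_{11}]\oplus F)V$, then uses $A_2\in\Span\mathcal{P}(A_1)$ and the non-parallelism of $A_1,A_2$ to locate the dominant block of $A_2$ inside the $(m-1)\times(n-1)$ corner, adjusts $U,V$ by $[1]\oplus U_1$ and $[1]\oplus V_1$, and repeats---at each stage re-verifying that the earlier $A_i$'s remain diagonal. Your approach is \emph{global}: you extract the dominant singular directions $u_j,v_j$ of all the $A_j$ at once and show they are pairwise orthogonal, so a single choice $U=[u_1,\ldots,u_m]$, $V=[v_1,\ldots,v_n]^*$ simultaneously diagonalizes every $A_j$. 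This is cleaner and avoids the back-and-forth bookkeeping; the paper's version, by contrast, is more elementary in that it never needs to analyze the coupled system in $\alpha,\beta$ or invoke uniqueness of singular vectors.

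One minor remark: your exclusion of case~(B) via $\Span\mathcal{P}(E_{jj})=\Span\mathcal{P}(E_{kk})$ is correct but tacitly uses the standing hypothesis $(m,n)\ne(2,2)$ (for $m=n=2$ both spans equal the diagonal matrices). A shorter route, which the paper in effect uses and which works uniformly, is to observe that if $u_j\parallel u_k$ and $v_j\parallel v_k$ then $A_j$ and $A_k$ share a norm-attaining pair and hence are parallel by Lemma~\ref{lem:paralle-cond}(b); but $T(A_j)=E_{jj}$ and $T(A_k)=E_{kk}$ are not parallel, contradicting that $T$ preserves parallel pairs.
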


\begin{proof}
By Lemma \ref{Assertion 2}, if $T(A_1) = E_{11}$, then there are
$U\in \bU_m$ and $V\in \bU_n$ such that
\begin{gather*}
A_1 = U([d_{11}] \oplus F)V\quad\text{with}\quad d_{11}> \|F\|, \\
\Span(\mathcal{P}(A_1)) = \{U([x] \oplus X)V: x\in \mathbb{F}, X \in \bM_{m-1,n-1}\},
\intertext{and}
 T(\Span \mathcal{P}(A_1)) = \Span \mathcal{P}(E_{11}) = \{ [y] \oplus Y: y \in \mathbb{F},
Y\in \bM_{m-1,n-1}\}.
\end{gather*}
Now, suppose $T(A_2) = E_{22}$. Since $E_{22} \in \Span \mathcal{P}(E_{11})$, we see that
$A_2 \in \Span \mathcal{P}(A_1)$ and $A_2=U([d_{21}] \oplus B)V$ with
$B\in \bM_{m-1,n-1}$. Since $T(A_1) = E_{11}$ and $T(A_2) = E_{22}$
are not parallel, $A_1$ and $A_2$ are not parallel.
So, $\|B\| > |d_{21}|$, and $s_1(B)>s_2(B)$.  By Lemma \ref{Assertion 2},
we may change  $U, V$ to
$U([1]\oplus U_1)$ and $([1]\oplus V_1)V$ for suitable $U_1 \in \bU_{m-1},
V_1 \in \bU_{n-1}$
and assume that $A_1 = U([d_{11}] \oplus F_1)V$
and $A_2 = U([d_{21}] \oplus [d_{22}] \oplus F_2)V$ such that
$d_{22} > |d_{21}|$ and $d_{22} > \|F_2\|$.
 By Lemma \ref{Assertion 2} again,
$$
\Span(\mathcal{P}(A_2))
= \{U(x_{ij})V: x_{22} \hbox{ is the only nonzero entry
in row 2 and column 2} \},$$
and
\begin{align*}
T(\Span(\mathcal{P}(A_2))) &= \Span(\mathcal{P}(E_{22}))\\
 &= \{ (y_{ij}):
y_{22}   \hbox{ is the only nonzero entry
in row 2 and column 2} \}.
\end{align*}
Since $E_{11} \in \Span \mathcal{P}(E_{22})$, we see that
$A_1 \in \Span \mathcal{P}(A_2)$. Thus,
$A_1 = U([d_{11}] \oplus [d_{12}] \oplus \hat F_1)V$, with $\|\hat F_1\|< \|A_1\|=d_{11}$ and $|d_{12}|< d_{11}$.

Now, suppose $m\geq 3$ and $T(A_3) = E_{33}$. Since $E_{33} \in \Span \mathcal{P}(E_{jj})$
for $j = 1, 2$, we have
$A_3 \in \Span \mathcal{P}(A_j)$ for $j = 1,2$. It follows that
$A_3 = U(\diag(d_{31}, d_{32}) \oplus F_3)V$.
Since $T(A_3) = E_{33}$ is not parallel to $T(A_j) = E_{jj}$ for $j = 1,2$,
we see that $\|A_3\| = \|F_3\|> |d_{3j}|$ for $j = 1,2$.
By Lemma \ref{Assertion 2}, we may change  $U, V$ to
$U(I_2\oplus U_2)$ and $(I_2\oplus V_2)V$ for suitable $U_2 \in \bU_{m-2},
V_2 \in \bU_{n-2}$
and assume that $A_1 = U(\diag(d_{11}, d_{12}) \oplus G_1)V$,
$A_2 = U(\diag(d_{21}, d_{22}) \oplus G_2)V$
and $A_3 = U (\diag(d_{31}, d_{32}, d_{33}) \oplus G_3)V$ with $d_{33}> |d_{31}|, |d_{32}|, \|G_3\|$.
By Lemma \ref{Assertion 2} again,
$$
\Span(\mathcal{P}(A_3)) = \{U(x_{ij})V: x_{33} \in \mathbb{F} \hbox{ is the only nonzero entry
in row 3 and column 3} \},$$
and
\begin{align*}
T(\Span(\mathcal{P}(A_3))) &= \Span(\mathcal{P}(E_{33}))\\
&= \{ (y_{ij}):
y_{33} \in\mathbb{F} \hbox{ is the only nonzero entry
in row 3 and column 3} \}.
\end{align*}
Since $E_{11}, E_{22} \in \Span \mathcal{P}(E_{33})$, we see that
$A_1, A_2 \in \Span \mathcal{P}(A_3)$. Thus,
\begin{align*}
A_1 &= U(\diag(d_{11}, d_{12}, d_{13}) \oplus \hat G_1)V,
\quad\text{and} \\
A_2 &= U(\diag(d_{21}, d_{22}, d_{23}) \oplus \hat G_2)V.
\end{align*}
Neither of $T(A_1), T(A_2)$ is parallel to $T(A_3)$,  and thus neither of $A_1, A_2$ is parallel to $A_3$.
Consequently, $d_{11}> |d_{13}|$ and $d_{22}> |d_{23}|$.

We can repeat the argument for $A_4, \dots, A_m$,
and modify $U, V$ in each step until we get the conclusion that
$$
A_j = U(\diag(d_{j1}, d_{j2}, \ldots, d_{jm}))V\quad\text{with}\quad
d_{jj}> |d_{jk}|, \quad\text{whenever}\ j\neq k,\ \text{and}\ j,k=1,\ldots,m.
$$
\end{proof}

\begin{lemma}\label{lem:diagonal}
%There are unitary matrices $U\in \bU_m$, $V\in \bU_n$, and scalars $d_{11}$ and $d_{12}$ with $d_{11}>|d_{12}|$ such that
\begin{enumerate}[{\rm (a)}]
  \item When $m\geq 3$, there are unitary matrices $U\in \bU_m$, $V\in \bU_n$, and scalar  $d_{11}>0$ such that
  $$
  T^{-1}(E_{jj}) = U(d_{11} E_{jj})V \quad\text{for $j=1,\ldots, m$.}
  $$
  \item  When $m=2$, there are unitary matrices $U\in \bU_2$, $V\in \bU_n$,
  and scalars $d_{11}$ and $d_{12}$ with $d_{11}>|d_{12}|$ such that
\begin{align*}
  T^{-1}(E_{11}) &= U(d_{11} E_{11} +d_{12} E_{22})V, \\
  T^{-1}(E_{22}) &= U( \overline{d_{12}} E_{11} + d_{11} E_{22})V.
\end{align*}
\end{enumerate}
\end{lemma}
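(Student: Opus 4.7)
My plan is to apply Lemma \ref{D-matrix} to $B = D^{\TT}$, where $D = (d_{jk})$ is the matrix provided by Lemma \ref{Assertion 3}. Since $b_{jk} = d_{kj}$, the condition $d_{jj} > |d_{jk}|$ for $j \neq k$ becomes $b_{jj} > |b_{kj}|$ for $j \neq k$, so the hypothesis of Lemma \ref{D-matrix} is automatically met. If condition (a) of that lemma is also verified for $B$, its dichotomy yields either $D^{\TT} = d_{11} I_m$ when $m \geq 3$ (which is exactly (a) of the present statement), or, when $m = 2$, $D^{\TT} = (D^{\TT})^*$ with $b_{11} = b_{22}$. This last case is equivalent to $D$ being Hermitian with $d_{11} = d_{22}$ and $d_{21} = \overline{d_{12}}$, which unpacks directly into the form displayed in (b). The inequality $d_{11} > |d_{12}|$ then follows from $d_{11} > |d_{21}| = |d_{12}|$, and positivity $d_{11} > 0$ is inherent in $d_{11} > |d_{12}| \ge 0$.

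The crux is to verify condition (a) of Lemma \ref{D-matrix} for $B = D^{\TT}$: whenever $x = (x_1, \ldots, x_m)^{\TT} \in \IF^m$ satisfies $|x_r| > |x_s|$ for all $s \neq r$, the quantity $|(D^{\TT} x)_r|$ equals $\|D^{\TT} x\|_{\ell_\infty}$. I would argue by contradiction. Given such $x$, consider $A = \sum_{j=1}^m x_j E_{jj} \in \bM_{m,n}$; by linearity and Lemma \ref{Assertion 3},
$$
T^{-1}(A) = \sum_j x_j T^{-1}(E_{jj}) = U\Bigl(\sum_k (D^{\TT} x)_k E_{kk}\Bigr) V,
$$
a diagonal matrix in the $(U,V)$ coordinates. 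Assume for contradiction that $\|D^{\TT}x\|_{\ell_\infty}$ is attained at some index $s \neq r$. Testing against the unit vector $v = V^{*} e_s$, and using $d_{ss} > |d_{sk}|$ for $k \neq s$ (which gives $d_{ss} = \|T^{-1}(E_{ss})\|$), Lemma \ref{lem:paralle-cond}(a) identifies $T^{-1}(A)$ and $T^{-1}(E_{ss})$ as a parallel pair. Since $T$ preserves parallel pairs, $A$ and $E_{ss}$ must also be parallel; but a direct computation of $\langle Aw, E_{ss}w\rangle$ for unit $w \in \IF^n$, again via Lemma \ref{lem:paralle-cond}(a), shows that this forces $|x_s| = \|x\|_{\ell_\infty}$, contradicting $|x_r| > |x_s|$ with $r \neq s$.

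The main subtlety I expect is that $T^{-1}$ is not assumed to preserve parallel pairs — only $T$ does. The plan exploits this by first manufacturing a parallel pair on the $T^{-1}$-side, where Lemma \ref{Assertion 3} tells us the precise diagonal structure of $T^{-1}(E_{jj})$, and then transporting the information forward under $T$, at which point the tight parallelness criterion for standard-basis diagonal matrices closes the loop. Once this orientation is fixed, the remainder is bookkeeping: feed the conclusion of Lemma \ref{D-matrix} back into the formulas for $T^{-1}(E_{jj})$ to obtain (a) and (b).
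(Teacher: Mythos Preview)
Your proposal is correct and is essentially the paper's own argument: both reduce to verifying condition (a) of Lemma~\ref{D-matrix} for $B=D^{\TT}$ via the parallel-pair preservation of $T$, then read off the dichotomy. Your contradiction (if $\|D^{\TT}x\|_{\ell_\infty}$ is attained at $s\neq r$, then $T^{-1}(A)\parallel T^{-1}(E_{ss})$, hence $A\parallel E_{ss}$, impossible) is simply the contrapositive of the paper's direct phrasing (since $X$ is not parallel to $E_{ii}$ for $i\neq l$, $T^{-1}(X)$ is not parallel to $A_i$, hence $|y_l|>|y_i|$); the only cosmetic difference is that you run one uniform argument for all $m\ge 2$, whereas the paper treats $m=2$ by a short appeal to Lemma~\ref{Assertion 2}.
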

\begin{proof}
We continue the argument and notation usage in the proof of Lemma \ref{Assertion 3}.
For notation simplicity, we assume that
$U = I_m, V = I_n$.
Recall that
\begin{align*}
T^{-1}(x_1 E_{11} + \cdots + x_mE_{mm})
&= x_1(\sum_{k=1}^{m} d_{1k} E_{kk}) + \cdots + x_m(\sum_{k=1}^{m} d_{mk} E_{kk})\\
 &= y_1 E_{11} + \cdots + y_m E_{mm}
\end{align*}
with $(y_1, \dots, y_m)^{\TT} = D^{\TT}(x_1, \dots, x_m)^{\TT}$,
where $D=(d_{ij})\in \bM_m$ with $d_{jj} > |d_{j{k}}|$ whenever $j\neq k$.

When $m=2$, since $T$ preserves parallel pairs, the $2\times 2$ matrix $B=D^\TT$
satisfies the condition (a) in  Lemma \ref{D-matrix} by Lemma \ref{Assertion 2},
and thus  $D=D^*$ with $d_{11}=d_{22}> |d_{12}|$.
The assertion follows.

Suppose $m\geq 3$.
If $X = \sum_{j=1}^m x_j E_{jj}$
with $|x_{l}| > |x_i|$ for all $i \ne {l}$, then
$X$ is parallel to $E_{{l}{l}}$ but not
any other $E_{ii}$ with $i\ne {l}$.  Consequently,
$$
Y = T^{-1}(X) =\sum_{j=1}^m x_j A_j = \sum_{j=1}^m y_j E_{jj}
$$
is not parallel to $T^{-1}(E_{ii}) = A_i$, and thus not parallel to  $E_{ii}$
for any $i \ne {l}$. It forces $Y$  parallel to $E_{{l}{l}}$ but not any other $E_{ii}$.
As a result, $|y_{l}| > |y_i|$ for all $i\ne {l}$.
Thus, the matrix $B=D^\TT$ satisfies the condition (a) in
 Lemma \ref{D-matrix}. It follows $D=d_{11}I_m$, and the assertion follows.
\end{proof}

%Now, we can finish the proof of Theorem \ref{main} by establishing  the following.

%\begin{lemma}
%The implication $(b) \Rightarrow (c)$ in Theorem \ref{main} holds.
%\end{lemma}

\begin{proof}[Proof of Theorem \ref{main}]
It remains to verify the implication $(b) \Rightarrow (c)$ in
Theorem \ref{main}.
We verify that $T^{-1}$ sends disjoint rank one matrices to disjoint matrices, and
then $T^{-1}$, as well as $T$, has the asserted form by \cite[Theorem 2.1]{LTWW} (and its remark (iii)).

To see this, let $X_1, X_2\in \bM_{m,n}$ be  disjoint rank one matrices with norm one.
There are $P \in \bU_m, Q \in \bU_n$ such that
$X_j =   PE_{jj}Q$   for $j=1,2$.  Enlarge them with $X_j =   PE_{jj}Q$   for $j=3, \ldots, m$.
Let $S: \bM_{m,n}\to \bM_{m,n}$ be defined by $S(X) = P^*T(X)Q^*$ for $X \in \bM_{m,n}$.
It is clear that the bijective linear map $S$ also preserves parallel pairs, and $S^{-1}(E_{jj})= T^{-1}(X_j)$
  for $j=1,2, \ldots, m$.  We might assume $X_1=E_{11}$ and $X_2=E_{22}$

\medskip
 \noindent
 \textbf{Case 1: $n \ge m \ge 3$.}\
By Lemma \ref{lem:diagonal}, $T^{-1}(X_1), T^{-1}(X_2)$ are disjoint.

\medskip
 \noindent
\textbf{Case 2: $n > m = 2$.} \
By Lemma \ref{lem:diagonal}, there are $U'\in \bU_2$ and $V'\in \bU_n$ such that
\begin{align*}
  T^{-1}(E_{11}) &= U'(d_{11} E_{11} + d_{12} E_{22})V', \\
  T^{-1}(E_{22}) &= U'(\overline{d_{12}} E_{11} + d_{11} E_{22})V'
\end{align*}
 with $d_{11}>|d_{12}|$.
We may further assume that
$$
T^{-1}(E_{11}) =  \,
\begin{bmatrix}
 \begin{matrix}
  1 & 0 \\
  0 & \mu
 \end{matrix} & \rvline &
 \bigzero_{2, n-2}
\end{bmatrix}\quad\text{and}\quad
T^{-1}(E_{22})= \,\begin{bmatrix}
 \begin{matrix}
  \bar \mu  & 0 \\
  0 & 1
 \end{matrix} & \rvline &
 \bigzero_{2, n-2}
\end{bmatrix}
$$
for some scalar $\mu$ with $|\mu| <1$.
%Suppose $n > m$.
Consider the restriction of $T^{-1}$ on the span of
$\{E_{11}, E_{23}\}$.  By Lemma \ref{lem:diagonal} again, we see that $T^{-1}(E_{11}) = U\,
\begin{bmatrix}
 \begin{matrix}
  1 & 0 &0\\
  0 & 0 & \gamma
 \end{matrix} & \rvline &
 \bigzero_{2, n-3}
\end{bmatrix}V$ and
$T^{-1} (E_{23})= U\,
\begin{bmatrix}
\begin{matrix}
  \bar \gamma  & 0 & 0 \\
  0 & 0& 1
 \end{matrix} & \rvline &
 \bigzero_{2, n-3}
\end{bmatrix}V$
for some  $U\in \bU_2, V \in \bU_n$ and $|\gamma| <1$.
Interchanging the second and the third rows of the unitary matrix $V$, we can write
$$
T^{-1}(E_{11})
=  U\,\begin{bmatrix}
 \begin{matrix}
  1 &   0 \\
  0 &   \gamma
 \end{matrix} & \rvline &
 \bigzero_{2, n-2}
\end{bmatrix}V
\quad \hbox{ and } \quad
T^{-1}(E_{23})
=  U\,\begin{bmatrix}
 \begin{matrix}
  \overline{\gamma} &   0 \\
  0 &   1
 \end{matrix} & \rvline &
 \bigzero_{2, n-2}
\end{bmatrix}V.
$$
Observe that
$$ T^{-1}(E_{11}) =\,\begin{bmatrix}
 \begin{matrix}
  1 & 0  \\
  0 & \mu
 \end{matrix} & \rvline &
 \bigzero_{2, n-2}
\end{bmatrix}
=  U\,\begin{bmatrix}
 \begin{matrix}
  1 &  0 \\
  0 &   \gamma
 \end{matrix} & \rvline &
 \bigzero_{2, n-2}
\end{bmatrix}V.
%\quad \hbox{ and } \quad |\mu|=|\gamma|.
$$
A direct computation shows that
$V=V_1\oplus V_2$ for some   $V_1\in \bU_2$, $V_2\in \bU_{n-2}$ and
$$ \diag(1, \mu) =  U\,\diag(1, \gamma)V_1.
$$
Evaluating the determinants of the matrices on both sides, we get $|\mu| = |\gamma|<1$.
Let $U^*=(u_{ij})$ and $V_1=(v_{ij})$.
Notice that
 the first row vectors of both sides of $U^*\diag(1, \mu)= \diag(1, \gamma)V_1$  have norm one.
Because $U^*$ is a unitary matrix, we have $|u_{11}|=1$ and $u_{12}= 0$, and then $u_{21}= 0$.
Hence $U$ and $V_1$ are unitary diagonal matrices with $u_{11}=v_{11}$ and $\mu u_{22} = \gamma v_{22}$.
If  $|\mu|=|\gamma|\neq0$, then
$$T^{-1} (E_{23})= U\,\begin{bmatrix}
\begin{matrix}
  \bar \gamma  & 0 \\
  0 & 1
 \end{matrix} & \rvline &
 \bigzero_{2, n-2}
\end{bmatrix}V
=
\begin{bmatrix}
\begin{matrix}
  \overline{u_{11}}v_{11}\bar \gamma  & 0 \\
  0 & \overline{u_{22}}v_{22}
 \end{matrix} & \rvline &
 \bigzero_{2, n-2}
\end{bmatrix}
=
\begin{bmatrix}
\begin{matrix}
  \bar \gamma  & 0 \\
  0 & \frac{\mu}{\gamma}
 \end{matrix} & \rvline &
 \bigzero_{2, n-2}
\end{bmatrix}
$$
is parallel to $T^{-1}(E_{22})$, which is impossible because $E_{22}$ is not parallel to
$E_{23}$. Therefore $\mu=\gamma=0$.
In particular, $T^{-1}(E_{11})$ and $T^{-1}(E_{22})$ are disjoint, as asserted.
%Thus, we can apply  \cite[Theorem 7.2]{LTWW} to $T^{-1}$.
 \end{proof}

\subsection{Invertibility of parallel/TEA preservers}\label{SS:invertibility}

In this subsection, we prove Proposition \ref{prop:tea-para-not-inv}.
Recall that we can assume $n\geq m\geq 2$ and $(m,n)\neq (2,2)$.

\begin{lemma} \label{new}
Let $Z_1, Z_2\in \bM_{2,n}$ such that $Z_1\neq 0$.
 Suppose
$Z_1 + \xi Z_2$ and $Z_1-\xi Z_2$ attain the triangle
equality for all $\xi \in \IR$. Then $Z_2 =0$.
\end{lemma}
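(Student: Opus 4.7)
The approach I would take is to extract the quantitative consequence of the TEA hypothesis and then rule out $Z_2\ne 0$ by letting $|\xi|$ grow without bound. Since by assumption the pair $(Z_1+\xi Z_2,\,Z_1-\xi Z_2)$ attains the triangle equality for every $\xi\in\IR$, we have the identity
\[
\|Z_1+\xi Z_2\|+\|Z_1-\xi Z_2\|
\;=\;\|(Z_1+\xi Z_2)+(Z_1-\xi Z_2)\|
\;=\;2\|Z_1\|
\]
for all real $\xi$, which is the sole input I plan to exploit.

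The key observation is that this identity is incompatible with the growth of the norm in $\xi$ when $Z_2\ne 0$. Assuming $Z_2\ne 0$ for contradiction, the reverse triangle inequality gives
\[
\|Z_1\pm\xi Z_2\|\;\ge\;|\xi|\,\|Z_2\|-\|Z_1\|,
\]
so as soon as $|\xi|>2\|Z_1\|/\|Z_2\|$ both terms $\|Z_1+\xi Z_2\|$ and $\|Z_1-\xi Z_2\|$ strictly exceed $\|Z_1\|$, making their sum strictly larger than $2\|Z_1\|$. This contradicts the displayed identity, forcing $Z_2=0$.

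There is essentially no obstacle here: the argument only uses that $\|\cdot\|$ is a norm, so the matrix structure of $\bM_{2,n}$ is not actually needed for this particular lemma. If one prefers to avoid sending $|\xi|\to\infty$ (perhaps to keep the reasoning local), an equivalent route is to note that convexity of $\xi\mapsto\|Z_1+\xi Z_2\|$ together with the midpoint-equality $f(\xi)+f(-\xi)=2f(0)$ forces $f$ to be affine, hence constant by nonnegativity, and then analyze the $2\times 2$ Gram matrix $(Z_1+\xi Z_2)(Z_1+\xi Z_2)^*$ via its largest eigenvalue. But the direct growth argument above is the shortest and most transparent.
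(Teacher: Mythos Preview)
Your argument is correct and, in fact, considerably shorter than the paper's own proof. The single identity
\[
\|Z_1+\xi Z_2\|+\|Z_1-\xi Z_2\|=\|2Z_1\|=2\|Z_1\|
\]
already suffices: if $Z_2\ne 0$ the left-hand side tends to infinity with $|\xi|$, which is absurd. As you note, this uses nothing beyond the norm axioms, so the $2\times n$ matrix structure plays no role.

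By contrast, the paper argues entrywise. After putting $Z_1$ in diagonal form $d_1E_{11}+d_2E_{22}$, it uses the same displayed identity but squeezes it against the $(1,1)$ entries to pin down $\|Z_1\pm\xi Z_2\|=d_1$ for all $\xi$, deduces that the first row and column of $Z_2$ vanish, and finally rules out a nonzero second row by exhibiting a $\xi$ that forces the norm above $d_1$. That route does eventually exploit the spectral-norm structure of $\bM_{2,n}$, but only to reach a conclusion your growth estimate obtains for free. Your approach is both more elementary and more general; the paper's is more concrete but does unnecessary work for this particular lemma.
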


\begin{proof}
We can assume that $Z_1 = d_1 E_{11}+d_2 E_{22}\in \bM_{2,n}$ is nonzero with $d_1 \ge d_2 \ge 0$
and $Z_2 = (\gamma_{ij})\in \bM_{2,n}$.
For $\xi > 0$,
$$2 d_1 = (d_1+\xi \operatorname{Re}\,(\gamma_{11})) + (d_1 - \xi \operatorname{Re}\,(\gamma_{11}))
\le |d_1 + \xi \gamma_{11}| + |d_1 - \xi \gamma_{11}|$$
$$
\qquad \le  \|Z_1+\xi Z_2\|+ \|Z_1-\xi Z_2\| = \|(Z_1 + \xi Z_2)+(Z_1-\xi Z_2)\| = 2d_1.$$
Thus,
$$d_1\pm \xi \operatorname{Re}\,(\gamma_{11}) = |d_1 \pm \xi \gamma_{11}| =  \|Z_1 \pm \xi Z_2\|,$$
and $d_1 \pm \xi \operatorname{Re}\,(\gamma_{11})$ are positive for all $\xi>0$. Hence, $\operatorname{Re}\,(\gamma_{11})=0$, and then $\gamma_{11}=0$.
Moreover, the $(1,1)$ entry of $Z_1 \pm \xi Z_2$ attains the norm of $Z_1 \pm \xi Z_2$.
Hence, the off-diagonal entries of the first row and the first column
of $Z_1 \pm \xi Z_2$ are zeros, and the entries in the  first row and the first column of $Z_2$ are zeros.
If the second row of $Z_2$ is not zero, we can choose
$\xi$ such that
$$
d_1 <
\left\{ |d_2\pm \xi \gamma_{22}|^2 + \xi^2 \sum_{j=3}^n |\gamma_{2j}|^2\right\}^{1/2}
\leq \|Z_1\pm \xi Z_2\|=d_1,
$$
which is a contradiction.
\end{proof}

\begin{proof}[Proof of Proposition \ref{prop:tea-para-not-inv} {\rm (a)}]
Assume that $T$ is nonzero and preserves TEA pairs.
If $T$ is not invertible, there is a nonzero $A \in \bM_{m,n}$
such that $T(A) = 0$. We may replace $T$ by
$X \mapsto  T(UXV)$ for $U \in \bU_m, V \in \bU_n$, and
assume that $A = \sum_{j=1}^m s_j(A) E_{jj}$.

{\bf Case 1.} Suppose that $s_1(A) > s_2(A)$.

We claim that $T(B) = 0$ for all $B$ of the form  $[0] \oplus B_1$.
Indeed, for $\xi\in \IR$ with sufficiently large magnitude,
$\xi A+B$ and $\xi A -  B$  attain the triangle equality
and so are $T(\xi A + B) = T(B)$ and $T(\xi A - B) = T(-B)$.
It follows that
$$0 = \|T(B) + T(-B)\| = \|T(B)\| + \|T(-B)\| = 2 \|T(B)\|.$$
Thus, $T(B) = 0$.
In particular, $T(E_{ij}) = 0$ for
$i\ge 2$ and  $j \ge 2$.
Repeating the above arguments with
 $T(E_{22})=0$ we conclude that all $T(E_{ij})=0$ except for possibly $(i,j)=(1,2)$ or $(2,1)$.

{\bf Case 1a.}
If $m \ge 3$, we arrive at the contradiction
 that $T(X) = 0$ for all $X$, since $T(E_{31})=0$ implies $T(E_{12})=0$ and
$T(E_{32})=0$ implies $T(E_{21})=0$.

{\bf Case 1b.}
If $n\geq 3$, then $T(E_{23})=0$ implies $T(E_{12})=0$, and $T(E_{13})=0$ implies $T(E_{21})=0$.

{\bf Case 1c.}  Suppose $\bM_{m,n} = \bM_2(\IC)$.
We have $T(E_{11}) = T(E_{22}) =0$.
Consider the  projection
$$
P=\frac{1}{2}\begin{pmatrix} 1 & 1 \\ 1 & 1\end{pmatrix}.
$$
Since $I_2-\frac{ 1+\sqrt{3}i }{2}P$ and $I_2 -\frac{ 1-\sqrt{3}i} {2}P$ attain the triangle equality,
so are $T(I_2-\frac{ 1+\sqrt{3}i }{2}P)= -\frac{ 1+\sqrt{3}i }{2}T(P)$ and
$T(I_2 -\frac{ 1-\sqrt{3}i} {2}P)=-\frac{ 1-\sqrt{3}i }{2}T(P)$, which implies $T(P) =0$. Hence
$T(E_{12}+E_{21})=0$.
Since $E_{12}+ \frac{ 1+\sqrt{3}i }{2}  E_{21}$ and $E_{12}+\frac{ 1-\sqrt{3}i }{2}E_{21}$
attain the triangle equality,
so do $T(E_{12}+ \frac{ 1+\sqrt{3}i }{2}  E_{21})$ and $T(E_{12}+\frac{ 1-\sqrt{3}i }{2} E_{21})$.
It follows that $T(E_{12})=T(E_{21})=0$.
Thus, $T$ is the zero map, which is a contradiction.

{\bf Case 2.} Suppose $s_1(A) = \cdots = s_k(A) > s_{k+1}(A)$ with
$k < m \le n$.
Then for $B = E_{mm}$, we have $\xi A + B$ and $\xi A - B$
attain the triangle equality for sufficiently large $\xi$, and
so are $T(\xi A + B) = T(B)$ and $T(\xi A - B) = -T(B)$.
Hence, $T(B) = T(E_{mm}) = 0$. By   Case 1, with $E_{mm}$ taking the role of $A$, we get a contradiction again.

{\bf Case 3.}
Suppose $s_1(A) = \cdots = s_m(A)$.  We can further
assume that $T(A) = 0$ with $A = \sum_{i=1}^m E_{ii}$.

{\bf Case 3a.} Suppose $n \geq m \geq 3$. Let $J, K$ be any   subsets of
$M=\{1,\ldots, m\}$ such that $J\cup K\neq M$.
Let $E_J =\sum_{j\in J} E_{jj}$  and  $E_K= \sum_{k\in K} E_{kk}$.
Now, $A-E_J$ and $A-E_K$ attain the triangle equality,
and  so do $T(A-E_{J}) = -T(E_{J})$ and $T(A-E_{K}) = -T(E_{K})$.
Consequently,
$$
\left\|\sum_{j\in J} T(E_{jj}) + \sum_{k\in K} T(E_{kk})\right\| =
\left\|\sum_{j\in J} T(E_{jj}) \right\| +
\left\|\sum_{k\in K} T(E_{kk})\right\|.
$$
An inductive argument with the fact $T(A)= \sum_{i=1}^{m} T(E_{ii})=0$ gives us that
$$
\|T(E_{11})\| = \|T(E_{22}) + \cdots + T(E_{mm})\|
= \|T(E_{22})\| + \cdots + \|T(E_{mm})\|.
$$
We have similar equalities with $E_{11}$ replaced by
$E_{jj}$ for $j=2,\ldots, m$.
Summing up these $m$ equalities, we have
$$
\sum_{j=1}^{m} \|T(E_{jj})\| = (m-1) \sum_{j=1}^{m} \|T(E_{jj})\|,
$$
which implies that
$T(E_{jj})=0$ for $j=1,\ldots, m$, and then Case 1 applies.

{\bf Case 3b.} Suppose $n > m =2$.
For any $\xi \in \IR$, choose $\gamma > (1+\xi^2)/2$.  Since
$\gamma A - E_{22} + \xi E_{23}$ and $\gamma A - E_{22} - \xi E_{23}$
attain the triangle equality,  so do their images
$- T(E_{22}) + \xi T(E_{23})$
and  $- T(E_{22}) - \xi T(E_{23})$.
 It follows from Lemma \ref{new},
$T(E_{23}) = 0$, and Case 1 applies.

{\bf Case 3c.}
Finally, suppose $\bM_{m,n}= \bM_2(\IC)$ and $T(A) = 0$ with $A = I_2$.
Now, $A-\frac{ 1+\sqrt{3}i }{2}E_{11}$
and $A -\frac{ 1-\sqrt{3}i} {2}E_{11}$
attain the triangle equality, and
so do $T(A- \frac{ 1+\sqrt{3}i }{2}E_{11})= -\frac{ 1+\sqrt{3}i }{2}T(E_{11})$ and
$T(A-\frac{ 1-\sqrt{3}i }{2}E_{11})=-\frac{ 1-\sqrt{3}i }{2}T(E_{11})$,
which implies $T(E_{11})=0$, and Case 1 applies.

Therefore, we will arrive at the conclusion that $T=0$ in any case when $T$ is not invertible.
\end{proof}

We need another lemma to prove Proposition \ref{prop:tea-para-not-inv}(b).

\begin{lemma}\label{lem:Sp-cases}
  Suppose $n\geq m\geq 2$.
   Let $T: \bM_{m,n}\to\bM_{m, n}$ be a  linear map sending parallel pairs to parallel pairs.
   Let $i,j, k, r, s, t$ below be any feasible indices.
\begin{enumerate}
  \item[{\rm (a)}] If $T(E_{ij}) = T(E_{kj})=0$ then $T(E_{ir})$ and $T(E_{kr})$ are linearly dependent.
   \item[{\rm (a')}] If $T(E_{ij}) = T(E_{ik})=0$ then $T(E_{rj})$ and $T(E_{rk})$ are linearly dependent.
  \item[{\rm (b)}] If $T((E_{ii} + E_{jj})\oplus F)  =0$ for some $F\in \bM_{m-2,n-2}$ with
  $\|F\|\leq 1$, then $T(E_{ii})$ and $T(E_{jj})$ are linearly dependent.
\end{enumerate}
Suppose, in addition,  $n\geq 3$.
\begin{enumerate}
  \item[{\rm (c)}] If $T(E_{ii} + E_{jj})=0$ then $T(E_{ii})$ and $T(E_{jk})$ are linearly dependent whenever $k\neq i$.
  \item[{\rm (d)}]
   If $T(E_{ij})=0$ then $T(E_{ir})$ and $T(E_{st})$ are linearly dependent whenever $r\neq j$, $s\neq i$ and $t\neq j, r$.
 \item[{\rm (d')}] If $T(E_{ij})=0$ then $T(E_{sj})$ and $T(E_{tr})$ are linearly dependent whenever $r\neq j$, $s\neq i$ and $t\neq i, s$.
% \item[(e)] If $T(E_{ij})=T(E_{rs})=0$ then $T(E_{is})$ and $T(E_{rj})$ are linearly independent whenever $i\neq r$ and $s\neq j$.
\end{enumerate}
\end{lemma}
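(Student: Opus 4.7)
The plan is to prove each of the six assertions by constructing enough parallel pairs in $\bM_{m,n}$ involving the kernel elements given by the hypothesis so as to constrain the pair $(T(A), T(B))$ appearing in the conclusion, and then to invoke Lemma \ref{lem:par-1dim} on the span $\Span\{T(A), T(B)\}$ to force linear dependence.

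I will illustrate (a). With the hypothesis $T(E_{ij}) = T(E_{kj}) = 0$ and $i \ne k$, $j \ne r$, set $X_\mu = E_{ir} + \mu E_{kj}$ and $Y_\nu = E_{kr} + \nu E_{ij}$ for unimodular $\mu, \nu$; linearity yields $T(X_\mu) = T(E_{ir})$ and $T(Y_\nu) = T(E_{kr})$. On the $2\times 2$ block indexed by rows $\{i,k\}$ and columns $\{j,r\}$ these become
\[
\begin{pmatrix} 0 & 1 \\ \mu & 0 \end{pmatrix} \quad\text{and}\quad \begin{pmatrix} \nu & 0 \\ 0 & 1 \end{pmatrix}.
\]
Computing the trace and squared determinant modulus of $(X_\mu + \lambda Y_\nu)(X_\mu + \lambda Y_\nu)^*$ as $2(|\lambda|^2 + 1)$ and $|\lambda^2\nu - \mu|^2$ respectively, one obtains $\|X_\mu + \lambda Y_\nu\|^2 = 2 + |1 + \lambda^2\nu\bar\mu|$ for unimodular $\lambda$, which attains the maximum $4 = (\|X_\mu\| + \|Y_\nu\|)^2$ exactly when $\lambda^2 = \mu/\nu$. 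Hence $(X_\mu, Y_\nu)$ is a parallel pair, and so $(T(E_{ir}), T(E_{kr}))$ is parallel. To refine this single relation into linear dependence, I would construct, for every $(\alpha, \beta), (\gamma, \delta) \in \IF^2$, block matrices
\[
X = \begin{pmatrix} p_1 & \alpha \\ p_2 & \beta \end{pmatrix}, \qquad Y = \begin{pmatrix} q_1 & \gamma \\ q_2 & \delta \end{pmatrix}
\]
embedded in $\bM_{m,n}$ on the same $2\times 2$ block, with suitable $p_1, p_2, q_1, q_2 \in \IF$ making $(X, Y)$ a parallel pair; since the augmented off-column entries lie in $\ker T$, we have $T(X) = \alpha T(E_{ir}) + \beta T(E_{kr})$ and $T(Y) = \gamma T(E_{ir}) + \delta T(E_{kr})$, so existence of such a choice for every $(\alpha, \beta, \gamma, \delta)$ forces every two elements of $\Span\{T(E_{ir}), T(E_{kr})\}$ to be parallel. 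Lemma \ref{lem:par-1dim} then gives that this span has dimension at most one.

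Part (a') follows by transposition. Parts (b), (c), (d), (d') proceed similarly, with the kernel elements $(E_{ii} + E_{jj}) \oplus F$ for (b), $E_{ii} + E_{jj}$ for (c), and $E_{ij}$ for (d) and (d') playing the role of $\{E_{ij}, E_{kj}\}$ in (a); each requires its own tailored block-matrix computation. The main obstacle lies in the second step above: the parallel condition in $\bM_{m,n}$ is restrictive (requiring the two augmented block matrices to share a common dominant singular direction), and a careful case analysis---distinguishing the collinear case $(\alpha, \beta) \propto (\gamma, \delta)$, where a rank-one choice of $(p_1, p_2, q_1, q_2)$ suffices, from the transverse case, where one must make $X, Y$ rank-two and align their dominant singular vectors by tuning the kernel coefficients---is where the technical effort concentrates. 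The $2 \times 2$ block affords enough freedom for this alignment to succeed in each configuration, but verifying it requires the detailed spectral computations characteristic of parallel-preserver problems.
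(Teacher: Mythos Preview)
Your overall framework---show that any two elements of $\Span\{T(A),T(B)\}$ are parallel by augmenting with kernel elements, then invoke Lemma~\ref{lem:par-1dim}---is exactly the paper's strategy. But the execution has real gaps in two places.

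For (a), you correctly identify that the crux is building, for arbitrary $(\alpha,\beta),(\gamma,\delta)$, parallel $2\times2$ blocks $X,Y$ whose ``free'' column is prescribed and whose ``kernel'' column is to be chosen. You then defer this to a case analysis on collinear versus transverse $(\alpha,\beta),(\gamma,\delta)$ and an unspecified singular-vector alignment. The paper bypasses all of this with one observation: complete each column to a multiple of a unitary by taking the kernel column orthogonal to the free column with the same norm, and choose the signs so that the two resulting $2\times2$ unitaries have determinants of opposite sign. By Example~\ref{eg:parallel}(b,d) such a pair is automatically parallel (over $\IC$ any two unitaries are parallel; over $\IR$ orthogonal matrices with $\det U_1=-\det U_2$ are parallel). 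This gives the construction uniformly in $(\alpha,\beta,\gamma,\delta)$ with no case split; your preliminary computation that $T(E_{ir}),T(E_{kr})$ are themselves parallel then becomes unnecessary.

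For (b)--(d$'$), ``proceed similarly with $2\times2$ blocks'' is misleading. In (d) the kernel contains only the single rank-one matrix $E_{ij}$, so you have one scalar degree of freedom, not a whole column; and the matrices $E_{ir},E_{st},E_{ij}$ (with $r,t,j$ distinct) live in a $2\times3$ block, not $2\times2$. The paper's device is different: replace $E_{st}$ by $E_{st}+t_k E_{ij}$ with $t_k\to\infty$, so that for large $t_k$ the matrices $\alpha E_{ir}+\beta(E_{st}+t_k E_{ij})$ and $\gamma E_{ir}+\delta(E_{st}+t_k E_{ij})$ share a dominant rank-one piece and are hence parallel; then pass to the limit via Lemma~\ref{lem:limit}. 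Parts (b) and (c) likewise proceed by subtracting multiples of the kernel element $A$ (not by filling a kernel column) together with a separate case analysis and, in (c), another limit argument. Your sketch does not anticipate either the need for a third column or the limiting technique, and a $2\times2$ alignment argument alone will not close these cases.
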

\begin{proof}
  (a)  It suffices to verify   that $T(E_{11})$ and $T(E_{21})$ are linearly dependent when $T(E_{12})=T(E_{22})=0$.
  By Example \ref{eg:parallel}, any two complex unitary matrices $U_1, U_2$ are parallel,
  and any two real orthogonal matrices $U_1, U_2$
 are parallel exactly when the spectrum $\sigma(U_1^{\TT} U_2)$ contains $1$ or $-1$.
It follows that for any scalars $\alpha, \beta, \gamma, \delta$, the  $m\times n$ matrices
$$
\frac{1}{\sqrt{|\alpha|^2 +|\beta|^2}}(\alpha E_{11} + \beta E_{21} - \overline{\beta} E_{12} + \overline{\alpha} E_{22})
$$
and
$$
\frac{1}{\sqrt{|\gamma|^2 +|\delta|^2}}(\gamma E_{11} + \delta E_{21} + \overline{\delta} E_{12} - \overline{\gamma} E_{22})
$$
are parallel,  and so are their scalar multiples.  It follows that
$T(\alpha E_{11} + \beta E_{21} - \overline{\beta} E_{12}+ \overline{\alpha} E_{22}) = \alpha T(E_{11}) + \beta T(E_{21})$
and
$T(\gamma E_{11} + \delta E_{21} + \overline{\delta} E_{12} - \overline{\gamma} E_{22}) = \gamma T(E_{11}) + \delta T(E_{21})$
are parallel.
By Lemma \ref{lem:par-1dim}, $T(E_{11})$ and $T(E_{21})$ are linearly dependent.

(b)  It suffices to verify   that $T(E_{11})$ and $T(E_{22})$ are linearly dependent,
when $T(A)=0$ for
 $A=  E_{11} + E_{22} + s_3 E_{33} + \cdots +  s_m E_{mm}$ with
  $1\geq  s_3 \geq s_4 \geq    \cdots \geq s_m \geq 0$.
For any scalars $\alpha, \beta, \gamma, \delta$, consider the $m\times n$ matrices
$B=\alpha E_{11} +\beta E_{22}$ and $C=\gamma E_{11} + \delta E_{22}$.
We claim that $T(B)$ and $T(C)$ are parallel.
If $B, C$ are parallel, then it is the case.
Otherwise, we can assume that $B=E_{11} + \mu E_{22}$ and $C=\nu E_{11} + E_{22}$ with $|\mu|, |\nu|<1$.
If $|1-\mu| \geq |1+\mu|s_3$, then $B - (1+\mu)A/2$ and $C$ are parallel, and so are $T(B)=T(B - (1+\mu)A/2)$ and $T(C)$.
If $|1-\nu| \geq |1+\nu|s_3$, then $B$ and $C - (1+\nu)A/2$ are parallel, and so are $T(B)$ and $T(C)=T(C - (1+\nu)A/2)$.
If $|1-\mu| < |1+\mu|s_3$ and $|1-\nu| < |1+\nu|s_3$ then $B - (1+\mu)A/2$ and
$C - (1+\nu)A/2$ are parallel, then so are $T(B)=T(B - (1+\mu)A/2)$
and $T(C)=T(C - (1+\nu)A/2)$.
 We   see that $T(B)$ and $T(C)$ are parallel in all cases.
By Lemma \ref{lem:par-1dim}, $T(E_{11})$ and $T(E_{22})$ are linearly dependent, as asserted.

(c) It suffices to verify $T(E_{11})$ and $T(E_{23})$ are linearly dependent when
   $T(A)=0$ where $A=E_{11}+ E_{22}$, due to (b).
  Consider $B=\alpha E_{11} + \beta E_{23}$ and $C=\gamma E_{11} + \delta E_{23}$ for any scalars $\alpha, \beta, \gamma$
  and $\delta$.
  If $B$ and $C$ are parallel, then so are $T(B)$ and $T(C)$.
  If $B$ is not parallel to $C$ then we can assume that $B= E_{11} + \beta E_{23}$ and $C= \gamma E_{11} + E_{23}$
  with $|\beta|, |\gamma|<1$.
  If $\gamma\neq 0$ then
  $B$ is parallel to
  $s\gamma A+ C$ for  $s \geq (1-|\gamma|^2)/2|\gamma|^2$.
  We see that $T(B)$ is parallel to $T(s\gamma A + C) = T(C)$.
In case when $\gamma=0$, we see that $T(B)$ is parallel to
  $T(C)+\epsilon T(E_{11})= T(\epsilon E_{11}+ E_{23})$ for all $\epsilon\neq 0$.
%  In other words,
%  $$
%  \|T(B) + \mu_\epsilon(T(C)+\epsilon T(E_{11}))\| = \|T(B)\| + \|T(C)+\epsilon T(E_{11})\|
%  $$
%  for some unimodular scalar $\mu_\epsilon$.
%  Choosing a sequence
%   $\epsilon_n \to 0^+$ with $\mu_{\epsilon_n}$ converging to some unimodular $\mu$,  we see
 Letting $\epsilon\to 0$, it follows from Lemma \ref{lem:limit} that  $T(B)$ and $T(C)$ are parallel.
  Therefore, in any case $T(B)$ is parallel to $T(C)$.
  By Lemma \ref{lem:par-1dim},  $T(E_{11})$ and $T(E_{23})$ are linearly dependent.

  (d) It suffices to verify that $T(E_{12})$ and $T(E_{21})$ are linearly dependent when $T(E_{23})=0$.
  To see this,  consider  any scalars $\alpha, \beta, \gamma$
  and $\delta$.  Let $\epsilon_k \to 0^+$ such that both $\beta + \epsilon_k$ and $\delta + \epsilon_k$ are nonzero
  for $k=1,2,\ldots$.  For
  each $k$, there is a big enough $t_k>0$ such that  $\alpha E_{12} +(\beta + \epsilon_k)(E_{21}+ t_k E_{23})$
and $\gamma E_{12} +(\delta + \epsilon_k)(E_{21}+ t_k E_{23})$ are parallel, and thus so are
$T(\alpha E_{12} +  (\beta + \epsilon_k)(E_{21}+ t_k E_{23})) = \alpha T(E_{12}) + (\beta + \epsilon_k)T(E_{21})$
and
$T(\gamma E_{12} +  (\delta + \epsilon_k)(E_{21}+ t_k E_{23})) = \gamma T(E_{12}) + (\delta + \epsilon_k)T(E_{21})$.
As $k\to \infty$,  it follows from  Lemma \ref{lem:limit}
  that $\alpha T(E_{12}) +  \beta  T(E_{21})$ and $\gamma T(E_{12}) +  \delta  T(E_{21})$
are parallel.  By Lemma \ref{lem:par-1dim},  $T(E_{12})$ and $T(E_{21})$ are linearly dependent.

%(e) It suffices to verify that $T(E_{12})$ and $T(E_{21})$ are linearly dependent when $T(E_{11})=T(E_{22})=0$.
%Consider $B=\alpha E_{12} + \beta E_{21}$ and $C=\gamma E_{12} + \delta E_{21}$ for any scalars $\alpha, \beta, \gamma$
%  and $\delta$.  If $B$ is not parallel to $C$ then we can assume that $B= E_{12} + \beta E_{21}$ and $C= \gamma E_{12} + E_{21}$
%  with $|\beta|, |\gamma|<1$. If $\gamma\neq 0$, then $E_{11} + E_{12} + \beta E_{21} - \beta E_{22}$

By similar arguments, we can prove (a') and (d').
\end{proof}

\begin{proof}[Proof of Proposition \ref{prop:tea-para-not-inv} {\rm (b)}]
  Without loss of generality, we can assume that $n\geq m\geq 2$.
  Suppose that the nonzero linear map $T$ is not injective.  We need to show that the range space of $T$ has dimension one.
Let $A \in \bM_{m,n}$ have  norm one and $T(A) = 0$. We may replace $T$ by
$X \mapsto  T(UXV)$ for some suitable unitary matrices $U \in \bU_m, V \in \bU_n$, and
assume that $A = I_k\oplus A_1$ with $A_1\in \bM_{m-k, n-k}$ and $\|A_1\|< 1$.
Furthermore, we may assume that $k$ is the smallest positive integer such that $s_1(B) = \cdots = s_k(B)$
for every  nonzero matrix $B$ in the kernel of $T$.

{\bf Step 1.}   We claim that $A\neq I_2\oplus 0$.  Suppose otherwise $T(I_2\oplus 0)=0$, and thus
   $T(E_{11})=-T(E_{22})$ is nonzero.
By Lemma \ref{lem:Sp-cases}(c),
 $T(E_{11})$ and $T(E_{23})$ are linearly dependent, as well as $T(E_{22})$ and
  $T(E_{13})$ are linearly dependent.  Therefore, the linear span of $T(E_{11})= -T(E_{22}), T(E_{13}), T(E_{23})$
  has dimension  one.
By the minimality of $k=2$, $T(E_{13})\neq 0$, and thus $T(\xi E_{11} +  \eta E_{13})=0$ for some
nonzero $\xi, \eta\in\IF$.  Since the rank one matrix $B = \xi E_{11} + \eta E_{13}$
  satisfies  $s_1(B) > s_2(B)=0$, this conflicts with the minimality of $k$.

{\bf Step 2.}  We claim that $k=1$.  Suppose otherwise that
 $A=  E_{11} + E_{22} + a_3 E_{33} + \cdots +  a_m E_{mm}$ with
  $1\geq  a_3 \geq a_4 \geq    \cdots \geq a_m \geq 0$.
%By Step 1, we see that $a_3 > 0$.
It follows from Lemma \ref{lem:Sp-cases}(b) that $T(E_{11})$ and $T(E_{22})$ are linearly dependent.
Consequently, there is a nontrivial linear combination $\xi E_{11} + \eta E_{22}$ belongs to the kernel of $T$.
The minimality assumption on $k$ ensures that $k=2$, and we can further assume $|\xi|=|\eta|=1$.
 Replacing $T$ by the map $X\mapsto T(UXV)$ for some
suitable $U\in \bU_m$ and $V\in \bV_n$,
we can assume that $T(E_{11} + E_{22})=0$. By  Step 1, we get a contradiction.

{\bf Step 3.}  It follows from Step 2 that
$$
A= [1] \oplus A_1\quad\text{with}\quad \|A_1\|< 1.
$$
Let
$B= [0]  \oplus B_1$ and $C=[0] \oplus C_1$
with $B_1, C_1\in \bM_{m-1,n-1}$.
For $\xi\in \IR$ with sufficiently large magnitude,
$\xi A+B$ and $\xi A +C$  are parallel,
and so are $T(\xi A + B) = T(B)$ and $T(\xi A +C) = T(C)$.
It follows from Lemma \ref{lem:par-1dim} that the space
$$
 \bV_1=
\{T(B): B = [0] \oplus B_1\in \bM_{m,n}\} = \mbox{Span}(K)
$$
for some $K\in \bM_{m,n}$.

{\bf Step 4.}
Let $[0]\oplus S_1\in\bM_{n, m}$ define the linear functional such that
$$
T(B) = \tr(S_1 B_1) K\quad\text{whenever}\ B = [0] \oplus B_1\in \bM_{m,n}.
$$
After replacing $T$ by the map  $X\mapsto T([1]\oplus U_1)X([1] \oplus V_1)$ for some unitary matrices $U_1\in \bU_{m-1}$
and $V_1\in \bU_{n-1}$, we can assume $[0]\oplus S_1 = \lambda_{22} E_{22} +  \cdots + \lambda_{mm} E_{mm}$
for some (maybe zero)
scalars $\lambda_{22}, \ldots, \lambda_{mm}$.   Thus,
$$
T(E_{ij})=0\quad\text{whenever  $i\neq j$ and $i,j\geq 2$.}
$$
We also know that
$$
T(E_{kk}) = \lambda_{kk} K, \quad\text{for $k=2, \ldots, m$}.
$$

\textbf{Step 5.}
%We claim that   the range of $T$ has dimension at most one.
%$T(E_{ij})=\lambda_{ij} K$ with some scalars $\lambda_{ij}$ for $i=1,\ldots, m$ and $j=1,\ldots, n$.
By Step 4,
it remains to check those $T(E_{ij})$ with $i=1$ or $j=1$.
%
%Since $T(E_{23})=0$, we  have $T(E_{12})$ and $T(E_{21})$ are linearly dependent by Lemma \ref{lem:Sp-cases}(d).
%In general, the condition
%$T(E_{ij})=0$ will ensure  that $T(E_{ik})$ and $T(E_{rs})$ are linearly dependent whenever $k\neq j$, $r\neq i$
%and $s\neq j, k$,
%as well as $T(E_{tj})$ and $T(E_{uv})$ are linearly dependent whenever $t\neq i$, $v\neq j$ and $u\neq i, t$.
Since $T(E_{23})=0$, by Lemma \ref{lem:Sp-cases}(d, d') we
   have
\begin{align}
T(E_{1j}) \quad &\text{and}\quad T(E_{21})\quad\text{are linearly dependent when $j\neq 1,3$,} \label{eq:1j21}\\
T(E_{13}) \quad &\text{and}\quad T(E_{i1})\quad\text{are linearly dependent when $i\neq 1, 2$.} \label{eq:13i1}
\end{align}
We also know that
\begin{align}\label{eq:1ji1kk}
\dim \Span \{T(E_{1j}),  T(E_{i1}), T(E_{kk}): i\neq 2,  j\neq 3, k\neq 2, 3\}\leq 1,
\end{align}
 by an argument with $A$ replaced by $E_{23}$ as in
 Step 3.

\textbf{Case 5a.}  Suppose first that $T(E_{11}) = - T([0]\oplus A_1) = \lambda_{11} K\neq 0$, due to $T([0]\oplus A_1)\in \bV_1$.
It follows from \eqref{eq:1ji1kk} that  $ T(E_{ij})  = \lambda_{ij}K$
 for some scalar $\lambda_{ij}$, whenever $i\neq 2$ and $j\neq 3$.
 We need to check $T(E_{21})$ and $T(E_{13})$.

\indent\textbf{Subcase 5a.i.}
%If $T(E_{kk})=\lambda_{kk}K\neq 0$ for some $k\geq 3$, then arguing as in Subcase 5aa we obtain our assertion as well.
%Suppose $T(E_{kk})= 0$ for $k\geq 2$.
Suppose that $T(E_{22})= 0$.
Arguing as in Step 3 with $A$ replaced by $E_{22}$, we have
$T(E_{1j})=\lambda_{1j} K$ and $T(E_{i1})=\lambda_{i1} K$ with some scalars $\lambda_{1j}$ and $\lambda_{i1}$
for $j\neq 2$ and $i\neq 2$.  It remains to check $T(E_{21})$.

If $T(E_{12})=\lambda_{12}K\neq 0$, then by \eqref{eq:1j21}, $T(E_{21})=\lambda_{21}K$.
Suppose that
$T(E_{12})  =  0$.  It follows
from Lemma \ref{lem:Sp-cases}(a), $T(E_{21})$ linearly depends on $T(E_{11})$ and thus assumes
the form $\lambda_{21} K$ for some scalar $\lambda_{21}$.
%Hence all $T(E_{ij}) = \lambda_{ij} K$, as claimed.

\indent\textbf{Subcase 5a.ii.}
  Suppose that $T(E_{22})= \lambda_{22}K \neq 0$.

If $T(E_{12})=\lambda_{12} K\neq 0$, then $T(E_{21})=\lambda_{21} K$ by \eqref{eq:1j21}.
Replacing $T$ by a map $X\mapsto T(XV)$ with some suitable unitary $V=V_2\oplus I_{n-2}\in \bU_n$, we can assume that $T(E_{22})=0$.
In this new setting,
$T(E_{ij}) = \lambda'_{ij} K$ for $i,j=1,2$, and at least one of them is nonzero.
Note that both $T(E_{23})=0$ and $T(E_{13})$ assume  their original values.
By Lemma \ref{lem:Sp-cases}(a'), $T(E_{22})=T(E_{23})=0$ implies that $T(E_{12})$ and $T(E_{13})$ are linearly dependent.
It follows from Lemma \ref{lem:Sp-cases}(d) and $T(E_{22})=0$ that $T(E_{21})$ and $T(E_{13})$ are linearly dependent.
By Step 3 with $A$ replaced by $E_{22}$, we see that $T(E_{11})$ and $T(E_{13})$ are linearly dependent.
Consequently, $T(E_{13})=\lambda_{13}K$.

Now assume $T(E_{12})=0$.
%If $T(E_{21})=\lambda_{21}K\neq 0$ for some scalar $\lambda_{21}$ then $T(E_{13})$ assumes the form $\lambda_{13})K$ as well.
%Suppose that $T(E_{21})$ does not assume such form, or equivalently, is independent from $T(E_{22})$.
Consider the map $T_\theta(X) = T(XV_\theta)$ with the unitary matrix
$V_\theta =\begin{pmatrix} \cos\theta & - \sin \theta \cr \sin \theta & \cos\theta\cr\end{pmatrix} \oplus I_{n-2}\in\bU_n$
for some $\theta\in (0,\pi/2)$ such that
\begin{align*}
T_\theta(E_{11}) &= \cos\theta T(E_{11}) - \sin\theta T(E_{12})= \cos\theta \lambda_{11}K \neq 0, \\
T_\theta(E_{12})  &= \sin \theta T(E_{11}) + \cos\theta T(E_{12}) = \sin \theta \lambda_{11}K\neq 0, \quad
T_\theta(E_{13}) = T(E_{13}),\\
T_\theta(E_{21}) &= \cos\theta T(E_{21}) - \sin\theta T(E_{22}),  \\
T_\theta(E_{22}) &= \sin\theta T(E_{21}) + \cos\theta T(E_{22})\neq 0, \quad
T_\theta(E_{23}) = T(E_{23})= 0.
\end{align*}
Note that   $T_\theta$ also preserves parallel pairs.  In particular, $T_\theta(E_{21})$ and
$T_\theta(E_{12})= \sin \theta \lambda_{11}K$ are linearly dependent by \eqref{eq:1j21}, and thus
 $T_\theta(E_{21})=\lambda'_{21} K$ for some scalar $\lambda'_{21}$.
 Consequently, $T(E_{21})= (T_\theta(E_{21}) + \sin\theta T(E_{22}))/\sin\theta = (\lambda'_{21} K+ \sin\theta \lambda_{22}K)/\sin\theta
 =\lambda_{21}K$ for some scalar $\lambda_{21}$.  Therefore,
 $T_\theta(E_{22})= \sin\theta T(E_{21}) + \cos\theta T(E_{22}) =\lambda'_{22}K\neq 0$ for some scalar $\lambda'_{22}$.
It then follows from the argument in the previous paragraph
 that $T_\theta(E_{13})= T(E_{13})=\lambda_{13}K$.

\medskip

\indent\textbf{Case 5b.}  Suppose that any $T(E_{jj}) = \lambda_{jj}K\neq 0$.
Replacing $T$ by a linear map $A\mapsto T(UAV)$ for some permutation matrices
$U\in \bU_m$ and $V\in \bU_n$, we can return to the Case 5a in which $T(E_{11})=\lambda_{11}K\neq 0$.

\indent\textbf{Case 5c.}  Suppose that all $T(E_{jj})=0$.  In this case, all $T(E_{1j})$ and $T(E_{i1})$ with all feasible $i,j$ are linearly
dependent, and thus the range space of the nonzero linear map $T$ has dimension  one.

The proof is complete.
  \end{proof}

%\section{Proofs of Theorems \ref{thm-parallel-C} and \ref{thm-parallel-R}}\label{S:2x2}
%\section{Proofs of Theorems for The case $(m,n)= (2,2)$}\label{S:2x2}

\section{The case $(m,n)= (2,2)$}\label{S:2x2}
\setcounter{equation}{0}

In this section, we present the proofs of Theorems \ref{thm-parallel-C} and \ref{thm-parallel-R}.
% which describe completely the structure of linear maps on $\bM_2(\IF)$  preserving parallel pairs and TEA pairs.
First we present some notations and remarks.

Let $\bH_2$ (resp.\ $\bH_2^0$) be the real linear space of all $2\times 2$ Hermitian matrices (resp.\ with zero trace).
Let
$$
I_2 = \begin{pmatrix} 1 & 0 \cr 0 & 1\cr\end{pmatrix}, \
\mathcal{C}_1 = \mathcal{R}_1 = \begin{pmatrix} 1 & 0 \cr 0 & -1\cr\end{pmatrix}, \
\mathcal{C}_2 = \mathcal{R}_2 = \begin{pmatrix} 0 & 1 \cr 1 & 0\cr\end{pmatrix}, \
\mathcal{C}_3 = \begin{pmatrix} 0 & -i \cr i & 0 \cr\end{pmatrix}, \
\mathcal{R}_3 = \begin{pmatrix} 0 & 1 \cr -1 & 0 \cr\end{pmatrix}.
$$
Recall that $\bC = \{I_2/\sqrt 2, \mathcal{C}_1/\sqrt 2, \mathcal{C}_2/\sqrt 2, \mathcal{C}_3/\sqrt 2\}$
is an orthonormal basis of $\bM_2(\IC)$, while
$\bC_0 = \{ \mathcal{C}_1/\sqrt 2, \mathcal{C}_2/\sqrt 2, \mathcal{C}_3/\sqrt 2\}$ is an orthonormal basis of $\bH_2^0$,
 with respect to the inner product
$\langle A,B\rangle=\operatorname{trace}(B^*A)$.
On the other hand,
 $\bR=\{I_2/\sqrt 2, \mathcal{R}_1/\sqrt 2, \mathcal{R}_2/\sqrt 2, \mathcal{R}_3/\sqrt 2\}$
is an orthonormal basis of $\bM_2(\IR)$  with respect to the inner product $\langle A, B\rangle = \operatorname{trace}(B^{\TT} A)$.

\begin{remark}[Remarks on Theorem \ref{thm-parallel-C}]\label{rem:PC}
  \begin{enumerate}[{\rm (1)}]
\item  TEA preservers on $\bM_2(\IC)$ have the same form as those
of $\bM_{n}(\IC)$ for $n\ge 3$, whereas parallel preservers on $\bM_2(\IC)$ may have different forms.

%\item
% TEA preservers are parallel preservers.  Indeed,
%if the matrix $[\tilde T]_\bC$ in \eqref{eq:(b.2)} equals $\gamma I$ with $\gamma > 0$, then $T$ has the form in (a).

\item Linear preservers of parallel pairs in (b.2) form a semigroup of operators.
         Each map $T$ in the semigroup is a composition of two maps of the form $X \mapsto UXV$ with $U, V \in \bU_2$,
             and a map of the form  $\tilde T$ described in \eqref{eq:(b.2)}.
The map $T$ is invertible if and only if  $\tilde T$ is.
Moreover,  every singular map in (b.2) is a limit of invertible maps in the semigroup.

\item There are maps of the form (b.1), which do not arise from (b.2).
For example, take $T(A) = (\tr A)E_{11}$.  Assume there are  unitary matrices $U_1, U_2, V_1, V_2\in \bU_2(\IC)$
such that the matrix representation of the linear map $\tilde{T}(A)= U_1T(U_2 AV_2)V_1$ assumes the form given in \eqref{eq:(b.2)}.
Note that the range of this map is spanned by the  matrix $uv^*$, where $u$ is the first column of $U_1$
and $v$ is the first column of $V_1^*$.  Since the range space is one dimensional, exactly one column of $\tilde{T}$ is nonzero.
But then the range space is spanned by exactly one of the four unitary matrices $I_2, \mathcal{C}_1, \mathcal{C}_2$ and $\mathcal{C}_3$, while the  matrix $uv^*$ has rank one.

\end{enumerate}
\end{remark}

\begin{remark}[Remarks on Theorem \ref{thm-parallel-R}]\label{rem:PR}
\begin{enumerate}[{\rm (1)}]

\item Both TEA and parallel preservers on $\bM_2(\IR)$
have more complicated structure compared with those on $\bM_n(\IR)$ for $n \ge 3$.
Of course, TEA preservers are parallel preservers. The only additional maps are those
 described in (b).

\item In (a.1), if $\{Z_1, Z_2\}$ is linearly dependent,
then $T$ has the form $A \mapsto \tr(FA)Z$ for some orthogonal matrix $F\in \bM_2$. However,
a general map of such form may not preserve TEA pairs unless $F=\alpha F_1 + \beta F_1(E_{12}-E_{21})$
for some  orthogonal matrix $F_1$ and scalars $\alpha, \beta$.

\item The maps in (a.2) form a semigroup of operators generated by invertible operators of the
form  $X \mapsto UXV$, and diagonal  operators $\tilde T$.
Each map in the semigroup is a composition of two maps of the form $X \mapsto UXV$,
             and a diagonal map $\tilde T$ arising from \eqref{eq:(a.2)}.
A map is invertible if and only if the map $\tilde T$ is. Moreover,
every singular map in (a.2) is a limit of invertible maps in the semigroup.

\item  There are maps carrying the form in (a.1),
which do not arise from (a.2).
For example, take $T(A) = (\tr A)E_{11}$ or $T(A) = (\tr A)E_{11} +(\tr (E_{12}-E_{21})A)E_{12}$.
\end{enumerate}
\end{remark}

We note that our proofs of the theorems
are computational, and utilize some techniques in matrix groups. We will first present some
auxiliary results in subsection \ref{Auxiliary results}, and then give the proofs of the theorems in the next two subsections.

\subsection{Auxiliary results}\label{Auxiliary results}

 In general, for a given parallel preserver $T$
 it may not be easy to find unitary matrices $U_1, U_2, V_1, V_2$
so that the modified map $\tilde T$ has the form \eqref{eq:(b.2)} in Theorem \ref{thm-parallel-C}(b.2).
By Proposition \ref{main3} below,  $T$ satisfies (b.2) if and only if there are
$U_3, U_4, V_3, V_4 \in \bU_2$ such that the map $\hat{T}$ defined by $A\mapsto U_3 T(U_4AV_4)V_3$
has a matrix representation with respect to the basis $\bC$ as
\begin{equation}\label{R-matrix}
\ \hskip .7 in [\hat{T}]_{\bC} =\begin{pmatrix}
d_0 & \begin{matrix}\alpha_1 i & \alpha_2 i & \alpha_3 i \end{matrix}\cr
\begin{matrix} 0 \\ 0 \\ 0\end{matrix} & \text{\Large $R$}   \cr\end{pmatrix}\in  \bM_4(\IC),
\end{equation}
for real numbers $d_0$, $\alpha_1$, $\alpha_2$, $\alpha_3$ and $R \in \bM_3(\IR)$.

Similarly, by Proposition \ref{main3},  $T$ satisfies (a.2) in Theorem \ref{thm-parallel-R} if and only if there are
$U_3, U_4, V_3, V_4 \in \bU_2$ such that the map $\hat{T}$ defined by $A\mapsto U_3 T(U_4AV_4)V_3$
has a matrix representation with respect to the basis $\bR$ as
\begin{equation}\label{S-matrix}
 [\hat{T}]_{\bR}=\begin{pmatrix}
d_0 & 0& 0  & \alpha_3\cr
0 & d_{11} & d_{12}& 0 \cr
0 & d_{21} & d_{22}& 0\cr
0 & 0 & 0 & d_{3} \cr
\end{pmatrix}\in \bM_4(\IR).
\end{equation}

\begin{proposition} \label{main3} Let $T: \bM_2(\IF) \rightarrow \bM_2(\IF)$ be a nonzero linear map.
\begin{itemize}
\item[(a)]  $T$ has the form in Theorem \ref{thm-parallel-C}{\rm (b.2)}   if and only if there are $U_3, U_4, V_3, V_4 \in \bU_2(\IC)$ such that the map $\hat{T}$ defined by $A\mapsto U_3 T(U_4AV_4)V_3$ has the form {\rm (\ref{R-matrix})}.

\item[(b)] $T$ has the form in Theorem \ref{thm-parallel-R}{\rm (a.2)}    if and only if there are $U_3, U_4, V_3, V_4 \in \bU_2(\IR)$ such that the map $\hat{T}$ defined by $A\mapsto U_3 T(U_4AV_4)V_3$ has the form {\rm (\ref{S-matrix})}.
\end{itemize}
\end{proposition}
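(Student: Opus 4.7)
The \emph{only if} direction in both parts is immediate: the matrix shape in \eqref{eq:(b.2)} is the special case of \eqref{R-matrix} with $R$ diagonal, and the shape in \eqref{eq:(a.2)} is the special case of \eqref{S-matrix} with the middle $2\times 2$ block diagonal and $\alpha_3 = 0$. So the real content is the ``if'' direction: starting from $\hat T$ in the more relaxed form, I must arrange further pre- and post-multiplications by unitary matrices that diagonalize the central block (and, in the real case, simultaneously kill the entry $\alpha_3$).

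For part (a), my plan is to reduce to simultaneous conjugations on both sides of $\hat T$. Since $T(X) = U_3^* \hat T(U_4^* X V_4^*) V_3^*$, taking $U_1 = ZU_3$, $V_1 = V_3 Z^*$, $U_2 = U_4 Y$, $V_2 = Y^* V_4$ for $Y, Z \in \bU_2(\IC)$ produces $\tilde T(A) := U_1 T(U_2 A V_2) V_1 = Z\,\hat T(YAY^*)\,Z^*$. In the basis $\bC$, the conjugation $X \mapsto W X W^*$ has matrix $1 \oplus \rho(W)$, where $\rho\colon SU(2) \to SO(3)$ is the standard surjective double cover arising from the adjoint action on traceless Hermitian matrices (the center of $\bU_2(\IC)$ acts trivially, so it does not matter whether $W$ is chosen unit determinant). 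Hence
\[
[\tilde T]_{\bC} = (1 \oplus \rho(Z))\,[\hat T]_{\bC}\,(1 \oplus \rho(Y)) = \begin{pmatrix} d_0 & i\,\alpha^{\TT}\rho(Y) \\ 0 & \rho(Z)\,R\,\rho(Y) \end{pmatrix}.
\]
What remains is a purely real claim: for every $R \in \bM_3(\IR)$ there is a factorization $R = P \Lambda Q$ with $P, Q \in SO(3)$ and $\Lambda$ diagonal (with possibly negative entries), obtained from the usual SVD by absorbing the sign of an orthogonal factor of determinant $-1$ into a single diagonal entry via a reflection $J \in O(3)$. Choosing $\rho(Z) = P^{\TT}$ and $\rho(Y) = Q^{\TT}$ (possible by surjectivity of $\rho$) gives $\rho(Z) R \rho(Y) = \Lambda$, while the top row $i\,\alpha^{\TT} Q^{\TT}$ is still $i$ times a real vector. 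So $[\tilde T]_{\bC}$ is in form (b.2).

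For part (b), I would exploit the orthogonal decomposition $\bM_2(\IR) = \Span\{I_2, \mathcal{R}_3\} \oplus \Span\{\mathcal{R}_1, \mathcal{R}_2\}$, which is exactly what the shape \eqref{S-matrix} preserves. A direct computation, using that $SO(2)$ is abelian together with the identities $\mathrm{rot}(\alpha)\mathcal{R}_1 = \mathcal{R}_1\mathrm{rot}(-\alpha)$ and $\mathcal{R}_3\,\mathrm{rot}(\theta) = \mathrm{rot}(\theta - \pi/2)$, shows that for $U = \mathrm{rot}(\alpha)$, $V = \mathrm{rot}(\beta)$ in $SO(2)$, the map $A \mapsto UAV$ also preserves this decomposition: it acts on $\Span\{I_2, \mathcal{R}_3\}$ as the reflection with matrix $\bigl(\begin{smallmatrix} \cos(\alpha+\beta) & \sin(\alpha+\beta) \\ \sin(\alpha+\beta) & -\cos(\alpha+\beta) \end{smallmatrix}\bigr)$, and on $\Span\{\mathcal{R}_1, \mathcal{R}_2\}$ as the rotation by $\alpha - \beta$. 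Since $(\alpha + \beta)$ and $(\alpha - \beta)$ are independent real parameters, pre- and post-composition of $\hat T$ by such maps delivers four independently tunable angles, controlling independently a pair of reflections on the first block and a pair of rotations on the second.

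It then suffices to diagonalize the upper-triangular $2\times 2$ block $M = \bigl(\begin{smallmatrix} d_0 & \alpha_3 \\ 0 & d_3 \end{smallmatrix}\bigr)$ by sandwiching it between reflections, and to diagonalize the block $D = (d_{ij})$ by sandwiching it between rotations. Both operations are straightforward consequences of the real SVD: any real $2\times 2$ matrix can be written as $R_1 \Lambda R_2$ with $R_1, R_2$ reflections and $\Lambda$ diagonal, or as $\mathrm{Rot}_1 \Lambda \mathrm{Rot}_2$ with $\mathrm{Rot}_1, \mathrm{Rot}_2 \in SO(2)$ and $\Lambda$ diagonal, after absorbing $\det = \pm 1$ signs into $\Lambda$ as needed. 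Performing both diagonalizations simultaneously via the four independent angle parameters makes $[\tilde T]_{\bR}$ fully diagonal, which is form (a.2). The main technical hurdle I anticipate is the careful bookkeeping of how $A \mapsto UAV$ acts on each basis block of $\bR$, together with the sign manipulations required to keep the diagonalizing factors inside the correct component of $O(2)$ (respectively $O(3)$); once these are settled, the rest is routine.
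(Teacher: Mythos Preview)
Your approach to part (a) is essentially the paper's: both reduce to conjugations $A\mapsto WAW^*$, observe that on $\bH_2^0$ this is the adjoint action landing in $SO(3)$, and then use a signed SVD $R=P\Lambda Q$ with $P,Q\in SO(3)$ to diagonalize the lower $3\times 3$ block. The only difference is that the paper spells out the surjectivity of $\rho$ via an explicit Euler-angle factorization, while you invoke the double cover as known.

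Your part (b) is correct in spirit but takes a genuinely different route from the paper, and there is one computational slip to flag. You claim that $A\mapsto \mathrm{rot}(\alpha)\,A\,\mathrm{rot}(\beta)$ acts on $\Span\{I_2,\mathcal{R}_3\}$ as a \emph{reflection}; in fact it acts as a \emph{rotation} (by $-(\alpha+\beta)$ in the $\{I_2,\mathcal{R}_3\}$ coordinates), since $\mathcal{R}_3=\mathrm{rot}(-\pi/2)$ and $SO(2)$ is closed under multiplication. This does not damage your argument: the signed SVD you invoke works equally well with rotations on both sides, so both the $\{I_2,\mathcal{R}_3\}$ block $M$ and the $\{\mathcal{R}_1,\mathcal{R}_2\}$ block $D$ can be diagonalized by rotations, and the four angle parameters $\alpha_1+\beta_1,\alpha_1-\beta_1,\alpha_2+\beta_2,\alpha_2-\beta_2$ are indeed independent. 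So after correcting ``reflection'' to ``rotation'', your one-step simultaneous diagonalization goes through.

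By contrast, the paper's proof of (b) restricts first to \emph{conjugations} $A\mapsto W_jAW_j^{\TT}$ (so $\beta=-\alpha$), which act trivially on $\Span\{I_2,\mathcal{R}_3\}$ and only diagonalize the middle block. It then inserts a block-swap $L_2(A)=\mathcal{R}_2 L_1(A\mathcal{R}_2)$ that moves the surviving off-diagonal entry $\alpha_3$ into the $\{\mathcal{R}_1,\mathcal{R}_2\}$ block, and diagonalizes again. Your observation that two-sided multiplication by independent rotations already decouples the four angles lets you bypass this swap and do everything in a single pass; the paper's approach is more hands-on but arrives at the same place.
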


\begin{proof}%[Proof of Proposition \ref{main3}]
We just need to verify the sufficiencies.

(a)
Let  $\IF = \IC$.  Using the linear isometric isomorphism which
\begin{quote}
sends
$A = a_1 \mathcal{C}_1/\sqrt 2 + a_2 \mathcal{C}_2/\sqrt 2 + a_3 \mathcal{C}_3/\sqrt 2 \in \bH_2^0$\quad to\quad
$v(A) = (a_1,a_2,a_3)^{\TT}\in \IR^3$,
\end{quote}
we get a one-one correspondence between
the automorphism $L_W: A \mapsto W^*AW$ of  $\bH_2^0$ for a unitary $W\in \bU_2(\IC)$ and an orthogonal matrix
$P_W\in \bU_3(\IR)$ with   determinant one such that
\begin{quote}
$L_W(A)=B$\quad if and only if \quad $P_W v(A) = v(B)$ for any $A\in\bH_2^0$.
\end{quote}

Note  that
 $L_W = L_{\mu W}$ for any $\mu \in \IC$ with $|\mu| = 1$.
We may assume that $W$ has a nonnegative $(1,1)$ entry and hence
$W = W_1 W_2 W_3$ with
 $W_1 = \begin{pmatrix} 1 & 0 \cr  0 & e^{i\phi_1}\cr\end{pmatrix}, W_2 =
\begin{pmatrix} \cos\phi_2 &  \sin \phi_2 \cr - \sin\phi_2 & \cos\phi_2 \cr
\end{pmatrix},$
and $W_3 = \begin{pmatrix} 1 & 0 \cr  0 & e^{i\phi_3}\cr\end{pmatrix}$.
(Here, if $W = (w_{ij})$, just choose $\phi_1, \phi_3$ such that
$e^{-i\phi_1} w_{21} \le 0$ and $e^{-i\phi_3} w_{12} \ge 0$.)
Then $L_W = L_{W_3} \circ L_{W_2} \circ L_{W_1}$.
The matrices corresponding to the transformations $L_{W_1}, L_{W_2}, L_{W_3}$ are the orthogonal matrices
$$P_{W_1} = [1] \oplus \begin{pmatrix}  \cos\phi_1 & \sin \phi_1 \cr
 -\sin \phi_1 & \cos \phi_1 \cr\end{pmatrix}, \
P_{W_2} = \begin{pmatrix} \cos 2\phi_2 & -\sin 2\phi_2 \cr
\sin 2\phi_2 & \cos 2\phi_2 \cr \end{pmatrix} \oplus [1], \
P_{W_3} = [1] \oplus \begin{pmatrix} \cos \phi_3 & \sin \phi_3 \cr
-\sin \phi_3 & \cos \phi_3 \cr\end{pmatrix},$$
respectively.
Thus, the map $L_W$ corresponds to the orthogonal matrix
$P_W = P_{W_3} P_{W_2} P_{W_1}$.

On the other hand,  any $P \in \bU_3(\IR)$ with positive determinant
can be written as the product of $P = P_{W_3} P_{W_2} P_{W_1}$ as above.
To see this, we may choose
suitable $\phi_3$ to construct
$P_{W_3}$ so that the first column of $P_{W_3}^{\TT}P$ has the form
$(\cos\theta, \sin \theta, 0)^{\TT}$. (Here, if $P = (p_{ij})$, we can choose
$(\cos \phi_3, -\sin \phi_3)$ to be a positive multiple of $(p_{21}, p_{31})$.)
Let $\phi_2 = \theta/2$ so that
$P_{W_2}^{\TT}P_{W_3}^{\TT} P = [1] \oplus Q$ with $Q \in \bU_2(\IR)$ with positive
determinant. Let $P_{W_1} = [1] \oplus Q$. Thus,
$P = P_{W_3} P_{W_2} P_{W_1} = P_W$, where $W = W_1W_2W_3$.

Suppose $\hat{T}$ is given by (\ref{R-matrix}).
By the singular value decomposition,
there are $Q_1,  Q_2 \in \bU_3(\IR)$ with $\det(Q_1) = \det(Q_2) = 1$
such that $R = Q_1DQ_2$ for a diagonal matrix $D = \diag(d_1, d_2, d_3)$
such that  $\det(R) = d_1d_2 d_3$.
Suppose $Q_1$ and $Q_2$ correspond to the maps
$X \mapsto P_1^*XP_1$ and $X \mapsto P_2^*XP_2$ with $P_1,P_2 \in \bU_2$.
Let $\Gamma = (\gamma_1, \gamma_2, \gamma_3)
= (\alpha_1, \alpha_2, \alpha_3)Q_2^{-1}$.
Then
$$[\hat{T}]_{\bC}=\begin{pmatrix} 1 & \cr & Q_1\end{pmatrix}
\begin{pmatrix} d_0 &   i\Gamma\cr
0 & D\cr\end{pmatrix}
\begin{pmatrix} 1 & \cr & Q_2\end{pmatrix},$$
which corresponds to the map $A \mapsto P_1^*(\tau(P_2^*AP_2))P_1$,
where $\tau$ is given by \eqref{eq:(b.2)}. Let
 $U_1=P_1U_3$, $U_2=U_4P_2$, $V_1=V_3P_1^*$ and $V_2=P_2^*V_4$. Then $\tilde T=\tau$ has the form
 \eqref{eq:(b.2)} in Theorem \ref{thm-parallel-C} .

(b)
Let $\IF = \IR$.
Suppose $\hat{T}$ has the form (\ref{S-matrix}).
By the singular value decomposition,  there are
$Q_j = \cos t_j I_2 + \sin t_j (E_{21}-E_{12}) \in \bU_2(\IR)$ for
$j = 1,2$ such that
$Q_2 \begin{pmatrix}
d_{11} &  d_{12} \cr d_{21} &  d_{22}\cr\end{pmatrix} Q_1 =
 \diag(\xi_1, \xi_2)$, where $\xi_1, \xi_2 \in \IR$ with $\xi_1 \xi_2 =
 d_{11}d_{22}-d_{12}d_{21}$.
Suppose $W_j = \cos (t_j/2) I_2 + \sin(t_j/2) (E_{12}-E_{21})$ for
$j = 1,2$.
Then, with respect to the basis $\bR$ of $\bM_2(\IR)$,  the matrix
representation of the
map $A \mapsto W_j A W_j^{\TT}$ has the form $[1] \oplus Q_j \oplus [1]$.
Hence, the map $L_1$ defined by $A \mapsto W_2 \hat T(W_1 A W_1^{\TT}) W_2^{\TT}$
 has the form
$$
[L_1]_{\bR}=
\begin{pmatrix}
d_1 & 0 & 0 & \alpha_3 \cr
0 & \xi_1 & 0 & 0 \cr
0 & 0   & \xi_2 & 0\cr
0 & 0 & 0 & d_3  \cr\end{pmatrix}.
$$
In other words,
$$
L_1(I_2) =  d_0 I_2, \quad
L_1(\mathcal{R}_1) = \xi_1\mathcal{R}_1, \quad  L_1(\mathcal{R}_2) = \xi_2\mathcal{R}_2, \quad L_1(\mathcal{R}_3) = \alpha_3 I_2 + d_3\mathcal{R}_3.
$$
Now, consider $L_2$ defined by $A \mapsto \mathcal{R}_2L_1(A\mathcal{R}_2)$.
Then
\begin{gather*}
L_2(I_2) = \xi_2 I_2, \quad
L_2(\mathcal{R}_1) = \mathcal{R}_2L_1(\mathcal{R}_3) = \alpha_3 \mathcal{R}_2 - d_2\mathcal{R}_1, \\
L_2(\mathcal{R}_2) = \mathcal{R}_2 L_1(I_2) =  d_0 \mathcal{R}_2, \quad
L_2(\mathcal{R}_3)= \mathcal{R}_2 L_1(\mathcal{R}_1) = -\xi_1 \mathcal{R}_3,
\end{gather*}
and thus
$$
[L_2]_{\bR}=\begin{pmatrix}
\xi_2 & 0        & 0   &0 \cr
0     & -d_2     & 0   & 0 \cr
0     & \alpha_3 & d_0 & 0 \cr
0     & 0        & 0   & -\xi_1\cr
\end{pmatrix}.
$$
 Let $Q_j = \begin{pmatrix}\cos t_j & - \sin t_j \\ \sin t_j & \cos t_j\end{pmatrix}$
for $j = 3,4$ such that
$Q_4 \begin{pmatrix} -d_2     & 0  \cr \alpha_3 & d_0\cr\end{pmatrix} Q_3
= \begin{pmatrix}\xi_3&\\ & \xi_4\end{pmatrix}$ with $\xi_3\xi_4 = -d_0d_2$.
Let
$$
 W_j = \begin{pmatrix} \cos (t_j/2) & - \sin (t_j/2) \\ \sin (t_j/2) & \cos (t_j/2)\end{pmatrix}, \quad j  = 3,4.
$$
Then the map
$L_3(A) = W_4 L_2(W_3AW_3^{\TT}) W_4^{\TT}$ satisfies
$$
L_3(I_2) = \xi_2 I_2,  \quad L_3(\mathcal{R}_1) = \xi_3 \mathcal{R}_1,
\quad L_3(\mathcal{R}_2) = \xi_4 \mathcal{R}_2, \quad L_3(\mathcal{R}_3) = -\xi_1 \mathcal{R}_3,
$$
and thus
$$
[L_3]_{\bR}=\begin{pmatrix}
\xi_2 & 0        & 0   &0 \cr
0     & \xi_3     & 0   & 0 \cr
0     & 0 & \xi_4 & 0 \cr
0     & 0        & 0   & -\xi_1\cr
\end{pmatrix}.
$$
%Let $T(I)/\gamma = \hat V$. Then
Observe that
\begin{align*}
L_3(A) &=
W_4 L_2(W_3AW_3^{\TT}) W_4^{\TT}
= W_4 \mathcal{R}_2 L_1(W_3 AW_3^{\TT} \mathcal{R}_2 )W_4^{\TT} = W_4 \mathcal{R}_2 W_2 \hat T (W_1 W_3 A W_3^{\TT} \mathcal{R}_2 W_1^{\TT})W_2^{\TT} W_4^{\TT} \\
&=  W_4 \mathcal{R}_2 W_2 U_3T (U_4W_1 W_3 A W_3^{\TT} \mathcal{R}_2 W_1^{\TT}V_4)V_3 W_2^{\TT} W_4^{\TT}.
\end{align*}
With $U_1 = W_4 \mathcal{R}_2 W_2 U_3$, $U_2= U_4W_1 W_3$, $V_1 = V_3 W_2^{\TT} W_4^{\TT}$ and $V_2= W_3^{\TT} \mathcal{R}_2 W_1^{\TT}V_4$,
the map $\tilde T =L_3 $ sending $A$ to $U_1T(U_2AV_2)V_1$ has the form \eqref{eq:(a.2)} in Theorem \ref{thm-parallel-R}.
\end{proof}

It is worthwhile to mention that the proof of the complex case in Proposition \ref{main3} uses the fact
that the quotient group $\bU_2(\IC)/\bU_1(\IC)$ is isomorphic to the group of matrices
in $\bU_3(\IR)$ with positive determinant. However, such an isomorphism does not exist in the real case.

In the following, we show that the maps given by   Theorem \ref{thm-parallel-C}(b.2) and   Theorem \ref{thm-parallel-R}(a.2)
preserve parallel pairs.  Note that a map of the form $A \mapsto \gamma UAV$ will preserve parallel pairs, and a
 composition of parallel pair
preserves is a parallel pair preserver. Therefore, we may assume $T=\tilde T$ in Theorem \ref{thm-parallel-C}(b.2) and in Theorem \ref{thm-parallel-R}(a.2).
%Let $\mathcal{C}_1, \mathcal{C}_2, \mathcal{C}_3, \mathcal{R}_1, \mathcal{R}_2, \mathcal{R}_3$ be defined as in the beginning of this section.

\begin{lemma} \label{scheme}
Suppose $G_1 = U(s_1 E_{11} + s_2 E_{22})V$
and $G_2 = \mu U(s_2 E_{11}+ s_1 E_{22})V$ for some  $U, V \in \bU_2$,
$s_1 > s_2 \ge 0$, and $\mu \in \mathbb{F}$ with $|\mu| = 1$.
Then
\begin{align}\label{eq:para-cond}
a_1G_1 + a_2 G_2
\ \text{and}\ b_1 G_1 + b_2 G_2\ \text{are parallel} \quad \Longleftrightarrow \quad
 (|a_1| - |a_2|)(|b_1|- |b_2|)\geq 0,
\end{align}
  for any $a_1, a_2, b_1, b_2 \in\mathbb{F}$.
%The converse holds when $s_1>s_2\geq 0$.
In particular, the equivalence \eqref{eq:para-cond} holds when one of the
following condition is valid.
\begin{itemize}
\item[{\rm (a)}] $G_1G_2^* = \zeta I$ with $\zeta \ne 0$ and $|\det(G_1)|=|\det(G_2)|$.

\item[{\rm (b)}] $G_1 G_2^* = G_1^*G_2 = 0$ and
$\tr(G_1G_1^*) = \tr(G_2G_2^*)>0$.
\end{itemize}
\end{lemma}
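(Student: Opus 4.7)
My plan is to diagonalize the problem and then read off the parallelism condition from the diagonal entries. Since parallelism and the spectral norm are invariant under the substitution $M \mapsto U^* M V^*$, I may assume $U = V = I_2$, so $G_1 = \diag(s_1, s_2)$ and $G_2 = \mu \diag(s_2, s_1)$ with $s_1 > s_2 \geq 0$. Every linear combination is then diagonal; writing $a_1 G_1 + a_2 G_2 = \diag(p_1, p_2)$ and $b_1 G_1 + b_2 G_2 = \diag(q_1, q_2)$, direct expansion gives
$$|p_1|^2 - |p_2|^2 = |a_1 s_1 + a_2 \mu s_2|^2 - |a_1 s_2 + a_2 \mu s_1|^2,$$
in which the cross terms $2 s_1 s_2 \operatorname{Re}(a_1 \overline{a_2}\,\overline{\mu})$ cancel, leaving $|p_1|^2 - |p_2|^2 = (|a_1|^2 - |a_2|^2)(s_1^2 - s_2^2)$. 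The same identity holds for $|q_1|^2 - |q_2|^2$ with the $b$'s in place of the $a$'s. Since $s_1^2 > s_2^2$, the sign of $|p_1| - |p_2|$ matches that of $|a_1| - |a_2|$, and similarly for the $q$'s and $b$'s.

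Next I observe that two $2 \times 2$ diagonal matrices $\diag(p_1, p_2)$ and $\diag(q_1, q_2)$ are parallel exactly when $(|p_1| - |p_2|)(|q_1| - |q_2|) \geq 0$. Indeed their spectral norms are $\max(|p_1|, |p_2|)$ and $\max(|q_1|, |q_2|)$: if the two maxima occur at a common index $i$, one chooses a unimodular $\nu$ aligning $p_i$ and $\nu q_i$ so that $|p_i + \nu q_i| = |p_i| + |q_i|$, producing the triangle equality; conversely, if $|p_1| > |p_2|$ while $|q_2| > |q_1|$, both entries of the diagonal sum fall strictly below $|p_1| + |q_2|$ for every unimodular $\nu$, so parallelism fails. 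Combined with the sign matching from the previous paragraph, this yields the main equivalence \eqref{eq:para-cond}.

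For the special cases, I show that each reduces to the main hypothesis. In (a), $G_1 G_2^* = \zeta I$ with $\zeta \neq 0$ forces $G_1$ invertible, and its singular value decomposition $G_1 = U \diag(s_1, s_2) V$ gives $G_2 = \bar{\zeta} U \diag(1/s_1, 1/s_2) V$; the determinant equality then reads $|\zeta| = s_1 s_2$, so $G_2 = \mu U \diag(s_2, s_1) V$ with $\mu = \bar{\zeta}/(s_1 s_2)$ unimodular. In (b), the conditions $G_1 G_2^* = G_1^* G_2 = 0$ force the column ranges and the row ranges of $G_1$ and $G_2$ to be mutually orthogonal in $\IF^2$, so each matrix has rank one and admits orthonormal singular vectors $u_1 \perp u_2$ and $v_1 \perp v_2$; assembling these into unitaries one obtains $G_1 = \alpha U E_{11} V$ and $G_2 = \beta U E_{22} V$ for suitable $U, V \in \bU_2$. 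The trace condition gives $|\alpha| = |\beta| > 0$, and after absorbing phases into $U$ the pair takes the main form with $s_2 = 0 < s_1$. The only real obstacle is the cancellation identity in the first step, which is forced by the $s_1 \leftrightarrow s_2$ exchange symmetry between $G_1$ and $G_2$; once it is in hand the rest is routine bookkeeping.
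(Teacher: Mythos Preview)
Your proof is correct and follows the same approach as the paper: reduce to diagonal matrices via $U,V$, compute the key identity $|p_1|^2-|p_2|^2=(|a_1|^2-|a_2|^2)(s_1^2-s_2^2)$, and then read off parallelism from where the diagonal entries attain their maximum modulus. The paper's proof actually stops after the main equivalence and leaves the ``in particular'' clauses (a) and (b) unverified; your reductions of (a) and (b) back to the main hypothesis supply details that the paper omits.
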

\begin{proof}
%Suppose $a_1, a_2, b_1, b_2 \in \mathbb{F}$ satisfy $|a_1| \ge |a_2|$ and $|b_1|\ge |b_2|$.
Observe
$$a_1G_1 + a_2 G_2 = U((a_1s_1+\mu a_2s_2)E_{11} + (a_1s_2+\mu a_2s_1)
E_{22})V = U(\mu_1 E_{11} + \mu_2 E_{22})V$$
where $\mu_1 =  a_1s_1+\mu a_2s_2$ and
$\mu_2 =a_1s_2+\mu a_2s_1$
with
$$|\mu_1|^2-|\mu_2|^2 = (s_1^2-s_2^2)(|a_1|^2-|a_2|^2).$$
Similarly,
$$
b_1 G_1 + b_2 G_2 = U(\nu_1 E_{11} + \nu_2 E_{22})V
$$
where $\nu_1 = b_1s_1+\mu b_2s_2$ and
$\nu_2 = b_1s_2+\mu b_2s_1$
with
$$|\nu_1|^2-|\nu_2|^2 = (s_1^2-s_2^2)(|b_1|^2-|b_2|^2).$$
If $|a_1|\geq |a_2|$ and $|b_1|\geq |b_2|$, then $|\mu_1|\ge|\mu_2|$ and $|\nu_1|\ge |\nu_2|$;
if $|a_1|\leq |a_2|$ and $|b_1|\leq |b_2|$, then $|\mu_1|\le|\mu_2|$ and $|\nu_1|\le |\nu_2|$.
In both cases, $a_1 G_1 + a_2 G_2$ and $b_1 G_1 + b_2 G_2$ are parallel.  In the other  cases, we see that
$a_1 G_1 + a_2 G_2$ and $b_1 G_1 + b_2 G_2$ are not parallel to each other.
\end{proof}

\begin{lemma}\label{lem:2x2-suff} The map $T$ in Theorem \ref{thm-parallel-C} {\rm (b.2)}
(or  Theorem \ref{thm-parallel-R} {\rm (a.2)}) preserves parallel pairs.
\end{lemma}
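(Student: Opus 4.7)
The plan is to reduce to the canonical forms $\tilde T$ and then apply Lemma~\ref{scheme} to the images under $\tilde T$ of the rank-one summands of a joint singular value decomposition of the parallel pair. Since maps $A\mapsto UAV$ with $U,V$ unitary preserve parallel pairs, the compositions appearing in Theorem~\ref{thm-parallel-C}(b.2) and Theorem~\ref{thm-parallel-R}(a.2) reduce the problem to the case $T=\tilde T$ with matrix representation \eqref{eq:(b.2)} (respectively, \eqref{eq:(a.2)}).

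Given a nonzero parallel pair $A,B\in\bM_2(\IF)$, Lemma~\ref{lem:paralle-cond}(b) furnishes $U,V\in\bU_2$ and scalars with $a\ge|\alpha|$ and $|b|\ge|\beta|$ such that $A=aG_1+\alpha G_2$ and $B=bG_1+\beta G_2$, where $G_j=UE_{jj}V$. The rank-one matrices $G_1,G_2$ are disjoint, $G_1G_2^*=G_1^*G_2=0$, with $\tr(G_jG_j^*)=1$, so Lemma~\ref{scheme}(b) produces the parallelism of $A,B$ from the coefficient dominance $(a-|\alpha|)(|b|-|\beta|)\ge 0$. By linearity, $\tilde T(A)=aH_1+\alpha H_2$ and $\tilde T(B)=bH_1+\beta H_2$ for $H_j:=\tilde T(G_j)$, so it suffices to show that the pair $H_1,H_2$ again fits the hypothesis of Lemma~\ref{scheme}: the same coefficient dominance will then yield the parallelism of $\tilde T(A),\tilde T(B)$.

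The crucial step is therefore to verify
\[
H_1H_2^*=c_1I_2,\qquad H_1^*H_2=c_2I_2,\qquad |\det H_1|=|\det H_2|
\]
for some complex scalars $c_1,c_2$. When $c_1\ne 0$, an SVD of $H_2$ together with a permutation of its singular values places $H_1,H_2$ into the form of Lemma~\ref{scheme}(a); when $c_1=c_2=0$, the additional identity $\tr(H_1H_1^*)=\tr(H_2H_2^*)$ invokes Lemma~\ref{scheme}(b). To check these, expand $G_j=x_0^{(j)}I+\vec x^{(j)}\cdot\vec\mathcal{C}$ in the Pauli basis, so that the canonical formula gives $H_j=(d_0x_0^{(j)}+i\vec\gamma\cdot\vec x^{(j)})I+(D\vec x^{(j)})\cdot\vec\mathcal{C}$ with $D=\diag(d_1,d_2,d_3)$. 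Using the identity $(\vec p\cdot\vec\mathcal{C})(\vec q\cdot\vec\mathcal{C})=(\vec p\cdot\vec q)I+i(\vec p\times\vec q)\cdot\vec\mathcal{C}$, the product $H_1H_2^*$ decomposes into a scalar $I$-part plus a $\vec\mathcal{C}$-part; the latter must vanish. The vanishing rests on the disjointness conditions $G_1G_2^*=G_1^*G_2=0$, which in Pauli coordinates encode the orthogonality of the columns of $U$ and of the rows of $V$, together with the reality of $d_0,d_k,\gamma_k$. The equality $|\det H_1|=|\det H_2|$ is a consequence of the complex-conjugate relationship between the two defining expressions.

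The main obstacle is the careful algebraic bookkeeping required for the cancellation of the $\vec\mathcal{C}$-part of $H_1H_2^*$; a convenient organization is first to verify the identities in the two gauge-fixed cases $V=I$ and $U=I$, in which the Pauli-coordinate formulas are simpler and the cancellation reduces to a single orthogonality relation, and then to combine them for general $U,V$. The real case of \eqref{eq:(a.2)} is handled by the same scheme in the basis $\bR$ (using the relations $\mathcal{R}_1^2=\mathcal{R}_2^2=I$, $\mathcal{R}_3^2=-I$, $\mathcal{R}_1\mathcal{R}_2=\mathcal{R}_3$, etc.) and is appreciably simpler since $\tilde T$ is diagonal in $\bR$, reducing the computation to a coordinate-wise rescaling with $d_0,d_1,d_2,d_3$.
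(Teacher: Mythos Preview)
Your proposal follows the same overall strategy as the paper: reduce to $T=\tilde T$, write a parallel pair as $A=a_1G_1+a_2G_2$, $B=b_1G_1+b_2G_2$ with $G_j=UE_{jj}V^*$, show that the images $H_j=\tilde T(G_j)$ satisfy $H_1H_2^*\in\IC I_2$ and $|\det H_1|=|\det H_2|$, and then invoke Lemma~\ref{scheme}. The paper, however, executes the key computation differently: it parametrizes $U,V\in\bU_2(\IC)$ explicitly, computes the entries of $2H_1=(d_{ij})$ and finds directly that $2H_2^*=e^{i(\theta_2-\theta_1)}\begin{pmatrix} d_{22}&-d_{12}\\-d_{21}&d_{11}\end{pmatrix}$, from which $H_1H_2^*=\zeta I$ and the determinant equality are immediate. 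Your Pauli-algebra organization is a legitimate alternative, but the cancellation of the $\vec{\mathcal C}$-part of $H_1H_2^*$ must be carried through carefully once conjugation enters via $H_2^*$; the paper's entrywise route sidesteps this by exhibiting the adjugate-type relation explicitly.

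Two points to tighten. First, the paper begins by reducing to invertible $\tilde T$ via Lemma~\ref{lem:limit}, since Lemma~\ref{scheme}(b) requires $\tr(H_1H_1^*)=\tr(H_2H_2^*)>0$; you should either include that reduction or dispose of the degenerate case $H_1=0$ or $H_2=0$ separately. Second, for the real case the paper does not redo the computation in the basis $\bR$: it simply complexifies $\tilde T$ to $\tau$, observes $\tau(\mathcal{C}_3)=\tau(i\mathcal{R}_3)=id_3\mathcal{R}_3=d_3\mathcal{C}_3$, so $\tau$ falls under the already-proved complex case (with all $\gamma_j=0$), and restricts back. This is shorter than your proposed direct $\bR$-basis computation.
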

\begin{proof}
Note that $T$ lies in the closure of the invertible linear maps. We may assume that $T$ is invertible by Lemma \ref{lem:limit}.

Suppose $\IF=\IC$ first, and $T: \bM_2(\IC)\to \bM_2(\IC)$ is defined by
 \begin{align*}
  T(I_2) = I_2, \quad T(\mathcal{C}_j) = i\gamma_j I_2 + d_j \mathcal{C}_j\quad\text{for $j=1,2,3$,}
\end{align*}
where  $d_1, d_2, d_3, \gamma_1,\gamma_2,\gamma_3 \in \mathbb{R}$. % with $d_j\neq 0$ for $j=0,1,2,3$,
%Without loss of generality, we can assume $d_0=1$, and
Let
$$
\mu_1 =d_1 + i\gamma_1, \quad \mu_2 = i\gamma_2 - \gamma_3, \quad \mu_3 = d_2 + d_3, \quad
\mu_4 = d_2  - d_3.
$$
Then,
\begin{gather}
2T(E_{11})  = \begin{pmatrix} 1 +  \mu_1 & 0 \cr 0 & 1-\bar \mu_1\cr \end{pmatrix},  \quad
2T(E_{22})  = \begin{pmatrix} 1 - \mu_1 & 0 \cr 0 & 1+\bar \mu_1 \cr \end{pmatrix},\notag \\
2T(E_{12})  = \begin{pmatrix} \mu_2 &  \mu_3  \cr \mu_4 & \mu_2 \cr \end{pmatrix}, \qquad \ \qquad
2T(E_{21})  = \begin{pmatrix} -\bar\mu_2  &  \mu_4 \cr \mu_3 & -\bar \mu_2 \end{pmatrix}. \label{eq:TE-ij}
\end{gather}

Suppose $X$ and $Y$ in $\bM_2$ are parallel.
We will show that $T(X)$ and $T(Y)$ are parallel.
Let $X=U\diag(a_1, a_2)V^*$ with $a_1\geq a_2\geq 0$  and
 $Y=U\diag(b_1,b_2)V^*$ with $|b_1|\geq |b_2|$ for some $U,V\in\bU_2(\IC)$.
 In this way,
$$
T(X)= a_1 T(UE_{11}V^*) + a_2 T(UE_{22}V^*) \quad\hbox{ and }
\quad T(Y)= b_1 T(UE_{11}V^*) + b_2 T(UE_{22}V^*).
$$
We may write
$U=\begin{pmatrix} \alpha & -\beta e^{i\theta_1} \\ \overline{\beta} & \overline{\alpha}e^{i\theta_1}\end{pmatrix}$ and
$V=\begin{pmatrix} \gamma & -\delta e^{i\theta_2} \\ \overline{\delta} & \overline{\gamma}e^{i\theta_2}\end{pmatrix}$ with
some $\theta_1,\theta_2\in\IR$ and $|\alpha|^2 + |\beta|^2 =
|\gamma|^2 + |\delta|^2 =1$.
Observe that
\begin{align*}
&\ 2 T(UE_{11}V^*) = 2T(\alpha \overline{\gamma} E_{11} + \overline{\beta}\delta E_{22} + \overline{\beta}\overline{\gamma} E_{21} +
\alpha\delta E_{12})\\
 = &\ \alpha \overline{\gamma}\begin{pmatrix} 1+\mu_1 & 0 \cr 0 & 1-\bar\mu_1\cr\end{pmatrix}
+ \overline{\beta}\delta \begin{pmatrix} 1-\mu_1 & 0 \cr 0 & 1+\bar \mu_1\cr\end{pmatrix}
+  \overline{\beta}\overline{\gamma} \begin{pmatrix} -\bar \mu_2 & \mu_4 \cr
\mu_3 & -\bar \mu_2 \cr\end{pmatrix}
+ \alpha\delta  \begin{pmatrix} \mu_2 & \mu_3 \cr
 \mu_4 & \mu_2 \cr\end{pmatrix},
 \intertext{and}
&\ 2 T(UE_{22}V^*) = 2T(\beta\overline{\delta}e^{i(\theta_1-\theta_2)} E_{11} + \overline{\alpha}\gamma e^{i(\theta_1-\theta_2)} E_{22}
- \overline{\alpha}\overline{\delta} e^{i(\theta_1-\theta_2)} E_{21} - \beta\gamma e^{i(\theta_1-\theta_2)} E_{12})\\
= &\ e^{i(\theta_1-\theta_2)}  \left[ \beta\overline{\delta} \begin{pmatrix} 1+\mu_1 & 0 \cr 0 & 1-\bar\mu_1\cr\end{pmatrix}
+ \overline{\alpha}\gamma \begin{pmatrix} 1-\mu_1 & 0 \cr 0 & 1+\bar \mu_1\cr\end{pmatrix}
- \overline{\alpha}\overline{\delta}  \begin{pmatrix} -\bar \mu_2  &  \mu_4 \cr
\mu_3 & -\bar \mu_2 \cr\end{pmatrix}
- \beta\gamma   \begin{pmatrix} \mu_2 & \mu_3 \cr
 \mu_4& \mu_2 \cr\end{pmatrix}\right] ,
\end{align*}
 respectively. In particular, if
$$d_{11} = \alpha \overline{\gamma}(1+\mu_1) + \overline{\beta}\delta (1-\mu_1)
- \overline{\beta}\overline{\gamma} \bar \mu_2+ \alpha\delta  \mu_2, \qquad
d_{12} = \overline{\beta}\overline{\gamma}\mu_4 + \alpha\delta \mu_3,$$
$$d_{21} =  \overline{\beta}\overline{\gamma} \mu_3 +\alpha\delta \mu_4, \quad
\hbox{ and } \quad
d_{22} = \alpha \overline{\gamma}(1-\bar \mu_1)
 + \overline{\beta}\delta(1+\bar \mu_1) - \overline{\beta}\overline{\gamma}\bar\mu_2 + \alpha\delta\mu_2,$$
then
\begin{gather*}
G_1 =2 T(UE_{11}V^*)
= \begin{pmatrix} d_{11} & d_{12} \cr d_{21} & d_{22}\end{pmatrix},
\qquad
G_2^* = 2 T(UE_{22}V^*)^*
= e^{i(\theta_2-\theta_1)}  \begin{pmatrix} d_{22} & -d_{12} \cr - d_{21} & d_{11}\cr
 \end{pmatrix}.
\intertext{Thus,}
  |\det(G_1)|=|\det(G_2)|, \quad\text{and}\quad G_1G_2^* = \zeta I\ \text{with
 $\zeta = e^{i(\theta_2-\theta_1)} (d_{11} d_{22} - d_{12} d_{21})$.}
 \end{gather*}
  If $\zeta \ne 0$, then
 $T(X), T(Y)$ are parallel by Lemma \ref{scheme} (a).
Otherwise,  we can check that $G_1^*G_2 = 0$ and $\tr(G_1G_1^*)  = \tr(G_2G_2^*)>0$ (since $T$ is invertible),
and Lemma \ref{scheme} (b) applies.

Suppose now that $\IF=\IR$, and $T: \bM_2(\IR)\to \bM_2(\IR)$ is defined by
  \begin{align*}
  T(I_2) &= I_2, \quad T(\mathcal{R}_1) =  d_1 \mathcal{R}_1, \quad T(\mathcal{R}_2) =  d_2 \mathcal{R}_2 \quad\text{and}\quad T(\mathcal{R}_3) =  d_3 \mathcal{R}_3,
\end{align*}
where $d_1,d_2,d_3$ are some real numbers.
Extend $T$ to a complex linear map $\tau: \bM_2(\IC)\to \bM_2(\IC)$ by
linearity.  Then $\tau(\mathcal{C}_3) = \tau(i\mathcal{R}_3)
= i\tau(\mathcal{R}_3) = id_3\mathcal{R}_3 = d_3\mathcal{C}_3$.
It follows from the complex case that $\tau$ sends
parallel pairs to parallel pairs, and so does its restriction $T$.
\end{proof}

%Now, we can finish the proofs of Theorem \ref{thm-parallel-C} and Theorem \ref{thm-parallel-R} by establishing  the following.

Recall that $\mathcal{P}(A) = \{B\in \bM_2(\IF): A \text{ is parallel to }  B\}.$ We have the following.

\begin{lemma} \label{2X2}
Let $A \in \bM_{2}(\IF)$ and  $0\leq a<1$.
\begin{enumerate}
\item[{\rm (a)}]
$\mathcal{P}(I_2)$ consists of  complex normal matrices if  $\mathbb{F}=\mathbb{C}$, and    real symmetric matrices if $\mathbb{F}=\mathbb{R}$.

\item[{\rm (b)}]
Suppose $A\in \bM_2(\IC)$ is a normal matrix with equal singular values
\begin{align}\label{eq:s1=s2}
s_1(r I_2+iA)=s_2(r I_2+iA) \quad\text{for all $r\in\mathbb{R}$.}
\end{align}
 Then  $A =\mu I_2$ for some $\mu \in\mathbb{C}$, or $A=ipI_2+H$ for some
 $p\in\mathbb{R}$ and a Hermitian matrix $H\in \bH_2^0$  with zero trace.
\item[{\rm (c)}]
$\mathcal{P}(\diag(1, a)) =  \{\diag(x_1, x_2): x_1, x_2\in\mathbb{F}, |x_2| \leq |x_1|  \}$.
\item[{\rm (d)}]
If $\diag(1, a)+tA\in \mathcal{P}(\diag(1, a))$ for all $t\in \mathbb{F}$,
then $A=\diag(y_1, ay_1)$ with $y_1 \in\mathbb{F}$.
\end{enumerate}
\end{lemma}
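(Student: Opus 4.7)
\medskip
\noindent\textbf{Proof plan.}

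For (a), I will apply Lemma~\ref{lem:paralle-cond}(b) to the pair $(I_2,B)$. Any decomposition $I_2 = U\diag(1,a_1)V$ with $|a_1|\le 1$ and $U,V$ unitary must satisfy $|a_1\det(U)\det(V)|=|\det(I_2)|=1$; combined with $|a_1|\le 1$ this forces $|a_1|=1$, and further $a_1\in\{-1,1\}$ when $\IF=\IR$. Consequently $V=\diag(1,\overline{a_1})U^*$, whence $B=U\diag(b,b_1)V = U\diag(b,\overline{a_1}b_1)U^*$ is $U$-diagonalised, hence complex normal when $\IF=\IC$ and real symmetric when $\IF=\IR$. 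The converse is immediate: a spectral decomposition $B=U\diag(\mu_1,\mu_2)U^*$ ordered so that $|\mu_1|\ge|\mu_2|$ plugs back into Lemma~\ref{lem:paralle-cond}(b) via $a_1=1$, $V=U^*$, $b=\mu_1$, $b_1=\mu_2$.

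For (b), I will exploit that normality of $A$ makes $rI_2+iA$ normal, so its singular values are the moduli of its eigenvalues. Diagonalising $A=W\diag(\lambda_1,\lambda_2)W^*$ reduces \eqref{eq:s1=s2} to the scalar identity $|r+i\lambda_1|=|r+i\lambda_2|$ for every $r\in\IR$. Squaring and matching the coefficients of $r^0$ and $r^1$ gives $\mathrm{Im}\,\lambda_1=\mathrm{Im}\,\lambda_2=:p$ and $|\lambda_1|=|\lambda_2|$, which in turn forces $\mathrm{Re}\,\lambda_1=\pm\mathrm{Re}\,\lambda_2$. The $+$ branch yields $\lambda_1=\lambda_2=:\mu$ and $A=\mu I_2$; the $-$ branch yields eigenvalues $\pm s+ip$, giving $A=ipI_2+H$ with $H=W\diag(s,-s)W^*\in\bH_2^0$.

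For (c) and (d), uniqueness of the SVD of $\diag(1,a)$ is the workhorse. In (c), Lemma~\ref{lem:paralle-cond}(b) gives $\diag(1,a)=U\diag(1,a_1)V$ with $|a_1|\le 1$, and singular-value matching forces $|a_1|=a$. When $a>0$ the two singular values of $\diag(1,a)$ are distinct, so writing $a_1=ae^{i\theta}$ and absorbing $e^{i\theta}$ into $V$, SVD uniqueness makes $U$ and the adjusted $V$ diagonal unitaries; hence $B=U\diag(b,b_1)V$ is diagonal with $|x_2|\le|x_1|$. When $a=0$ the matrix $\diag(1,0)$ has rank one, which pins down $Ue_1$ and the first row of $V$ up to phase and yields the same diagonal conclusion. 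The converse via the identity decomposition is immediate. For (d), applying (c) to every $\diag(1,a)+tA$ ($t\in\IF$) first forces $A$ to be diagonal, say $A=\diag(y_1,y_2)$; the remaining inequality $|a+ty_2|\le|1+ty_1|$ must then hold for all $t\in\IF$. If $y_1\ne 0$, the admissible substitution $t=-1/y_1\in\IF$ collapses the right-hand side to $0$ and forces $y_2=ay_1$; if $y_1=0$, the right-hand side equals $1$ while $|ty_2|$ is unbounded in $t$ unless $y_2=0$, which still gives $y_2=ay_1$.

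The main point requiring care is the $a=0$ sub-case of (c), where one singular value of $\diag(1,a)$ vanishes and the usual distinct-singular-value uniqueness does not apply verbatim; a short rank-one matching argument replaces it. All other steps reduce to short direct computations.
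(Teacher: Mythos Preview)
Your proof is correct. The overall structure matches the paper's---both rely on Lemma~\ref{lem:paralle-cond}(b) to reduce to diagonal forms---but several tactical choices differ.

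In (a), the paper observes directly that if $PXQ$ and $PI_2Q$ are both diagonal then so is $(PI_2Q)^*(PXQ)=Q^*XQ$, giving normality in one line; your determinant argument showing $|a_1|=1$ and hence $V=\diag(1,\overline{a_1})U^*$ reaches the same conclusion by a slightly longer but equally valid route. In (c), the paper avoids your case split: from $U^*\diag(1,a)=\diag(1,a_1)V$ it compares the $\ell_2$-norm of the first row on each side and uses only $a<1$ to force $u_{21}=0$, handling $a>0$ and $a=0$ uniformly; your SVD-uniqueness argument for $a>0$ plus rank-one matching for $a=0$ is correct but less economical. In (d), your approach is actually shorter than the paper's: where the paper expands $|1+ty_1|^2-|a+ty_2|^2$ as a quadratic in $|t|$, optimizes over the phase of $t$, and shows the discriminant equals $|ay_1-y_2|^2$, you simply plug in $t=-1/y_1$ (when $y_1\ne0$) to collapse $|1+ty_1|$ to zero and read off $y_2=ay_1$ immediately. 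Part (b) is essentially identical in both proofs.
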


\begin{proof}
(a) Suppose $X\in \mathcal{P}(I_2)$. Then there are $P, Q \in \bU_2$ such that  $PXQ$ and $PI_2Q$ are in diagonal forms, and so is $(PI_2Q)^*(PXQ) = Q^*XQ$. Thus, $X$ is complex normal if  $\mathbb{F}=\mathbb{C}$, and $X$ is real  symmetric if $\mathbb{F}=\mathbb{R}$.
The other inclusions are obvious.

(b) By (a), $A = U\diag(\mu, \nu) U^*$ for some $U \in \bU_2(\IC)$.
Since $rI_2+iA$ have identical singular values for any $r \in \IR$,
we have $|r+i\mu| = |r+ i\nu|$  for all $r\in \IR$.
It follows $\operatorname{Im}\, (\mu) =\operatorname{Im}\, (\nu)=p$  and   $\operatorname{Re}\, (\mu) =\pm\operatorname{Re}\ (\nu)=\pm q$.
Hence, either $A=\mu I$ or $A = U\begin{pmatrix} q + pi & 0\\ 0 & -q +pi\end{pmatrix}U^* = ipI_2 + H$ with
$H=U\begin{pmatrix} q  & 0\\ 0 & -q\end{pmatrix}U^*\in \bH_2^0$.

(c)
Suppose $X\in \mathcal{P}(\diag(1, a))$, $0\leq a<1$. Then there are $U, V\in \bU_2$ such that
$$
\begin{pmatrix} 1 & 0 \cr 0 & a \cr\end{pmatrix}=U\begin{pmatrix} 1 & 0 \cr 0 & a_1 \cr\end{pmatrix}V \quad\text{and}\quad
X=U\begin{pmatrix} x_1 & 0 \cr 0 & x_2 \cr\end{pmatrix}V,
$$
where $|a_1|=a$ and $|x_2|\leq |x_1|$. Consider the first row of $U^*\diag(1, a)=\diag(1, a_1)V$. We derive $U$ and  $V$ are in diagonal forms. Thus, $X=\diag(y_1, y_2)$ with $|y_2|\leq |y_1|$.

(d) Let $t=|t|e^{i\theta}$. By assumption and (c),  $A=\diag(y_1, y_2)$ with $|1+|t| e^{i\theta} y_1|  \ge |a+|t|e^{i\theta} y_2|$, and thus
$$
0 \le |1+|t| e^{i\theta} y_1|^2 - |a+|t|e^{i\theta} y_2|^2
 = (1-a^2) +    2|t|\operatorname{Re}\, (e^{i\theta} (y_1 - ay_2)) + |t|^2(|y_1|^2 - |y_2|^2)
 $$
for any  $t\in \mathbb{F}$.
Choosing  real $\theta_\pm$ such that $e^{i\theta_\pm}(y_1-ay_2) = \pm |y_1-ay_2|$, we have
$$
0 \le (1-a^2)\pm    2|t||y_1 - ay_2| + |t|^2(|y_1|^2 - |y_2|^2) \quad\text{for all $|t|\geq 0$.}
 $$
It follows
$$0 \ge   |y_1-ay_2|^2 - (1-a^2)(|y_1|^2 - |y_2|^2) = |ay_1-y_2|^2.$$
Thus, $y_2=ay_1$ and $A = y_1\diag(1,a)$.
\end{proof}

\subsection{Proof of Theorem \ref{thm-parallel-C}}

\begin{proof}[Proof of Theorem \ref{thm-parallel-C}{\rm (b)}]
For the sufficiency, $T$ clearly preserves parallel pairs if $T$ has the form in (b.1).  If $T$ has the form in (b.2), by Lemma \ref{lem:2x2-suff}, $T$ also preserves parallel pairs.  %Hence the sufficiency of Theorem \ref{thm-parallel-C}(a) holds.

For the necessity, suppose $T$  preserves parallel pairs.
If $T$ sends every unitary matrix to zero, then $T$ is a zero map.
We assume below $T(P) \ne 0$ for some $P \in \bU_2$.
We may replace the map by $X \mapsto UT(PX)V/\gamma$ and assume that $T(I) = \diag(1,a)$ with $0\leq a \leq 1$.

{\bf Case 1.} Suppose $0\leq a < 1$.
Let $X = a_1\mathcal{C}_1 + a_2 \mathcal{C}_2 + a_3 \mathcal{C}_3$ with  $T(X) = Y \ne 0$.
Note that $X$ is hermitian.
For any $t\in  \IC$, the normal matrix $I + t X$ and $I$ are parallel,
and hence $T(I+t X) =  \diag(1,a) + t Y$ and $T(I) =  \diag(1,a)$ are parallel.
Thus, by Lemma \ref{2X2}(d),  $Y = y_1\diag(1,a) = y_1 T(I)$.
%Since $a_1, a_2, a_3$ are arbitrary, $T(\mathcal{C}_j)$ is a multiple of $T(I)$ for $j = 1,2,3$.
Hence, the range space of $T$ is spanned by $T(I)$, and thus $T$ has the form (b.1).

{\bf Case 2.} We may  assume that $T(U)$ has identical singular values;
in other words, $T(U)$ is a multiple of a unitary matrix
for every unitary matrix $U\in \bU_2(\IC)$.  Otherwise, we will go back to Case 1.
Suppose further that   $T(I) = I$. Since $\mathcal{C}_j$ and $I$ are parallel, $T(\mathcal{C}_j)$ is
 parallel to $I$, and thus a complex normal matrix by Lemma \ref{2X2}(a).
Consequently,  $T(rI_2+i\mathcal{C}_j)=rI_2+iT(\mathcal{C}_j)$ is a complex normal matrix, while it is also
 a multiple of a unitary matrix inherited from $rI_2+i\mathcal{C}_j$ for $j = 1,2,3$ and $r \in \IR$.
By Lemma \ref{2X2}(b), $T(\mathcal{C}_j) =\alpha_j I_2=\alpha_j T(I_2)$ or $T(\mathcal{C}_j) = i\gamma_j I_2+ H_j$,
 where $H_j$ is a Hermitian matrix with zero trace.
We are going to verify that either $T(\mathcal{C}_j)=\alpha_j I_2$ for $j=1,2,3$, or $T(\mathcal{C}_j)=i\gamma_j I_2+ H_j$ for $j=1,2,3$.

Suppose $T(\mathcal{C}_1)= \alpha_1 I_2$ with $\alpha_1 = a + ib$ and $a\neq 0$.
Assume that  $T(\mathcal{C}_2)= i\gamma_2 I_2 + H_2$
for some Hermitian $H_2$ with eigenvalues $\pm h_2$.  Since  $r\mathcal{C}_1 + \mathcal{C}_2$ is a multiple of a unitary matrix,
so is  $T(r\mathcal{C}_1 + \mathcal{C}_2) = r(a+ib)I_2 + ir_2I_2 + H_2$ for all $r\in\IR$.  It follows
$|(ra + h_2)+ i(rb+\gamma_2)| = |(ra - h_2)+ i(rb+\gamma_2)|$ for all $r\in \mathbb{R}$.
Hence $h_2=0$, and $T(\mathcal{C}_2)=i\gamma_2 I_2$.
Similarly, assume that  $T(\mathcal{C}_3)= i\gamma_3 I_2 + H_3$
for some Hermitian $H_3$ with eigenvalues $\pm h_3$.
Since $r\cC_1 + \cC_3$
% = \begin{pmatrix} r & -i\\ i & -r\end{pmatrix}$
is a multiple of a unitary matrix, so
is $rT(\cC_1) + T(\cC_3)= r(a+ib)I_2 + ir_3I_2 + H_3$ for all $r\in\IR$.  It follows
$$
|(ra+h_3) + i(rb+\gamma_3)| = |(ra-h_3)+i(rb+\gamma_3)|\quad\text{foll all $r\in \IR$.}
$$
It forces $h_3=0$, and thus $T(\cC_3)=i\gamma_3 I_2$.
Therefore,  $T(\mathcal{C}_j)=\alpha_j I_2$ for $j=1,2,3$ in this case.

The situations when $T(\mathcal{C}_j)=\alpha_j I_2$ with $\alpha_j =c_j +id_j$ and $c\neq 0$ for $j=2,3$ are similar.
Therefore,   $T$ has the form given in (b.1).  Otherwise, the matrix of $T$ with respect to
the basis $\{I/\sqrt 2, \mathcal{C}_1/\sqrt 2, \mathcal{C}_2/\sqrt 2, \mathcal{C}_3/\sqrt 2\}$ has the form
$$
\begin{pmatrix} 1 & i\gamma_1 \  i\gamma_2 \ i\gamma_3 \cr
0 & \cr
0 & R \cr
0 & \cr\end{pmatrix}
$$
for some real scalars $\gamma_1,\gamma_2, \gamma_3$ and a real matrix $R$.
Proposition \ref{main3}(a) finishes the proof.
\end{proof}

\begin{proof}[Proof of Theorem \ref{thm-parallel-C}{\rm (a)}]
For the sufficiency, if $T$ has the form
(i) $A \mapsto UAV$ or
(ii) $A \mapsto UA^{\TT}V$, then $T$ clearly preserves TEA pairs.

For the necessity, by Proposition \ref{prop:tea-para-not-inv}(a),
a nonzero complex linear map preserves TEA pairs will be invertible.
Clearly, it also preserves parallel pairs. So, it
has the form described in Theorem \ref{thm-parallel-C} (b.2) with an invertible matrix in \eqref{eq:(b.2)}.
We will show that if such map $\tilde T$ is
a unital map (i.e., $d_0=1$) satisfying the condition that
\begin{align}\label{eq:tau1}
\|\tilde T(A)+ \tilde T(B)\| = \|\tilde T(A)\| + \|\tilde T(B)\| \quad\text{whenever}\quad \|A + B\| = \|A\| + \|B\|,
\end{align}
then it is either
the  map $\tilde T(A)=WAW^*$  or the transpose map $\tilde T(A)= WA^{\TT}W^*$ for a $W\in\bU_2(\IC)$.
Consequently, $T$ will reduce to the standard form of
$A \mapsto \gamma UAV$ or $A \mapsto \gamma UA^{\TT}V$ for some $\gamma>0$.

Observe that
 $\|I+\mathcal{C}_j\| = \|I\| + \|\mathcal{C}_j\|$ implies $\tilde T(I)=I$ and $\tilde T(\mathcal{C}_j)=i\gamma_j I + d_j\mathcal{C}_j$ form a TEA pair, namely,
$$
\|I + i\gamma_j I + d_j \mathcal{C}_j\| = \|I\| + \|i\gamma_j I + d_j \mathcal{C}_j\|\quad \text{for $j=1,2,3$.}
$$
Direct verification of the corresponding  equalities show
that $\gamma_j=0$  for all $j=1,2,3$.
Moreover, $A = 2E_{11}$ and $B = 2(E_{11}+i E_{22})$ satisfy
$\|A+B\| = \|A\|+\|B\|$, and by \eqref{eq:tau1} so are the images
$\begin{pmatrix} 1+d_1 & 0\\ 0 & 1-d_1\end{pmatrix}$ and
 $\begin{pmatrix} 1+d_1 +i(1-d_1) & 0 \\ 0 & 1-d_1 + i(1+d_1)\end{pmatrix}$  via \eqref{eq:TE-ij}.
It follows $d_1 = \pm1$.

Define $$\tau_1(A)=U\tilde T(UAU^*)U^*  \quad\mbox{and}
\quad  \tau_2(A)=V\tilde T(VAV^*)V^*,$$ where
$U=\frac{1}{\sqrt{2}}\begin{pmatrix} 1 & 1\\ 1 & -1\end{pmatrix}$
and $V=\frac{1}{\sqrt{2}}\begin{pmatrix} 1 & i\\ i & 1\end{pmatrix}$, respectively.
Then $\tau_i$ also satisfies \eqref{eq:tau1} as $\tilde{T}$. Now,
\begin{gather*}
\tau_1(I) = I, \quad \tau_1(\mathcal{C}_1) = d_2 \mathcal{C}_1,  \quad \tau_1(\mathcal{C}_2) = d_1 \mathcal{C}_2,  \quad \tau_1(\mathcal{C}_3) = d_3\mathcal{C}_3 \\
\tau_2(I) = I, \quad \tau_2(\mathcal{C}_1) = -d_3 \mathcal{C}_1,  \quad \tau_2(\mathcal{C}_2) = d_2 \mathcal{C}_2,  \quad \tau_2(\mathcal{C}_3) = -d_1\mathcal{C}_3.
\end{gather*}
Applying the arguments on $\tilde T$ to $\tau_i$,
,  we see that both $d_2, d_3$ assume the values $\pm1$.
In fact, we may assume $d_1=1$; otherwise, we can replace $\tilde T$ with the map
$A\mapsto U_1\tilde T(A)U_1^*$,
where $U_1=\begin{pmatrix} 0 & 1\\ 1 & 0\end{pmatrix}$.
Similarly, we may   assume $d_2=1$;
for else, we can replace $\tilde T$ with the map $A\mapsto U_2\tilde T(A)U_2^*$,
where $U_2=\begin{pmatrix} 1 & 0\\ 0 & -1\end{pmatrix}$.
Finally, we may also assume $d_3 =1$, because we can compose $\tilde T$ with the transpose map if necessary.
Hence, $\tilde T$ is the identity map, and the necessity of Theorem \ref{thm-parallel-C}(a) is verified.
\end{proof}

\subsection{Proof of Theorem \ref{thm-parallel-R}}

\begin{proof}[Proof of Theorem \ref{thm-parallel-R} {\rm (b)}]
For the sufficiency, if $T(A) = (\tr AF)Z$  for some nonzero $F, Z \in \bM_2(\IR)$, then $T$ clearly preserves parallel pairs. Suppose $T$ has
the form in (a.1); namely, $T(A) = \tr (F_1A)Z_1 + \tr(F_2A)Z_2$.
If $X$ and $Y$ in $\bM_2(\IR)$ are parallel,  then there are $U, V \in \bU_2(\IR)$
  with  $\det(U)  = 1$
such that  $X =U\diag(a_1, a_2)V^{\TT}$ and $Y =U\diag(b_1, b_2)V^{\TT}$ with $a_1\geq |a_2|$ and
 $|b_1|\geq |b_2|$. Let $R = V^{\TT} F_1 U$, and then
$$
 {\rm (1)} \ R = \begin{pmatrix} \cos\theta & -\sin \theta \cr
 \sin \theta & \cos\theta\end{pmatrix} \quad \hbox{ or } \quad
 {\rm (2)} \ R = \begin{pmatrix} \cos\theta & \sin \theta \cr
 \sin \theta & -\cos\theta\end{pmatrix}\quad \text{for some real $\theta$}.
$$
 Note that $U^{\TT}(E_{12}-E_{21})U = E_{12}-E_{21}$.
 Then
 $$\tr(F_1X) = \tr(F_1 U(a_{1}E_{11}+a_2E_{22})V^{\TT}) = \tr(R(a_{1}E_{11}+a_2E_{22}))$$
 and
 $$\tr(F_2X) = \tr(F_1(E_{12}-E_{21}) U(a_{1}E_{11}+a_2E_{22})V^{\TT})
 = \tr(R(E_{12}-E_{21})(a_{1}E_{11}+a_2E_{22})).$$
 Thus, $(\tr(F_1X), \tr(F_2X)) = (a_1+a_2) (\cos \theta, \sin \theta)$
 or $(a_1-a_2) (\cos\theta, -\sin \theta)$ depending on
 $R$ has the form (1) or (2).
 Similarly, $(\tr(F_1Y), \tr(F_2Y)) = (b_1+b_2) (\cos \theta, \sin \theta)$
 or $(b_1-b_2) (\cos\theta, -\sin \theta)$ depending on
 $R$ has the form (1) or (2).
 Therefore, $T(X)$ and $T(Y)$ are linearly dependent, and thus
 parallel. If $T$ has the form in (a.2), then $T$ preserves parallel pairs by Lemma \ref{lem:2x2-suff}.

For the necessity, suppose $T$ is a  linear map preserving parallel pairs.
If $T(P) = 0$ for every $P \in \bU_2(\IR)$, then $T$ is a zero map.
We assume below that $T(P) \ne 0$ for some $P \in \bU_2(\IR)$.
Replacing $T$ by a map of the form $UT(PX)V/\gamma$,
we may assume that $T(I) = \diag(1,a)$ with $1 \ge a \ge 0$.

If $T(\mathcal{R}_1) = T(\mathcal{R}_2)=0$, then
$T(a_0I + a_1\mathcal{R}_1 + a_2 \mathcal{R}_2 + a_3 \mathcal{R}_3) = a_0 T(I) + a_3 T(\mathcal{R}_3)$ has the form in (a.1)
with $F_1=I_2$, $F_2=(E_{12}-E_{21})$, $Z_1=\frac{1}{2}T(I)$ and $Z_2=\frac{-1}{2}T(\mathcal{R}_3)$.
Suppose it is not the case.
We may further assume that   $T(\mathcal{R}_2) \ne 0$, for else we can replace $T$ by the
map $X\mapsto  T(X\mathcal{R}_3)$.

{\bf Case 1.} Assume that $T(I) = \diag(1,a)$ with $1 > a \ge 0$.
For any $t \in \IR$, the real symmetric matrix
$I + t\mathcal{R}_2$ is parallel to $I$, and thus $T(I) + tT(\mathcal{R}_2)$ is parallel to $T(I)=\diag(1, a)$.
It follows from   Lemma \ref{2X2}(d) that $T(\mathcal{R}_2)= y_2\diag(1, a)\neq 0$ for some scalar $y_2$.
Similarly, $T(\mathcal{R}_1)=y_1\diag(1,a)$ for some scalar $y_1$.
Now, $\mathcal{R}_2 + t\mathcal{R}_3$ is parallel to $\mathcal{R}_2$ for any $t \in \IR$. Repeating the above argument, we see that
$T(\mathcal{R}_3)$ is a multiple of $T(\mathcal{R}_2)$, and thus
also a multiple of $\diag(1,a)$. Hence $T(A) = (\tr FA)Z$  for some nonzero $F\in \bM_2(\IR)$ and $Z=\diag(1,a)$.

{\bf Case 2.} Assume that $T(I) = I$. Furthermore, we may assume that
$T(P)$ is a multiple of an orthogonal matrix for any $P \in \bU_2(\IR)$; otherwise, we are back to Case 1. Since $X = qI + \mathcal{R}_3$ is
always a multiple of an orthogonal matrix in $\bU_2(\IR)$, so is  $T(X) = qI + T(\mathcal{R}_3)$
for any $q\in \IR$. Thus, $T(\mathcal{R}_3) = rI + s\mathcal{R}_3$ for some real numbers $r,s$ by taking $A=-iT(\mathcal{R}_3)$  in Lemma \ref{2X2}(b).

Suppose $X = a_1 \mathcal{R}_1 + a_2 \mathcal{R}_2$ such that $T(X) = Y \ne 0$.
Since $X$ is a multiple of an orthogonal matrix in $\bU_2(\IR)$, so is $Y$.
Moreover,   $X$ and $I$ are parallel, and so are
 $Y$ and $I$. Thus, by  Lemma \ref{2X2}(a), $Y$ is real symmetric.
 Since $Y$ is a multiple of an  orthogonal matrix in $\bU_2(\IR)$, we see that

\medskip\centerline{
(i) \ $Y= \gamma I$,  \qquad  or  \qquad (ii) \ $Y= \gamma Q\mathcal{R}_1 Q^{\TT}\in \bU_2(\IR)$,}

\noindent
for some $\gamma\in \IR$.

\medskip

Assume   $T(\mathcal{R}_2) = \beta I\neq0$.
Note that the real symmetric matrix $\mathcal{R}_1 + t\mathcal{R}_2$ is a multiple of an orthogonal matrix in $\bU_2$ for any real $t$. Thus, $T(\mathcal{R}_1) = \delta I$. Note also that $q\mathcal{R}_3 +  \mathcal{R}_2$ and $\mathcal{R}_2$ are always parallel, and so are $q T(\mathcal{R}_3) + T(\mathcal{R}_2)= qT(\mathcal{R}_3) + \beta I$ and $T(\mathcal{R}_2)= \beta I$.
Being a multiple of an orthogonal matrix, $T(\mathcal{R}_3) = rI + s \mathcal{R}_3$ has zero skew symmetric part. We see that $s = 0$ and $T(\mathcal{R}_3) = rI$. Hence, $T(\mathcal{R}_j)$ is a multiple of $I$ for $j = 1,2,3$. Thus $T(A) = (\tr FA)I$  for some nonzero $F \in \bM_2(\IR)$.

The case when $T(\mathcal{R}_1)=\alpha I\neq 0$ is similar.

\medskip
Assume   that $T(a_1 \mathcal{R}_1 + a_2 \mathcal{R}_2) = Z$ is always symmetric with trace zero.
Otherwise, we are back to the situation in (i).  So, the matrix representation of  $T$ with respect to the basis
$\bR=\{I/\sqrt 2, \mathcal{R}_1/\sqrt 2, \mathcal{R}_2/\sqrt 2, \mathcal{R}_3/\sqrt 2\}$ has the form
$$\begin{pmatrix}
 1 & 0 & 0& r\cr
0 & a_{11} & a_{12} & 0\cr
0 & a_{21} & a_{22}& 0 \cr
0 & 0 & 0 & s \cr
\end{pmatrix}.$$
\end{proof}

\begin{proof}[Proof of Theorem \ref{thm-parallel-R} {\rm (a)}]
For the sufficiency, if $X$ and $Y$ attain the triangle equality with $\|X\|=\|Y\|=1$, then there are $U, V \in \bU_2(\IR)$ with
 $\det(U)   = 1$ such that
 $X =U(E_{11} + a E_{22})V^{\TT}$ and $Y =U(E_{11} + b E_{22})V^{\TT}$ with
 $a, b \in [-1,1]$.

Suppose $T$ has the form (a.1). As in the proof of Theorem \ref{thm-parallel-R}(b), we see that
\begin{align*}
(\tr(F_1X), \tr(F_1Y)) &= \cos \theta (1+a, 1+b)
\ \text{or}\
\cos \theta (1-a, 1-b),\  \text{or} \\
(\tr(F_2X), \tr(F_2Y)) &= \sin \theta (1+a, 1+b)
\ \text{or}\
 -\sin \theta (1-a, 1-b)
\end{align*}
for some real scalar $\theta$.  Thus
\begin{align*}
T(X)&=(1\pm a)\cos\theta Z_1 \pm (1\pm a)\sin\theta Z_2,\\
T(Y)&=(1\pm b)\cos\theta Z_1 \pm (1\pm b)\sin\theta Z_2
\end{align*}
satisfy $(1\pm b)T(X)=(1\pm a)T(Y)$.  In particular, $T(X)$ and $T(Y)$ attain the triangle equality.  Then we see that
 $T$ preserves TEA pairs.

Suppose $T$ has the form in (a.2). We first consider the case when $T$ is invertible, that is,
$[T]_{\bR}=\diag(d_1, d_2, d_3, d_4)$ with $d_1d_2d_3d_4\neq 0$. We may replace $T$ by
the map $A \mapsto T(A)/d_1$, and assume that $d_1 = 1$.
Suppose that  $\|X+Y\| = \|X\|+\|Y\|$ for some    $X,Y\in\bM_2(\IR)$.
We can write $X= U(a_1E_{11} + a_2 E_{22})V^{\TT}$
and $Y =  U(b_1 E_{11}+  b_2 E_{22})V^{\TT}$
for some orthogonal matrices $U, V \in \bM_2(\IR)$, and some $ a_1,a_2,b_1,b_2\in \IR$ with $ a_1\geq a_2 \geq 0$ and $ b_1\geq |b_2|$.
 Let $G_1=T(UE_{11}V^{\TT})$ and $G_2= T(UE_{22}V^{\TT})$.
 As in the proof of Lemma \ref{lem:2x2-suff}, we see that $G_1,G_2$ satisfy the assumption in Lemma \ref{scheme}.
More precisely, we have orthogonal matrices $\tilde R,\tilde S$ in $\bM_2(\IR)$ such that
 $G_1 = \tilde R(s_1 E_{11} + s_2 E_{22})\tilde S$
and $G_2 = \zeta \tilde R(s_2 E_{11}+ s_1 E_{22})\tilde S$ for
$s_1 > s_2 \ge 0$, and a scalar  $\zeta = \pm 1$.
Observe that
\begin{eqnarray*}
T(X) & = & a_1G_1 + a_2 G_2\\
&=& \tilde R((a_1s_1+\zeta a_2s_2)E_{11} + (a_1s_2+\zeta a_2s_1)E_{22})\tilde S \\
&=& \tilde R(\mu_1 E_{11} + \mu_2 E_{22})\tilde S,
\end{eqnarray*}
where $\mu_1 =  a_1s_1+\zeta a_2s_2$ and
$\mu_2 =a_1s_2+\zeta a_2s_1$.
Similarly,
$$T(Y) =b_1 G_1 + b_2 G_2 = \tilde R(\nu_1 E_{11} + \nu_2 E_{22}) \tilde S,\ \hskip .5in \
$$
where
$$
\nu_1 = b_1s_1+\zeta b_2s_2\quad\text{and}\quad
\nu_2 = b_1s_2+\zeta b_2s_1.
$$
 % with $|\nu_1|\geq|\nu_2|$.

If $\zeta=1$, then $\mu_1 = a_1s_1+  a_2s_2 \geq a_1s_2+  a_2s_1 = \mu_2\geq 0$ and
$\nu_1 = b_1s_1+  b_2s_2 \geq |b_1s_2+ b_2s_1|=|\nu_2|$.
If $\zeta=-1$, then $\mu_1 = a_1s_1-  a_2s_2 \geq |a_1s_2-  a_2s_1| = |\mu_2|$ and
$\nu_1 = b_1s_1-  b_2s_2 \geq |b_1s_2-b_2s_1|=|\nu_2|$ again.
In both cases, we have
$$
\|T(X)+T(Y)\| = \mu_1 + \nu_1 = \|T(X)\| + \|T(Y)\|.
$$

Now, if $T$ with the form in (a.2) is singular so that some of the
$d_j = 0$ for $j \in \{1,2,3\}$.
We can approximate $T$ by a sequence $\{T_k\}_{k\geq1}$ of invertible TEA preservers carrying
the form \eqref{eq:(a.2)}.  It follows from Lemma \ref{lem:limit} that $T$ is a TEA preserver as well.

For the necessity, if $T$ is a nonzero linear map preserving  TEA pairs, it will preserve parallel pairs.
By Theorem \ref{thm-parallel-R}(b), we see that $T$ has the forms
given in (a,1), (a.2) or $T(A)=\tr(F A)Z$ for some nonzero $F, Z\in \bM_2(\IR)$. Suppose that $T(A)=\tr(F A)Z$.
There are $U, V \in \bU_2(\IR)$ such that $F =U(aE_{11} + b E_{22})V^{\TT}$ with $a\geq  b \geq 0$.
Let $X=V(-\mathcal{R}_1)U^{\TT}$ and $Y=VI_2U^{\TT}$.
Note that $X$ and $Y$ attain the triangle equality, and so are $T(X)=(-a+b)Z$ and $T(Y)=(a+b)Z$. This implies
$$
2b\|Z\|=\|T(X)+T(Y)\|=\|T(X)\|+\|T(Y)\|=2a\|Z\|,
$$
and $a=b>0$. Thus, $T$ has the form (a.1) with $F_1=\frac{1}{a}F$, $F_2=F_1(E_{12}-E_{21})$, $Z_1=aZ$ and $Z_2=0$.
\end{proof}

\section{Related results and questions}\label{S:future}

In connection to our results,
one can consider other norms on $\bM_{m,n}$. Suppose $1 \le k \le \min\{m,n\}$.
Define the Ky-Fan $k$-norm on $\bM_{m,n}$ by
$|A|_k = \sum_{j=1}^k s_j(A)$, the sum of the $k$ largest singular values.
If $k = 1$, it reduces to the spectral norm.
In \cite{Ket}, the authors consider the $(\bV, |\cdot|) = (\bM_{n}, |\cdot|_k)$,
for $2 \le k \le n$,
and show that an invertible linear map preserves
parallel pairs if and only if it preserves TEA pairs, and
the map must be a positive multiple of an isometry for $|\cdot|_k$.

\iffalse
It is known that an isometry for $|\cdot|_k$ on $\bM_n(\IF)$ has the form

(i) $A \mapsto UAV$,

(ii) $A \mapsto UA^{\TT}V$,  or

(iii) $(\bV, |\cdot|)  = (\bM_4(\IR), |\cdot|_2)$
and $T$ has the form $A \mapsto \gamma L(UAV)$ or $A \mapsto \gamma L(UA^{\TT}V)$,

\qquad for some  $U, V \in \bU_n(\IR)$ and
$L(X) = X - \frac{1}{2} [(\tr XF_1)F_1 + (\tr XF_2)F_2 + (\tr  XF_3)F_3 + (\tr XF_4)F_4]$
with
$F_1 = \diag(-1,1,1,1)$, $F_2 = \begin{pmatrix} 0 & 1 \cr 1 & 0 \cr\end{pmatrix} \oplus
\begin{pmatrix} 0 & -1 \cr 1 & 0 \cr\end{pmatrix}$,
$$F_3 = {\tiny \begin{pmatrix}
0&0&1&0\cr 0&0&1&0\cr 1&0&0&0\cr 0&-1&0&0\cr
\end{pmatrix}}  \quad \hbox{ and } \quad
F_3 = {\tiny \begin{pmatrix}
0&0&1&0\cr 0&0&1&0\cr 1&0&0&0\cr 0&-1&0&0\cr
\end{pmatrix}};$$
see \cite{CL,Li}.
\fi

\medskip
% for the isometries of the Ky-Fan $k$-norms on $\bM_{m,n}$.
For the description of the isometries for the Ky-Fan $k$-norms on $\bM_{m,n}(\IF)$, one may see \cite{Li}.
Especially, it  includes two
special maps $\tau_1, \tau_2$ when $\bM_{m,n}(\IF) =\bM_4(\IR)$.
The proofs in \cite{Ket} for square matrices can be readily extended to $\bM_{m,n}$.
The authors of the paper  \cite{Ket} also characterize TEA and parallel preservers
with respect to the $k$-numerical radius on square matrices in a forthcoming paper.

\medskip
Our study can be considered for more general operator spaces or operator algebras.
For example,
one may consider extensions of our results  to
$B(H,K)$, the set  of bounded linear operators
$A: H \rightarrow K$ between real or complex Hilbert space over,
equipped with the operator norm
$$
\|A\| = \sup\{ \|Ax\|: x \in H, \ \|x\| = 1\}.
$$
In a forthcoming paper \cite{LTWW-poa}, we will give the characterization of linear parallel pair
 preservers of bounded operators,
as well as extensions to $C^*$-algebras and $JB^*$-triples.

\medskip
It would also be interesting to consider our problems for
other (finite or infinite dimensional) normed spaces; e.g., see \cite{Li}.
In fact, one may  consider the problems for linear maps
between two normed spaces $(\bV_1, |\cdot|_1)$ and
$(\bV_2, |\cdot|_2)$. An ambitious project is to determine those norms
$|\cdot|_1$ and $|\cdot|_2$ such that bijective linear maps
%satisfying (\ref{g-pp-pre}) or (\ref{g-te-pre})
preserving parallel pairs or TEA pairs
must be  multiples of isometries for $|\cdot|_1$ and $|\cdot|_2$.

One may also consider problems related to the parallel pairs,
or TEA pairs.
For instance,  the Daugavet equation for operators $A$ in $B(H)$ is
\begin{equation}\label{D-eq}
\|A + I\| = \|A\| + 1,
\end{equation}
which is a well studied problem in function and operator theory
(see, e.g., \cite{A91,AA91, Werner97}).
One may consider linear maps of $B(H)$ sending the set of $A$ satisfying
(\ref{D-eq}) into or onto itself.

\bigskip
\noindent
\textbf{Acknowledgment}

Li is an affiliate member of the Institute for Quantum Computing, University of Waterloo; his research was partially supported by the Simons Foundation Grant 851334.
M.-C. Tsai, Y.-S. Wang and N.-C. Wong are supported by Taiwan NSTC grants 113-2115-M-027-003,
113-2115-M-005-008-MY2  and 112-2115-M-110-006-MY2,
respectively.

\end{document}